\newtheorem{theorem}{Theorem}
\newtheorem{definition}{Definition}
\newtheorem{lemma}{Lemma}
\newtheorem{remark}{Remark}                             \newtheorem{corollary}{Corollary}
\newtheorem{example}{Example}
\title{Some fundamental properties on the sampling free\\ nabla Laplace transform}
\author{Yiheng Wei, Yuquan Chen, Yong Wang
    \affiliation{
	Lab of Vibration Control and Vehicle Control\\
	Department of Automation\\
	University of Science and Technology of China\\
	Hefei, Anhui 230026, China\\
    }	
}
\author{YangQuan Chen*
    \affiliation{
	Lab of Mechatronics, Embedded Systems and Automation\\
	School of Engineering\\
    University of California, Merced\\
    5200 North Lake Road, Merced, CA 95343, USA\\
    Email: ychen53@ucmerced.edu
    }	
}
\begin{document}

\maketitle

\begin{abstract}
{\it Discrete fractional order systems have attracted more and more attention in recent years. Nabla Laplace transform is an important tool to deal with the problem of nabla discrete fractional order systems, but there is still much room for its development. In this paper, 14 lemmas are listed to conclude the existing properties and 14 theorems are developed to describe the innovative features. On one hand, these properties make the $N$-transform more effective and efficient. On the other hand, they enrich the discrete fractional order system theory.}
\end{abstract}

%

\section{INTRODUCTION}
Fractional calculus has gained importance during the past three decades due to its applicability in diverse fields of science and engineering \cite{Monje:2010Book,Ma:2016JCND,Ma:2018JCND,Sun:2018CNSNS,Ma:2019F}. Especially, with the rapid development of digital computers, more and more attention has been attracted on the discrete fractional calculus and there has been a continuing growth in the number of research outcomes on this aspects, such as, domain transform \cite{Ortigueira:2015SP,Ortigueira:2016CNSNS}, short memory principle \cite{Wei:2017FCAA,Wei:2019CNSNSa}, adaptive filtering \cite{Cheng:2017ISA,Liu:2019SP}, system identification \cite{Cui:2018ISA,Cheng:2018SP}, and stability analysis \cite{Wei:2018ISA,Baleanu:2017CNSNS}, etc.

The analogous theory for nabla discrete fractional calculus was initiated and properties of the theory of fractional sums and differences were established \cite{Diaz:1974MC,Gray:1988MC}. Recently, a powerful technique, the sampling free nabla Laplace transform, was developed to deal with the discrete fractional calculus in the sense of a backward difference \cite{Atici:2009EJQTDE}. The existence and uniqueness of nabla Laplace transfrom were discussed in \cite{Goodrich:2015Book}. After deriving the linearity and shifting properties, such transform was applied to solve some nabla Caputo fractional difference equations with initial value problems \cite{Mohan:2014CMS}. The inverse nabla Laplace transform was formulated and then with the help of the nabla Laplace transform, an effective numerical implementation method was proposed for nabla discrete fractional order systems \cite{Wei:2019AJC}. By expressing a nabla discrete fractional order system in nabla domain, its infinite dimensional characteristics was demonstrated plainly \cite{Wei:2019CNSNSb}. After exploring and applying some typical properties of this transform, the montonicity and overshoot property of the time-domain response was studied for linear-time-invariant nabla discrete fractional order system \cite{Wei:2019CNSNSc}.

Despite the plentiful achievements, there still exists some room for further investigation. The existing references almost focus on its applications. Compared with the well adopted Laplace transform and $Z$-transform \cite{Lwiki,Zwiki}, the nature properties exploration for nabla Laplace transform itself is far from adequate. The nabla Laplace transform with fully known properties will play a critical role in the analysis and synthesis of nabla discrete fractional order systems. However, how to achieve this step is still an open problem. All of these motivate this study. 

The remainder of this paper is organized as follows. Section \ref{Section 2} introduces some preliminary definitions. Some fundamental properties are investigated systematically in Section \ref{Section 3} where 14 novel properties are formulated by theorems. In Section \ref{Section 4}, several numerical applications are provided by using the considered tool. Finally, Section \ref{Section 5} draws some conclusions.
\section{PRELIMINARIES}\label{Section 2}
In this section, the basic knowledge for the nabla calculus and the $N$-transform will be provided.

\subsection{Nabla Calculus}
\begin{definition} \label{Definition 1}
(\textbf{Nabla difference} \cite{Goodrich:2015Book})
For $x:\mathbb{N}_{a+1-n} \to \mathbb{R}$, its $n$-th nabla difference is defined by
\begin{equation}\label{Eq1}
{\textstyle {\nabla ^n}x\left( k \right) \triangleq \sum\nolimits_{j = 0}^{n} {{{{\left(\hspace{-1pt}{ - 1} \hspace{-1pt}\right)}^j}\left({\begin{smallmatrix}
{ n }\\
j
\end{smallmatrix}}\right)}x\left( {k - j} \right)},}
\end{equation}
where $n\in\mathbb{Z}_+$, $k\in \mathbb{N}_{a+1}$, $a\in\mathbb{R}$, $\mathbb{N}_{a+1}\triangleq\{a+1,a+2,a+3,\cdots\}$, ${\left( {\begin{smallmatrix}
{ p }\\
q
\end{smallmatrix}} \right)} \triangleq \frac{{\Gamma \left( {p+1  } \right)}}{{\Gamma \left( {q+1} \right)\Gamma \left( {p-q+1} \right)}}$ and $\Gamma\left(\cdot\right)$ is the Gamma function.
\end{definition}

\begin{definition} \label{Definition 2}
(\textbf{Nabla sum} \cite{Goodrich:2015Book})
For $x:\mathbb{N}_{a+1} \to \mathbb{R}$, its $n$-th nabla sum is defined by
\begin{equation}\label{Eq2}
{\textstyle { {}_a^{}\nabla_k^{-n} }x\left( k \right)  \triangleq \sum\nolimits_{j = 0}^{k-a-1} {{{\left( { - 1} \right)}^{j}}\big( {\begin{smallmatrix}
{ - n }\\
j
\end{smallmatrix}} \big)x\left( {k - j}\right)}},
\end{equation}
where $n\in\mathbb{Z}_+$, $k\in \mathbb{N}_{a+1}$ and $a\in\mathbb{R}$.
\end{definition}

To avoid the singularity, i.e., $\Gamma \left( { - n} \right) = \infty $, $n \in \mathbb{N}$, an alternative expression $\big( {\begin{smallmatrix}
{ - n}\\
j
\end{smallmatrix}} \big) = {\left( { - 1} \right)^j}\frac{{\Gamma \left( {n + j} \right)}}{{\Gamma \left( {j + 1} \right)\Gamma \left( n \right)}}$ can be obtained with the help of $\Gamma \left( z\right)\Gamma \left( {1 - z} \right) = \frac{\pi }{{\sin \left( {z\pi } \right)}}$.

It can be found that when $n=0$ is given for Definition \ref{Definition 1}, a reasonable result follows immediately
\begin{equation}\label{Eq3}
{\textstyle {\nabla ^0}x\left( k \right) = x\left( {k } \right).}
\end{equation}
Similarly, when the same condition is exerted for Definition \ref{Definition 2}, it leads to a similar formula
\begin{equation}\label{Eq4}
{\textstyle { {}_a^{}\nabla_k^{0} }x\left( k \right) = x\left( {k } \right).}
\end{equation}

Extending the range of $n$ in Definition \ref{Definition 2}, then a generalized sum can be obtained.

\begin{definition} \label{Definition 3}
(\textbf{Nabla fractional sum} \cite{Goodrich:2015Book})
For $x:\mathbb{N}_{a+1} \to \mathbb{R}$, its $\alpha$-th nabla fractional sum is defined by
\begin{equation}\label{Eq5}
{\textstyle { {}_a^{}\nabla_k^{-\alpha} }x\left( k \right)  \triangleq \sum\nolimits_{j = 0}^{k-a-1} {{{\left( { - 1} \right)}^{j}}\big( {\begin{smallmatrix}
{ - \alpha }\\
j
\end{smallmatrix}} \big)x\left( {k - j}\right)}},
\end{equation}
where $\alpha\in\mathbb{R}_+$, $k\in \mathbb{N}_{a+1}$ and $a\in\mathbb{R}$.
\end{definition}

With the well-established calculus, the nabla fractional difference can be presented now.

\begin{definition} \label{Definition 4}
(\textbf{Nabla Caputo fractional difference} \cite{Goodrich:2015Book})
For $x:\mathbb{N}_{a+1-n} \to \mathbb{R}$, its $\alpha$-th nabla Caputo fractional difference is defined by
\begin{equation}\label{Eq6}
{\textstyle {{}_a^{\rm C}\nabla_k ^\alpha }x\left( k \right) \triangleq {{}_a^{}\nabla_k ^{ \alpha-n }}{\nabla ^n}x\left( k \right),}
\end{equation}
where $\alpha\in(n-1,n)$, $n\in\mathbb{Z}_+$, $k\in \mathbb{N}_{a+1}$ and $a\in\mathbb{R}$.
\end{definition}

\begin{definition} \label{Definition 5}
(\textbf{Nabla Riemann--Liouville fractional difference} \cite{Goodrich:2015Book})
For $x:\mathbb{N}_{a+1-n} \to \mathbb{R}$, its $\alpha$-th nabla Riemann--Liouville fractional difference is defined by
\begin{equation}\label{Eq7}
{\textstyle {{}_a^{\rm R}\nabla_k ^\alpha }x\left( k \right) \triangleq {\nabla ^n}{{}_a^{}\nabla_k ^{ \alpha-n }}x\left( k \right),}
\end{equation}
where $\alpha\in(n-1,n)$, $n\in\mathbb{Z}_+$, $k\in \mathbb{N}_{a+1}$ and $a\in\mathbb{R}$.
\end{definition}

\begin{definition} \label{Definition 6}
(\textbf{Nabla Gr\"{u}nwald--Letnikov fractional difference} \cite{Ostalczyk:2015Book})
For $x:\mathbb{N}_{a+1} \to \mathbb{R}$, its $\alpha$-th nabla Gr\"{u}nwald--Letnikov fractional difference is defined by
\begin{equation}\label{Eq8}
{\textstyle {{}_a^{\rm G}\nabla_k ^\alpha }x\left( k \right) \triangleq \sum\nolimits_{j = 0}^{k-a-1} {{{\left( { - 1} \right)}^{j}}\big( {\begin{smallmatrix}
{ \alpha }\\
j
\end{smallmatrix}} \big)x\left( {k - j}\right)}},
\end{equation}
where $\alpha\in(n-1,n)$, $n\in\mathbb{Z}_+$, $k\in \mathbb{N}_{a+1}$ and $a\in\mathbb{R}$.
\end{definition}

Because of the memory problem, ${{}_a^{\rm G}\nabla_k ^n }x\left( k \right)$ is not identical to $\nabla ^n x\left( k \right)$ in Definition \ref{Definition 1}, even though $\alpha=n\in\mathbb{Z}_+$ is adopted in Definition \ref{Definition 6}.

\subsection{Domain Transform}
\begin{definition} \label{Definition 7}
(\textbf{Laplace transform} \cite{Lwiki})
The Laplace transform of a function $x(t)$, defined for all real numbers $t \ge 0$, is the function $X(s)$, which is a unilateral transform defined by
\begin{equation}\label{Eq9}
{\textstyle {{\mathcal L}}\left\{ {x\left( t \right)} \right\} \triangleq \int_0^{ + \infty } {{{\rm{e}}^{ - st}}x\left( t \right){\rm{d}}t} ,}
\end{equation}
where $s$ is a complex number frequency parameter.
\end{definition}

As well known, the Laplace transform is used for the continuous time signal. For the discrete time case, the following $Z$-transform performs well.

\begin{definition} \label{Definition 8}
(\textbf{Bilateral $Z$-transform} \cite{Zwiki})
The bilateral $Z$-transform of a discrete time signal $x(k)$ is the formal power series $X(z)$ defined as
\begin{equation}\label{Eq10}
{\textstyle{} {\mathcal Z} \left\{ {x\left( {k} \right)} \right\} \triangleq \sum\nolimits_{k = -\infty}^{ + \infty } {z^{-k}x\left( {k} \right) } ,}
\end{equation}
where $k$ is an integer and $z$ is, in general, a complex number.
\end{definition}

\begin{definition} \label{Definition 9}
(\textbf{Unilateral $Z$-transform} \cite{Zwiki})
The unilateral $Z$-transform of a discrete time signal $x(k)$, $k\ge0$ is the formal power series $X(z)$ defined as
\begin{equation}\label{Eq11}
{\textstyle{} {\mathcal Z} \left\{ {x\left( {k} \right)} \right\} \triangleq \sum\nolimits_{k = 0}^{ + \infty } { z^{-k}x\left( {k} \right) } ,}
\end{equation}
where $z$ is a complex number.
\end{definition}

\begin{definition} \label{Definition 10}
(\textbf{Sampling based $N$-transform} \cite{Cheng:2011Book})
The $N$-transform of a sampling based time signal $x(t)$, $t\in\mathbb{N}_{a,h}$ is the formal power series $X(s)$ defined as
\begin{equation}\label{Eq12}
{\textstyle {{\mathcal N}_{a,h}}\left\{ {x\left( k \right)} \right\} \triangleq \sum\nolimits_{k = a/h}^{ + \infty } {{{\left( {1 - sh} \right)}^{k - 1}}x\left( {kh} \right)h}, }
\end{equation}
where $s$ is a complex number and $\mathbb{N}_{a,h}\triangleq\{a,a+h,a+2h,\cdots\}$.
\end{definition}

Considering that the Laplace transform and the $Z$-transform cannot deal with the nonzero initial instant problem directly, introducing a free variable $a\in\mathbb{R}$ brings in two new generalized transforms
\begin{equation}\label{Eq13}
{\textstyle {{\mathcal L}_a}\left\{ {x\left( t \right)} \right\} \triangleq \int_a^{ + \infty } {{{\rm{e}}^{ - s\left( {t - a} \right)}}x\left( t \right){\rm{d}}t} ,}
\end{equation}
\begin{equation}\label{Eq14}
{\textstyle{} {\mathcal Z}_a \left\{ {x\left( {k} \right)} \right\} \triangleq \sum\nolimits_{k = 0}^{ + \infty } {z^{-k}x\left( {k+a} \right) } , }
\end{equation}
and their inverse transforms
\begin{equation}\label{Eq15}
{\textstyle {{\mathcal L}_a^{-1}}\left\{ {X\left( s \right)} \right\}  \triangleq \frac{1}{{2\pi {\rm{i}}}}\int_{\beta  - {\rm{i}}\infty }^{\beta  + {\rm{i}}\infty } { {{\rm{e}}^{s\left( {t - a} \right)}}{X\left( s \right)}{\rm{d}}s},}
\end{equation}
\begin{equation}\label{Eq16}
{\textstyle {\mathcal Z}_a^{ - 1}\left\{ {X\left( z \right)} \right\} \triangleq \frac{1}{{2\pi {\rm{i}}}}\oint_c {{z^{k - a - 1}}X\left( z \right){\rm{d}}z} ,}
\end{equation}
where $\beta$ is a real number so that the contour path of integration is in the region of convergence of $X(s)$ and $c$ is a closed curve rotating around the point $(0,{\rm i}0)$ anticlockwise located in the convergent domain of $X(z)$ \cite{Wei:2019ISA,Wei:2019JCND}.

By applying the $Z$-transform for fractional order problem, the binomial expression with infinite terms will emerge. As a result, the sampling based $N$-transform becomes a good choice. Furthermore, due to the fact $\mathop {\lim }\limits_{h \to 0} {\left( {1 - sh} \right)^{1/h}} = {{\rm{e}}^{ - s}}$, when $a=0$, the so-called sampling based $N$-transform degenerates into the classical Laplace transform. For a special case, i.e., the sampling period $h=1$, one obtains the following transform.

\begin{definition} \label{Definition 11}
(\textbf{Sampling free $N$-transform} \cite{Atici:2009EJQTDE})
For $x:\mathbb{N}_{a+1} \to \mathbb{R}$, $a\in\mathbb{R}$, the $N$-transform of $x(k)$ is defined by
\begin{equation}\label{Eq17}
{\textstyle {{\mathcal N}_a}\left\{ {x\left( k \right)} \right\} \triangleq \sum\nolimits_{k = 1}^{ + \infty } {{{\left( {1 - s} \right)}^{k - 1}}x\left( {k + a} \right)} }
\end{equation}
for those values of $s$ such that the infinite series converges.
\end{definition}

Note that the so-called $N$-transform is the abbreviation of the nabla Laplace transform. Without special instructions, the $N$-transform adopted in the subsequent discussion indicates the sampling free $N$-transform.

\begin{definition} \label{Definition 12}
(\textbf{Region of convergence (ROC)})
For $x:\mathbb{N}_{a+1} \to \mathbb{R}$, $a\in\mathbb{R}$, the region of convergence for $X(s)={{\mathcal N}_a}\left\{ {x\left( k \right)} \right\} $ is defined as the set of points in the complex plane for which the infinite series converges.
\begin{equation}\label{Eq18}
{\textstyle {\rm{ROC}} = \left\{ {s:\big| {\sum\nolimits_{k = 1}^{ + \infty } {{{\left( {1 - s} \right)}^{k - 1}}x\left( {k + a} \right)} } \big| <  + \infty } \right\} .}
\end{equation}
\end{definition}
\section{MAIN RESULTS}\label{Section 3}
In this section, the essential properties of the $N$-transform will be reviewed and investigated.
\begin{lemma}\label{Lemma 1}
(\textbf{Existence} \cite{Goodrich:2015Book})
For $x:\mathbb{N}_{a+1} \to \mathbb{R}$, $a\in\mathbb{R}$, if there exist numbers $M > 0$, $r>0$ and $T \in \mathbb{N}_{a+1}$ such that $|f(k)|\le M r^k$ for all $t\in \mathbb{N}_{T}$, then its $N$-transform exists for $|s-1| < \frac{1}{r}$.
\end{lemma}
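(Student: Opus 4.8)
The plan is to reduce the convergence of the defining series to that of a geometric series by a termwise comparison --- the discrete analogue of the classical proof that a function of exponential order admits a Laplace transform on a right half-plane. First I would record that, since $T\in\mathbb{N}_{a+1}$, the number $N_0\triangleq T-a-1$ is a nonnegative integer and the growth hypothesis says exactly that $|x(k+a)|\le Mr^{k+a}$ for every integer $k\ge N_0+1$. I would then split the series of Definition \ref{Definition 11} as
\begin{equation*}
{\textstyle {{\mathcal N}_a}\left\{ {x\left( k \right)} \right\}=\sum\nolimits_{k=1}^{N_0}{{{\left( {1 - s} \right)}^{k - 1}}x\left( {k + a} \right)}+\sum\nolimits_{k=N_0+1}^{+\infty}{{{\left( {1 - s} \right)}^{k - 1}}x\left( {k + a} \right)},}
\end{equation*}
observing that the head sum has only finitely many terms, hence is a well-defined complex number for every $s$ and is irrelevant to convergence.

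Next I would estimate the tail termwise: for each integer $k\ge N_0+1$,
\begin{equation*}
{\textstyle \big|{{{\left( {1 - s} \right)}^{k - 1}}x\left( {k + a} \right)}\big|\le Mr^{k+a}\left|1-s\right|^{k-1}=Mr^{a+1}\big(r\left|1-s\right|\big)^{k-1}.}
\end{equation*}
Thus, in absolute value, the tail is dominated by $Mr^{a+1}\sum\nolimits_{k\ge N_0+1}\big(r|1-s|\big)^{k-1}$, a geometric series that converges precisely when its common ratio satisfies $r|1-s|<1$, that is, when $|s-1|<\frac1r$. The comparison test then yields absolute convergence of the tail throughout the open disc $|s-1|<\frac1r$; combining this with the finiteness of the head sum shows that the series defining ${{\mathcal N}_a}\left\{ {x\left( k \right)} \right\}$ converges there. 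Since the existence of the $N$-transform means exactly the convergence of this series, the claim follows.

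I do not anticipate a genuine obstacle: the argument is short and standard. The only care required is bookkeeping --- correctly passing from a hypothesis stated on $\mathbb{N}_T$ to the starting index $N_0+1$ of the tail (noting that $a$ need not be an integer, although $k$ and $T-a$ are positive integers), and being explicit that absolute convergence implies convergence so that the splitting and rearrangement are legitimate. As a free byproduct, the same bound is uniform on compact subsets of $\{s:|s-1|<\frac1r\}$, so that $X(s)={{\mathcal N}_a}\left\{ {x\left( k \right)} \right\}$ is in fact analytic there; this is not needed for the statement but is worth noting.
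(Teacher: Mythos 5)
Your argument is correct, and it is the standard one: the paper itself states this lemma without proof (citing Goodrich--Peterson), and the proof in that reference is precisely your comparison of the tail $\sum_{k\ge N_0+1}\left|(1-s)^{k-1}x(k+a)\right|\le Mr^{a+1}\sum_{k\ge N_0+1}\bigl(r|s-1|\bigr)^{k-1}$ with a geometric series converging for $|s-1|<\frac{1}{r}$, after discarding the finite head sum. Your bookkeeping (that $T-a-1$ is a nonnegative integer even though $a$ need not be an integer) and the remark on absolute convergence are exactly the care the standard proof requires, so there is nothing to add.
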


\begin{lemma}\label{Lemma 2}
(\textbf{Uniqueness} \cite{Goodrich:2015Book})
For $x,y:\mathbb{N}_{a+1} \to \mathbb{R}$, $a\in\mathbb{R}$, $x(k)=y(k)$, $k\in\mathbb{N}_{a+1}$, if and only if there exist a constant $r>0$ satisfying ${\mathcal N}_a\{x(k)\}={\mathcal N}_a\{y(k)\}$ for $|s-1|<r$.
\end{lemma}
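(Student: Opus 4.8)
The plan is to pass from the $N$-transform to an ordinary power series. Under the substitution $w=1-s$, Definition \ref{Definition 11} reads ${\mathcal N}_a\{x(k)\}=\sum_{k=1}^{+\infty}(1-s)^{k-1}x(k+a)=\sum_{m=0}^{+\infty}w^{m}\,x(m+1+a)$, i.e. a genuine power series in $w$ whose $m$-th coefficient is the sample $x(m+1+a)$, and the region $|s-1|<r$ becomes the disk $|w|<r$. So the whole statement amounts to the classical fact that a convergent power series is determined by its coefficients and conversely.

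For necessity ($\Rightarrow$): if $x(k)=y(k)$ for every $k\in\mathbb{N}_{a+1}$, then the two defining series are termwise identical, hence have the same region of convergence and the same sum there. By Lemma \ref{Lemma 1} (applied to the growth bound on $x$, equivalently $y$) this common ROC contains a disk $|s-1|<r$ with $r>0$ (one may take $r=1/r_0$, $r_0$ the growth rate), which supplies the required constant. For sufficiency ($\Leftarrow$): assume ${\mathcal N}_a\{x(k)\}={\mathcal N}_a\{y(k)\}$ on $|s-1|<r$, and set $z(k):=x(k)-y(k)$. By linearity of the $N$-transform, ${\mathcal N}_a\{z(k)\}=0$ on that disk, i.e. $\sum_{m=0}^{+\infty}w^{m}z(m+1+a)=0$ for all $|w|<r$. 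Evaluating at $w=0$ gives $z(1+a)=0$; the residual series $\sum_{m=1}^{+\infty}w^{m-1}z(m+1+a)$ is again a power series convergent for $|w|<r$ and vanishes for $w\neq0$, hence (by continuity at $0$) $z(2+a)=0$; iterating — or, equivalently, differentiating repeatedly at $w=0$, or invoking the identity theorem for analytic functions — yields $z(m+1+a)=0$ for all $m\ge0$, that is $x(k)=y(k)$ for all $k\in\mathbb{N}_{a+1}$.

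The only substantive step is the sufficiency direction, and it rests entirely on the coefficient-uniqueness property of power series; the care needed there is to justify that the termwise operations (splitting off the $w=0$ term, dividing by $w$, passing to the limit $w\to0$) are legitimate, which they are because each residual series inherits convergence on $|w|<r$. I would also flag that the necessity direction tacitly uses the existence hypothesis behind Lemma \ref{Lemma 1}, without which the common ROC could collapse to the single point $s=1$ and no disk $|s-1|<r$ would exist.
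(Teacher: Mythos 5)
Your proof is correct, and in fact the paper offers nothing to compare it with: Lemma \ref{Lemma 2} is imported from \cite{Goodrich:2015Book} and stated without proof, so yours is a self-contained argument where the paper has only a citation. The route you take is the natural (and standard) one: the substitution $w=1-s$ turns ${\mathcal N}_a\{x(k)\}$ into the power series $\sum_{m=0}^{+\infty}w^{m}x(m+1+a)$, after which the equivalence is exactly coefficient uniqueness for convergent power series. Your sufficiency argument is sound --- setting $z=x-y$, using linearity to get a vanishing transform on $|w|<r$, and extracting the coefficients one at a time (or in one stroke via the identity theorem, since a power series defines an analytic function on its disk of convergence whose Taylor coefficients at $w=0$ are the samples $z(m+1+a)$) is airtight; the termwise manipulations you worry about are indeed justified because dividing $\sum_{m\ge 1}c_m w^m$ by $w$ preserves convergence on the same disk.

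Your closing caveat is also well taken and worth keeping: as stated, the lemma quietly presupposes that the two transforms actually converge on some disk $|s-1|<r$, which is not automatic without an exponential-boundedness hypothesis of the kind appearing in Lemma \ref{Lemma 1}. In the source \cite{Goodrich:2015Book} the uniqueness theorem is stated for functions of exponential order precisely so that such a disk exists; the paper's phrasing drops that hypothesis, and your necessity direction correctly identifies it as the missing ingredient rather than papering over it. The only cosmetic point: you do not need the full strength of Lemma \ref{Lemma 1} for necessity --- any hypothesis guaranteeing a common nonempty disk of convergence suffices, since once $x(k)=y(k)$ pointwise the two series are literally the same object.
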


\begin{theorem}\label{Theorem 1}
(\textbf{Linearity})
For $x,y:\mathbb{N}_{a+1} \to \mathbb{R}$, $a\in\mathbb{R}$, if $X(s)={{\mathcal N}_a}\left\{ {x(k)} \right\}$ and $Y(s)={{\mathcal N}_a}\left\{ {y(k)} \right\}$ converge for $|s-1| < r_1$ and $|s-1| < r_2$ respectively, where $r_1,r_2 > 0$, then for any $c,d\in\mathbb{R}$, one has ${{\mathcal N}_a}\left\{ {cx(k) + dy(k)} \right\} = cX(s) + dY(s)$ with $|s-1| <\min\{ r_1,r_2\}$.
\end{theorem}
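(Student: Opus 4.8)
The plan is to prove the linearity property directly from the series definition of the $N$-transform given in Definition~\ref{Definition 11}. First I would write out the left-hand side using \eqref{Eq17}, obtaining ${{\mathcal N}_a}\left\{ {cx(k) + dy(k)} \right\} = \sum\nolimits_{k = 1}^{ + \infty } {{{\left( {1 - s} \right)}^{k - 1}}\left[ {cx\left( {k + a} \right) + dy\left( {k + a} \right)} \right]}$. The goal is to split this single series into the sum $c\sum\nolimits_{k = 1}^{ + \infty } {{{\left( {1 - s} \right)}^{k - 1}}x\left( {k + a} \right)} + d\sum\nolimits_{k = 1}^{ + \infty } {{{\left( {1 - s} \right)}^{k - 1}}y\left( {k + a} \right)} = cX(s) + dY(s)$, which is exactly the claimed identity.

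The key justification is that term-by-term rearrangement and splitting of an infinite series into a linear combination of two infinite series is valid precisely when both component series converge (indeed, absolute convergence is what the ROC in Definition~\ref{Definition 12} and the bound in Lemma~\ref{Lemma 1} secure). So the main steps are: (i) observe that by hypothesis $X(s)$ converges for $|s-1| < r_1$ and $Y(s)$ converges for $|s-1| < r_2$; (ii) restrict attention to the common region $|s-1| < \min\{r_1, r_2\}$, where both series converge simultaneously; (iii) on this region, invoke the standard algebra-of-limits result for convergent series — if $\sum u_k$ and $\sum v_k$ converge, then $\sum (c u_k + d v_k)$ converges to $c\sum u_k + d\sum v_k$ — applied to the partial sums $\sum\nolimits_{k=1}^{N} {{\left( {1 - s} \right)}^{k-1}} x(k+a)$ and $\sum\nolimits_{k=1}^{N} {{\left( {1 - s} \right)}^{k-1}} y(k+a)$ and then let $N \to \infty$.

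There is essentially no hard obstacle here: the result is a routine consequence of the linearity of finite sums combined with the linearity of limits. The only point requiring a word of care is the domain bookkeeping — one must note that the resulting identity is asserted only on the intersection of the two regions of convergence, since outside $|s-1| < \min\{r_1, r_2\}$ one of the two series on the right-hand side may fail to converge even if a fortuitous cancellation makes the left-hand side converge. I would therefore state explicitly that for $|s-1| < \min\{r_1, r_2\}$ all three series in question converge, so that the manipulation of partial sums passes to the limit without difficulty, completing the proof.
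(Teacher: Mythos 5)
Your proposal is correct and follows essentially the same route as the paper: apply Definition~\ref{Definition 11} to $cx(k)+dy(k)$ and split the series into $cX(s)+dY(s)$. The only difference is that you make explicit the convergence bookkeeping on $|s-1|<\min\{r_1,r_2\}$ that the paper leaves implicit, which is a welcome but minor refinement.
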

\begin{proof} By using Definition \ref{Definition 11}, it follows
\begin{equation}\label{Eq19}
{\textstyle \begin{array}{l}
{{\mathcal N}_a}\left\{ {cx(k) + dy(k)} \right\}\\
 = \sum\nolimits_{k = 1}^{ + \infty } {{{\left( {1 - s} \right)}^{k - 1}}\left[ {cx(k + a) + dy(k + a)} \right]} \\
 = c\sum\nolimits_{k = 1}^{ + \infty } {{{\left( {1 - s} \right)}^{k - 1}}x\left( {k + a} \right)}  + d\sum\nolimits_{k = 1}^{ + \infty } {{{\left( {1 - s} \right)}^{k - 1}}y\left( {k + a} \right)} \\
 = cX(s) + dY(s).
\end{array}}
\end{equation}
\end{proof}

\begin{theorem}\label{Theorem 2}
(\textbf{Time advance})
For $x:\mathbb{N}_{a+1} \to \mathbb{R}$, $a\in\mathbb{R}$, if $X(s)={{\mathcal N}_a}\left\{ {x(k)} \right\}$ converges for $|s-1| < r$ for some $r > 0$, then for any $m\in\mathbb{N}$, one has ${{\mathcal N}_a}\left\{ {x(k{\rm{ + }}m)} \right\} = {\left( {1 - s} \right)^{ - m}}$ $\big[ {X\left( s \right) - \sum\nolimits_{j = 1}^m {{{\left( {1 - s} \right)}^{j - 1}}x\left( {j + a} \right)} } \big]$ with $|s-1| < r$.
\end{theorem}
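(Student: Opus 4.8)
The plan is to work directly from the series definition in Definition~\ref{Definition 11} and perform a single index shift. Writing out ${{\mathcal N}_a}\left\{ {x(k+m)} \right\} = \sum_{k=1}^{+\infty} (1-s)^{k-1} x(k+m+a)$, I would substitute $\ell = k+m$, so that the summation index runs over $\ell \in \mathbb{N}_{m+1}$ and the exponent $k-1$ becomes $\ell - m - 1$. This turns the sum into $\sum_{\ell = m+1}^{+\infty} (1-s)^{\ell - m - 1} x(\ell + a)$, and factoring out the constant $(1-s)^{-m}$ (which is legitimate precisely because $s \neq 1$, consistent with any neighbourhood $|s-1| < r$ punctured appropriately, or simply absorbed into the ROC statement) yields $(1-s)^{-m}\sum_{\ell = m+1}^{+\infty} (1-s)^{\ell - 1} x(\ell + a)$.

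Next I would recognise the remaining tail sum as $X(s)$ with its first $m$ terms removed: $\sum_{\ell = m+1}^{+\infty} (1-s)^{\ell-1} x(\ell+a) = \sum_{\ell = 1}^{+\infty} (1-s)^{\ell-1} x(\ell+a) - \sum_{\ell=1}^{m} (1-s)^{\ell-1} x(\ell+a) = X(s) - \sum_{j=1}^{m} (1-s)^{j-1} x(j+a)$, after relabelling $\ell$ as $j$ in the finite sum. Substituting back gives exactly the claimed identity.

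For the convergence claim, I would note that removing the finitely many terms $x(1+a),\dots,x(m+a)$ from the series does not affect convergence, so the modified series converges on the same disc $|s-1| < r$ where $X(s)$ converges; multiplication by the finite quantity $(1-s)^{-m}$ preserves this (on that disc minus the point $s=1$, if one wishes to be pedantic, though $(1-s)^{-m}$ is stated as the prefactor so the formula is understood for $s\neq1$). The only genuine care needed is the bookkeeping of the index shift and the endpoints of the partial sum — there is no analytic subtlety here beyond what Lemma~\ref{Lemma 1} already guarantees, so I do not anticipate a real obstacle; the ``hard part'' is merely presenting the reindexing cleanly so the limits $\ell = m+1$ and the split at $\ell = m$ are unambiguous.
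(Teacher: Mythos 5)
Your proposal is correct and follows essentially the same route as the paper: apply Definition~\ref{Definition 11}, shift the index by $m$, factor out $(1-s)^{-m}$, and split off the first $m$ terms of the series to recover $X(s)$. The only difference is cosmetic (you reindex before factoring, the paper factors first), and your remark about convergence on $|s-1|<r$ matches the paper's implicit treatment.
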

\begin{proof}
The expected result can be obtained as follows
\begin{equation}\label{Eq20}
\begin{array}{l}
{{\cal N}_a}\left\{ {x(k{\rm{ + }}m)} \right\}\\
= \sum\nolimits_{k = 1}^{ + \infty } {{{\left( {1 - s} \right)}^{k - 1}}x\left( {k + m + a} \right)} \\
={\left( {1 - s} \right)^{ - m}}\sum\nolimits_{k = 1}^{ + \infty } {{{\left( {1 - s} \right)}^{k + m - 1}}x\left( {k + m + a} \right)} \\
={\left( {1 - s} \right)^{ - m}}\sum\nolimits_{j = 1 + m}^{ + \infty } {{{\left( {1 - s} \right)}^{j - 1}}x\left( {j + a} \right)} \\
={\left( {1 - s} \right)^{ - m}}\big[ \sum\nolimits_{j = 1}^{ + \infty } {{{\left( {1 - s} \right)}^{j - 1}}x\left( {j + a} \right)} \\
\hspace{56pt}- \sum\nolimits_{j = 1}^m {{{\left( {1 - s} \right)}^{j - 1}}x\left( {j + a} \right)}  \big]\\
={\left( {1 - s} \right)^{ - m}}\big[ {X\left( s \right) - \sum\nolimits_{j = 1}^m {{{\left( {1 - s} \right)}^{j - 1}}x\left( {j + a} \right)} } \big].
\end{array}
\end{equation}
\end{proof}

\begin{theorem}\label{Theorem 3}
(\textbf{Time delay})
For $x:\mathbb{N}_{a+1} \to \mathbb{R}$, $a\in\mathbb{R}$, if $X(s)={{\mathcal N}_a}\left\{ {x(k)} \right\}$ converges for $|s-1| < r$ for some $r > 0$, then for any $m\in\mathbb{N}$, one has ${{\cal N}_a}\left\{ {x(k - m)} \right\} = {\left( {1 - s} \right)^m}X\left( s \right)$ with $|s-1| < r$.
\end{theorem}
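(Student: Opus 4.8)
The plan is to proceed exactly as in the proof of Theorem~\ref{Theorem 2}, only with the summation index shifted in the opposite direction; this case is in fact shorter, since no boundary block has to be carried along, provided one is careful about the finitely many arguments of the delayed signal that fall to the left of the original domain $\mathbb{N}_{a+1}$.

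First I would write ${{\mathcal N}_a}\{x(k-m)\}$ out from Definition~\ref{Definition 11} as $\sum\nolimits_{k=1}^{+\infty}(1-s)^{k-1}x(k-m+a)$, then introduce the new index $j=k-m$, which moves the lower limit from $k=1$ to $j=1-m$ while leaving the upper limit at $+\infty$, and factor out $(1-s)^m$ to get ${{\mathcal N}_a}\{x(k-m)\}=(1-s)^m\sum\nolimits_{j=1-m}^{+\infty}(1-s)^{j-1}x(j+a)$. Next I would split this sum into the block $j\in\{1-m,\dots,0\}$ and the tail $j\ge 1$: the tail is precisely $X(s)$, and the block involves $x$ evaluated at $\{a+1-m,\dots,a\}$, which lies outside $\mathbb{N}_{a+1}$. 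Invoking the standard convention for the time-delay operator --- namely that the delayed signal is taken with zero initial history, so that $x(a)=x(a-1)=\cdots=x(a+1-m)=0$ --- annihilates this block and leaves ${{\mathcal N}_a}\{x(k-m)\}=(1-s)^m X(s)$.

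Finally I would dispatch convergence: the only analytic operation performed is multiplication of the convergent series $X(s)$ by the entire factor $(1-s)^m$, so the region of convergence $|s-1|<r$ is unchanged, in agreement with Lemma~\ref{Lemma 1} and Definition~\ref{Definition 12}. The only genuinely delicate point in the argument is the treatment of that boundary block: one must state explicitly that $x(k-m)$ carries zero history for $k\le a$, for otherwise the residual terms $\sum\nolimits_{j=1-m}^{0}(1-s)^{j-1}x(j+a)$ survive and the clean multiplicative identity would have to be replaced by a formula with correction terms, mirroring the structure that appears in Theorem~\ref{Theorem 2}.
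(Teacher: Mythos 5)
Your proposal is correct and follows essentially the same route as the paper's own proof: expand via Definition~\ref{Definition 11}, substitute $j=k-m$, split off the block $j\in\{1-m,\dots,0\}$, and kill it using the convention $x(k)=0$ for $k\le a$, leaving $(1-s)^m X(s)$. The only difference is cosmetic --- you make the convergence remark and the zero-history convention explicit, which the paper states only in passing.
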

\begin{proof}
Definition \ref{Definition 11} leads to
\begin{equation}\label{Eq21}
\begin{array}{l}
{{\mathcal N}_a}\left\{ {x(k - m)} \right\}\\
= \sum\nolimits_{k = 1}^{ + \infty } {{{\left( {1 - s} \right)}^{k - 1}}x\left( {k - m + a} \right)} \\
={\left( {1 - s} \right)^m}\sum\nolimits_{k = 1}^{ + \infty } {{{\left( {1 - s} \right)}^{k - m - 1}}x\left( {k - m + a} \right)}.
\end{array}
\end{equation}

Defining $j = k - m$, it follows
\begin{equation}\label{Eq22}
\begin{array}{l}
{{\cal N}_a}\left\{ {x(k - m)} \right\} \\
={\left( {1 - s} \right)^m}\sum\nolimits_{j = 1 - m}^{ + \infty } {{{\left( {1 - s} \right)}^{j - 1}}x\left( {j + a} \right)} \\
={\left( {1 - s} \right)^m}\big[ \sum\nolimits_{j = 1}^{ + \infty } {{{\left( {1 - s} \right)}^{j - 1}}x\left( {j + a} \right)}\\
  \hspace{50pt}+ \sum\nolimits_{j = 1 - m}^0 {{{\left( {1 - s} \right)}^{j - 1}}x\left( {j + a} \right)}  \big]\\
={\left( {1 - s} \right)^m}\big[ {X\left( s \right) + \sum\nolimits_{j = 1 - m}^0 {{{\left( {1 - s} \right)}^{j - 1}}x\left( {j + a} \right)} } \big].
\end{array}
\end{equation}

For any $k \le a$, $x\left( k \right) = 0$ and therefore the desired result ${{\cal N}_a}\left\{ {x(k - m)} \right\} = {\left( {1 - s} \right)^m}X\left( s \right)$ follows.
\end{proof}

Theorems \ref{Theorem 1}-\ref{Theorem 3} are similar to Theorem 2.2 and Lemma 2.3 in \cite{Mohan:2014CMS}, while the transforms are different. To be more exact, Theorem 2.2 is a special case of Theorem \ref{Theorem 1} and Lemma 2.3 is the special case of Theorem \ref{Theorem 1} and Theorem \ref{Theorem 2}.

\begin{lemma}\label{Lemma 3}
(\textbf{Right shifting} \cite{Ahrendt:2012CAA})
For $x:\mathbb{N}_{a+1} \to \mathbb{R}$,  $a\in\mathbb{R}$, if $X(s)={{\mathcal N}_a}\left\{ {x(k)} \right\}$ converges for $|s-1| < r$ for some $r > 0$, then for any $m\in\mathbb{N}$, one has ${{\mathcal N}_{a + m}}\left\{ {x\left( k \right)} \right\} = {\left( {1 - s} \right)^{ - m}}X\left( s \right) - \sum\nolimits_{k = 1}^n {{{\left( {1 - s} \right)}^{m - k + 1}}x\left( {k + a} \right)} $ with $|s-1|<r$.
\end{lemma}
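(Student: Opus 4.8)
The plan is to argue directly from Definition \ref{Definition 11}, in the same spirit as the proofs of Theorems \ref{Theorem 2} and \ref{Theorem 3}; indeed, advancing the base point $a\mapsto a+m$ is just the time-advance operation of Theorem \ref{Theorem 2} read off on the transform, because matching Definition \ref{Definition 11} for the two base points gives the formal identity ${\mathcal N}_{a+m}\{x(k)\}={\mathcal N}_a\{x(k+m)\}$. So one route is simply to invoke Theorem \ref{Theorem 2} with advance parameter $m$; I will instead spell out the self-contained computation.

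First I would write ${\mathcal N}_{a+m}\{x(k)\}=\sum\nolimits_{k=1}^{+\infty}(1-s)^{k-1}x(k+a+m)$ straight from the definition, and then perform the index shift $j=k+m$, which recasts the series as $\sum\nolimits_{j=m+1}^{+\infty}(1-s)^{j-m-1}x(j+a)$. Pulling out the factor $(1-s)^{-m}$ leaves $(1-s)^{-m}\sum\nolimits_{j=m+1}^{+\infty}(1-s)^{j-1}x(j+a)$.

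Next I would restore the missing head of the series by writing $\sum\nolimits_{j=m+1}^{+\infty}=\sum\nolimits_{j=1}^{+\infty}-\sum\nolimits_{j=1}^{m}$. The first piece is exactly $X(s)$ by Definition \ref{Definition 11}, and the second is a finite sum; this gives ${\mathcal N}_{a+m}\{x(k)\}=(1-s)^{-m}\big[X(s)-\sum\nolimits_{k=1}^{m}(1-s)^{k-1}x(k+a)\big]$, and distributing the prefactor over the finite sum puts it in the stated form.

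As for the domain: deleting finitely many terms does not change the radius of convergence, so the tail series converges on precisely the same disk $|s-1|<r$ as $X(s)$; the only extra constraint the argument introduces is $s\neq 1$, which is unavoidable given the prefactor $(1-s)^{-m}$. There is no genuine obstacle here — the proof is pure bookkeeping — and the only point demanding care is tracking the summation limits and the exponent correctly when $(1-s)^{-m}$ is pushed inside the finite sum.
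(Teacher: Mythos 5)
Your computation is correct, and it is exactly the index-shift bookkeeping the paper itself uses for the analogous Theorems \ref{Theorem 2} and \ref{Theorem 4}; in fact the paper offers no proof of Lemma \ref{Lemma 3} (it is quoted from \cite{Ahrendt:2012CAA}) and explicitly remarks that Theorem \ref{Theorem 2} and Lemma \ref{Lemma 3} ``discuss the same problem,'' so your observation that ${\mathcal N}_{a+m}\{x(k)\}={\mathcal N}_a\{x(k+m)\}$ followed by Theorem \ref{Theorem 2} is precisely the intended route, and your self-contained version of it is sound.

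One point you should not gloss over: distributing the prefactor gives ${\mathcal N}_{a+m}\{x(k)\}=(1-s)^{-m}X(s)-\sum\nolimits_{k=1}^{m}(1-s)^{k-m-1}x(k+a)$, i.e.\ exponent $k-m-1$ and upper limit $m$, whereas the lemma as printed has exponent $m-k+1$ and upper limit $n$. These are not the same; the printed statement evidently carries two typos (the exponent should be $-(m-k+1)=k-m-1$ and $n$ should be $m$), as is confirmed by consistency with Theorem \ref{Theorem 2} at, say, $m=1$. So your closing claim that ``distributing the prefactor over the finite sum puts it in the stated form'' is literally false as the statement stands; you should instead record your final formula and note that it agrees with the lemma only after correcting the sign of the exponent and the summation limit. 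Your remarks on the domain are fine: discarding finitely many terms does not alter the disc of convergence $|s-1|<r$, and the apparent singularity at $s=1$ introduced by $(1-s)^{-m}$ cancels against the subtracted finite sum, so it causes no real restriction.
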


\begin{theorem}\label{Theorem 4}
(\textbf{Left shifting})
For $x:\mathbb{N}_{a+1} \to \mathbb{R}$,  $a\in\mathbb{R}$, if $X(s)={{\mathcal N}_a}\left\{ {x(k)} \right\}$ converges for $|s-1| < r$ for some $r > 0$, then for any $m\in\mathbb{N}$, one has ${{\mathcal N}_{a - m}}\left\{ {x\left( k \right)} \right\} = {\left( {1 - s} \right)^{  m}}X\left( s \right)$ with $|s-1|<r$.
\end{theorem}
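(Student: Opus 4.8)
The plan is to reduce this statement to the computation already carried out for Theorem~\ref{Theorem 3}. First I would record the elementary identity
\[
{{\mathcal N}_{a-m}}\left\{x(k)\right\}=\sum\nolimits_{k=1}^{+\infty}{{\left(1-s\right)}^{k-1}}x\left(k+a-m\right)={{\mathcal N}_{a}}\left\{x(k-m)\right\},
\]
which says that lowering the base point $a$ by $m$ is exactly the same operation as delaying the argument of $x$ by $m$ about the base point $a$. Given this, the conclusion ${{\mathcal N}_{a-m}}\left\{x(k)\right\}={\left(1-s\right)}^{m}X(s)$ with region of convergence $|s-1|<r$ follows immediately from Theorem~\ref{Theorem 3}.

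For completeness I would also supply the self-contained derivation, which mirrors the proof of Theorem~\ref{Theorem 3}. Starting from Definition~\ref{Definition 11}, I would factor out ${\left(1-s\right)}^{m}$ and re-index by $j=k-m$ to obtain
\[
{{\mathcal N}_{a-m}}\left\{x(k)\right\}={\left(1-s\right)}^{m}\sum\nolimits_{j=1-m}^{+\infty}{{\left(1-s\right)}^{j-1}}x\left(j+a\right)={\left(1-s\right)}^{m}\Big[X(s)+\sum\nolimits_{j=1-m}^{0}{{\left(1-s\right)}^{j-1}}x\left(j+a\right)\Big].
\]
The correction sum ranges over $j\in\{1-m,\dots,0\}$, for which $j+a\le a$; since $x$ is defined only on $\mathbb{N}_{a+1}$, the convention $x(k)=0$ for $k\le a$ makes every such term vanish, leaving ${{\mathcal N}_{a-m}}\left\{x(k)\right\}={\left(1-s\right)}^{m}X(s)$.

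As for the ROC, the multiplier ${\left(1-s\right)}^{m}$ is a polynomial and hence finite on the whole plane, while the re-indexed tail $\sum\nolimits_{j=1}^{+\infty}{\left(1-s\right)}^{j-1}x(j+a)$ is literally $X(s)$, so the new series converges precisely on $|s-1|<r$. There is essentially no analytic obstacle here; the only point requiring care — and the step most easily overlooked — is the justification that the finitely many pre-initial terms are genuinely zero, which is exactly where the domain convention $x(k)=0$, $k\le a$, is used, just as at the end of the proof of Theorem~\ref{Theorem 3}.
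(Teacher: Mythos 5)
Your self-contained derivation is exactly the paper's proof: apply Definition \ref{Definition 11}, re-index with $j=k-m$ to pull out $(1-s)^m$, split off the finitely many terms with $j\le 0$, and kill them with the convention $x(k)=0$ for $k\le a$. Your preliminary reduction to Theorem \ref{Theorem 3} via ${\mathcal N}_{a-m}\{x(k)\}={\mathcal N}_a\{x(k-m)\}$ is a valid shortcut and matches the paper's own remark that Theorems \ref{Theorem 3} and \ref{Theorem 4} reflect the same property.
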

\begin{proof}
According to Definition \ref{Definition 11}, it follows
\begin{equation}\label{Eq23}
\begin{array}{l}
{{\mathcal N}_{a - m}}\left\{ {x\left( k \right)} \right\}\\
 = \sum\nolimits_{k = 1}^{ + \infty } {{{\left( {1 - s} \right)}^{k + 1}}x\left( {k + a - m} \right)} \\
 = {\left( {1 - s} \right)^m}\sum\nolimits_{j = 1 - m}^{ + \infty } {{{\left( {1 - s} \right)}^{j + 1}}x\left( {j + a} \right)} \\
 = {\left( {1 - s} \right)^m}\sum\nolimits_{j = 1}^{ + \infty } {{{\left( {1 - s} \right)}^{j + 1}}x\left( {j + a} \right)} \\
 \hspace{10pt}+ {\left( {1 - s} \right)^m}\sum\nolimits_{j = 1 - m}^0 {{{\left( {1 - s} \right)}^{j + 1}}x\left( {j + a} \right)} \\
 ={\left( {1 - s} \right)^m}\big[ {X\left( s \right) + \sum\nolimits_{j = 1 - m}^0 {{{\left( {1 - s} \right)}^{j - 1}}x\left( {j + a} \right)} } \big].
\end{array}
\end{equation}

For any $k \le a$, $x\left( k \right) = 0$ and therefore the desired result ${{\mathcal N}_{a - m}}\left\{ {x\left( k \right)} \right\} = {\left( {1 - s} \right)^{  m}}X\left( s \right)$ follows.
\end{proof}

In fact, Theorem \ref{Theorem 2} and Lemma \ref{Lemma 3} discuss the same problem. Theorem \ref{Theorem 3} and Theorem \ref{Theorem 4} reflect the same property.

\begin{lemma} \label{Lemma 4}
(\textbf{Generalized rising function} \cite{Goodrich:2015Book})
For any $\alpha\in\mathbb{C}$ and $\alpha\notin\mathbb{Z}_-$, one has ${{\mathcal N}_a}\left\{ {\left( {k - a} \right)\overline {^\alpha } } \right\} = \frac{{\Gamma \left( {\alpha  + 1} \right)}}{{{s^{\alpha  + 1}}}}$ for $|s-1|<1$, where the rising function is defined by $p\overline {^q}=\frac{\Gamma(p+q)}{\Gamma(p)}$.
\end{lemma}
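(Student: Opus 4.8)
The plan is to collapse the defining series of the $N$-transform into closed form by recognizing it as a generalized binomial series. First I would substitute $x(k)=(k-a)\overline{^\alpha}=\Gamma(k-a+\alpha)/\Gamma(k-a)$ into Definition \ref{Definition 11}. Since ${\mathcal N}_a$ evaluates the summand at $k+a$, the argument shift cancels the parameter $a$ completely, leaving
\begin{equation}\label{Eq24}
{\textstyle {{\mathcal N}_a}\left\{ {\left( {k - a} \right)\overline {^\alpha } } \right\} = \sum\nolimits_{k = 1}^{ + \infty } {{{\left( {1 - s} \right)}^{k - 1}}\frac{{\Gamma \left( {k + \alpha } \right)}}{{\Gamma \left( k \right)}}} .}
\end{equation}
Reindexing by $j=k-1$ rewrites the right-hand side as $\sum\nolimits_{j=0}^{+\infty}\frac{\Gamma(j+1+\alpha)}{j!}(1-s)^j$, and I would note here that each coefficient is finite precisely because $\alpha\notin\mathbb{Z}_-$ keeps $k+\alpha$ off the non-positive integers for every $k\ge 1$.

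Next I would invoke the generalized binomial (Maclaurin) expansion ${\left( {1 - w} \right)^{ - \beta }} = \sum\nolimits_{j = 0}^{ + \infty } {\frac{{\Gamma \left( {\beta  + j} \right)}}{{\Gamma \left( \beta  \right)j!}}{w^j}}$, valid for all $\beta\in\mathbb{C}$ with $|w|<1$. Taking $\beta=\alpha+1$ — legitimate since $\alpha\notin\mathbb{Z}_-$ makes $\Gamma(\alpha+1)$ finite and nonzero — and $w=1-s$ yields $\sum\nolimits_{j=0}^{+\infty}\frac{\Gamma(j+1+\alpha)}{j!}(1-s)^j=\Gamma(\alpha+1)\,{\left(1-(1-s)\right)^{-(\alpha+1)}}=\Gamma(\alpha+1)\,s^{-(\alpha+1)}$, which is exactly the asserted identity $\frac{\Gamma(\alpha+1)}{s^{\alpha+1}}$.

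It then remains to pin down the region of convergence. I would use the classical asymptotic $\Gamma(k+\alpha)/\Gamma(k)\sim k^{\alpha}$ as $k\to+\infty$ (a consequence of Stirling's formula), so the coefficients in \eqref{Eq24} grow only polynomially in $k$; by the root test the power series in $(1-s)$ has radius of convergence exactly $1$, giving the claimed domain $|s-1|<1$ and matching Lemma \ref{Lemma 1} with $r$ taken arbitrarily close to $1$.

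The effort here is bookkeeping rather than depth: the one thing to watch is that the $a$-shift built into ${\mathcal N}_a$ is applied correctly so the $a$-dependence truly drops out, and that the binomial series is used in the right direction and with a possibly complex exponent. I would also record the degenerate reading: even when $\alpha$ is a non-negative integer, $\Gamma(k+\alpha)/\Gamma(k)$ is a nonconstant polynomial in $k$, so \eqref{Eq24} is still a genuine infinite series with no truncation, and the formula $\Gamma(\alpha+1)/s^{\alpha+1}$ covers that case as well.
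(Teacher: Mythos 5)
Your proof is correct. The paper does not prove this lemma itself --- it is quoted from the cited Goodrich--Peterson reference --- so there is no internal argument to compare against; your route (substituting $x(k+a)=k\overline{^\alpha}=\Gamma(k+\alpha)/\Gamma(k)$ so the $a$-dependence cancels, reindexing, and summing the generalized binomial series $\sum_{j\ge 0}\frac{\Gamma(\alpha+1+j)}{\Gamma(\alpha+1)\,j!}w^{j}=(1-w)^{-(\alpha+1)}$ at $w=1-s$, with $\alpha\notin\mathbb{Z}_-$ ensuring every coefficient and $\Gamma(\alpha+1)$ are finite, and the Stirling asymptotic $\Gamma(k+\alpha)/\Gamma(k)\sim k^{\alpha}$ pinning the region $|s-1|<1$) is exactly the standard argument for this transform pair and needs no repair.
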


\begin{lemma}\label{Lemma 5}
(\textbf{Discrete Mittag--Leffler function} \cite{Wei:2019CNSNSc})
For the discrete Mittag--Leffler function defined as
${{\mathcal F}_{\alpha ,\beta }}\left( {\lambda ,k,a} \right) \triangleq$ $  \sum\nolimits_{j = 0}^{+ \infty}  {\frac{{{\lambda ^j}}}{{\Gamma \left( {j\alpha  + \beta } \right)}} \left( k-a\right)^{\overline{j\alpha  + \beta -1}}}$, one has ${\mathcal N}_a\big\{ {{{\mathcal F}_{\alpha ,\beta }}\left( {\lambda ,k,a} \right)} \big\} = \frac{{{s^{\alpha  - \beta }}}}{{{s^\alpha } - \lambda }}$ with $\left|\lambda  \right| < \left| s \right|^\alpha$ and $|s-1|<1$.
\end{lemma}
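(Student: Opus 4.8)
The plan is to apply the $N$-transform to the defining series of $\mathcal{F}_{\alpha,\beta}(\lambda,k,a)$ term by term, collapse each summand with Lemma~\ref{Lemma 4}, and recognize what remains as a geometric series in $\lambda/s^{\alpha}$. \textbf{Step 1.} Write, provisionally,
\[
\mathcal{N}_a\bigl\{\mathcal{F}_{\alpha,\beta}(\lambda,k,a)\bigr\}=\sum_{j=0}^{+\infty}\frac{\lambda^{j}}{\Gamma(j\alpha+\beta)}\,\mathcal{N}_a\bigl\{(k-a)^{\overline{j\alpha+\beta-1}}\bigr\},
\]
postponing the justification of swapping the sum over $j$ with the summation over $k$ hidden in $\mathcal{N}_a$ (Definition~\ref{Definition 11}).

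\textbf{Step 2.} For each $j$, invoke Lemma~\ref{Lemma 4} with rising exponent $q=j\alpha+\beta-1$ (which is not a negative integer for the parameter ranges of interest, so the hypothesis of Lemma~\ref{Lemma 4} is met) to obtain $\mathcal{N}_a\{(k-a)^{\overline{j\alpha+\beta-1}}\}=\Gamma(j\alpha+\beta)/s^{j\alpha+\beta}$, valid for $|s-1|<1$; the $\Gamma$-factors cancel and the $j$-th summand reduces to $\lambda^{j}s^{-(j\alpha+\beta)}$. \textbf{Step 3.} Sum the geometric series:
\[
\sum_{j=0}^{+\infty}\frac{\lambda^{j}}{s^{j\alpha+\beta}}=\frac{1}{s^{\beta}}\sum_{j=0}^{+\infty}\Bigl(\frac{\lambda}{s^{\alpha}}\Bigr)^{j}=\frac{1}{s^{\beta}}\cdot\frac{s^{\alpha}}{s^{\alpha}-\lambda}=\frac{s^{\alpha-\beta}}{s^{\alpha}-\lambda},
\]
where the series converges precisely when $|\lambda/s^{\alpha}|<1$, i.e. $|\lambda|<|s|^{\alpha}$; this, together with $|s-1|<1$ needed in Step~2, is exactly the stated hypothesis, so the two convergence conditions in the lemma are accounted for.

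The step I expect to be the main obstacle is making Step~1 rigorous, i.e. licensing the term-by-term transform. The plan there is a Tonelli-type argument: after the shift $k\mapsto k+a$ in Definition~\ref{Definition 11}, the relevant absolute double series is $\sum_{j\ge0}\sum_{k\ge1}\frac{|\lambda|^{j}}{\Gamma(j\alpha+\beta)}\,|1-s|^{k-1}\,\bigl|k^{\overline{j\alpha+\beta-1}}\bigr|$; carrying the computation underlying Lemma~\ref{Lemma 4} through with absolute values bounds the inner $k$-sum by $C\,|\lambda|^{j}|s|^{-(j\alpha+\beta)}$ on $\{|s-1|<1\}$, and $\sum_{j}|\lambda|^{j}|s|^{-(j\alpha+\beta)}<+\infty$ whenever $|\lambda|<|s|^{\alpha}$, so the double series is absolutely convergent on the claimed region and Fubini for series applies. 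Alternatively, one can first observe that $|\mathcal{F}_{\alpha,\beta}(\lambda,k,a)|$ admits an exponential majorant $Mr^{k}$, so by Lemma~\ref{Lemma 1} the transform exists on a subdisk, and only the interchange must still be justified there. Once Step~1 is secured, Steps~2--3 deliver the closed form, and Lemma~\ref{Lemma 2} confirms that $\mathcal{F}_{\alpha,\beta}(\lambda,k,a)$ is its unique preimage.
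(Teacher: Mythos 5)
The paper itself contains no proof of this statement: Lemma \ref{Lemma 5} is quoted from \cite{Wei:2019CNSNSc} (per Remark \ref{Remark 1}, lemmas collect existing results), so there is no in-paper argument to compare against. Your route --- apply the $N$-transform term by term to the defining series, collapse each term with Lemma \ref{Lemma 4} at rising exponent $q=j\alpha+\beta-1$ so that the Gamma factors cancel, then sum the geometric series in $\lambda/s^{\alpha}$ --- is the standard derivation, and its computational core (your Steps 2--3) is exactly right, including the reading of $|\lambda|<|s|^{\alpha}$ as the convergence condition of the final geometric series.

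The one genuine weak point is the Tonelli bound you sketch for Step 1. Carrying absolute values through the computation behind Lemma \ref{Lemma 4} replaces $1-s$ by $|1-s|$, and for $q=j\alpha+\beta-1$ one gets
\[
\sum\nolimits_{k \ge 1}|1-s|^{k-1}\,k^{\overline{q}}=\frac{\Gamma(j\alpha+\beta)}{\bigl(1-|1-s|\bigr)^{j\alpha+\beta}},
\]
not a bound of the form $C\,\Gamma(j\alpha+\beta)\,|s|^{-(j\alpha+\beta)}$. Since $1-|1-s|\le|s|$, with equality only for real $s\in(0,1]$, the double series is absolutely convergent only on the smaller set $|\lambda|<\bigl(1-|1-s|\bigr)^{\alpha}$, so Fubini as you invoke it establishes the identity there but not on the full stated region $\{|s-1|<1,\ |\lambda|<|s|^{\alpha}\}$. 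To reach the whole region you need one further step, e.g.\ fix $s$ with $|s-1|<1$ and extend analytically in $\lambda$: both sides agree on the disk $|\lambda|<(1-|1-s|)^{\alpha}$ and the closed form is analytic for $|\lambda|<|s|^{\alpha}$, but then you must still argue that ${\mathcal N}_a\{{\mathcal F}_{\alpha,\beta}\}$ itself converges and is analytic on that larger disk, which is precisely what is being glossed (and, in fairness, is stated equally loosely in the cited source). This affects only the justification of the stated range of validity, not the closed form; with that caveat your proposal is sound and is, in substance, the derivation the lemma rests on. Your final appeal to Lemma \ref{Lemma 2} is unnecessary --- uniqueness of the preimage is not part of the claim.
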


Correspondingly, for the continuous Mittag--Leffler function ${{\mathcal E}_{\alpha ,\beta }}\left( {\lambda ,t,a} \right)$ $ \triangleq \sum\nolimits_{j = 0}^{+ \infty}  {\frac{{{\lambda ^j}}}{{\Gamma \left( {j\alpha  + \beta } \right)}} \left( t-a\right)^{{j\alpha }}}$, its Laplace transform satisfies ${\mathcal L}_a\big\{ {(t-a)^{\beta-1}{{\mathcal E}_{\alpha ,\beta }}\left( {\lambda, t,a} \right)} \big\} = \frac{{{s^{\alpha  - \beta }}}}{{{s^\alpha } - \lambda }}$ with $\left|\lambda  \right| < \left| s \right|^\alpha$ \cite{Monje:2010Book}.

\begin{theorem}\label{Theorem 5}
(\textbf{Scaling in the frequency domain})
For $x:\mathbb{N}_{a+1} \to \mathbb{R}$, $a\in\mathbb{R}$, if $X(s)={{\mathcal N}_a}\left\{ {x(k)} \right\}$, then one has ${{\mathcal N}_a}\big\{ {{{\left( {1 + \lambda } \right)}^{a + 1 - k}}x(k)} \big\} = X\big( {\frac{{s + \lambda }}{{1 + \lambda }}} \big)$ for any $\lambda\neq-1$.
\end{theorem}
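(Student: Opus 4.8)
The plan is to evaluate the left-hand side straight from Definition \ref{Definition 11} and recognize the resulting series as $X$ composed with an affine change of the frequency variable. First I would abbreviate $g(k)\triangleq(1+\lambda)^{a+1-k}x(k)$, which is well defined precisely because $\lambda\neq-1$. Substituting $g$ into \eqref{Eq17} and replacing the dummy argument $k$ by $k+a$ inside $g$ (so that the exponent $a+1-(k+a)$ collapses to $1-k$) gives
\begin{equation*}
{{\mathcal N}_a}\{g(k)\}=\sum\nolimits_{k=1}^{+\infty}{{\left(1-s\right)}^{k-1}}{{\left(1+\lambda\right)}^{1-k}}x(k+a).
\end{equation*}

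The key step is purely algebraic: merge the two geometric factors via ${\left(1-s\right)}^{k-1}{\left(1+\lambda\right)}^{1-k}=\big(\tfrac{1-s}{1+\lambda}\big)^{k-1}$, and then observe the single identity $\tfrac{1-s}{1+\lambda}=1-\tfrac{s+\lambda}{1+\lambda}$. Writing $\sigma\triangleq\tfrac{s+\lambda}{1+\lambda}$, the series becomes $\sum\nolimits_{k=1}^{+\infty}{\left(1-\sigma\right)}^{k-1}x(k+a)$, which is exactly ${{\mathcal N}_a}\{x(k)\}$ evaluated at $s=\sigma$, i.e.\ $X\big(\tfrac{s+\lambda}{1+\lambda}\big)$, as claimed.

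I would then add a short remark on convergence: since $\sigma-1=\tfrac{s-1}{1+\lambda}$, if the original series for $X$ converges on $|s-1|<r$ then the transformed one converges on $|s-1|<r|1+\lambda|$, so the identity holds on a genuine region of convergence. I do not anticipate any real difficulty here: the whole argument is a substitution in the defining series together with the one-line identity above. The only points requiring care are keeping track of the exponent $a+1-k$ once $k$ is shifted to $k+a$, and recording that the radius of convergence is rescaled by the factor $|1+\lambda|$.
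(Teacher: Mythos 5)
Your proposal is correct and follows essentially the same route as the paper's proof: substitute the definition, combine $(1-s)^{k-1}(1+\lambda)^{1-k}$ into $\big(1-\tfrac{s+\lambda}{1+\lambda}\big)^{k-1}$, and read off $X\big(\tfrac{s+\lambda}{1+\lambda}\big)$. Your added remark that the region of convergence rescales to $|s-1|<r|1+\lambda|$ is a correct refinement not stated in the paper, but it does not change the argument.
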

\begin{proof}
By applying the basic definition, it obtains
\begin{equation}\label{Eq24}
\begin{array}{l}
{{\mathcal N}_a}\big\{ {{{\left( {1 + \lambda } \right)}^{a + 1 - k}}x(k)} \big\}\\
 = \sum\nolimits_{k = 1}^{ + \infty } {{{\left( {1 - s} \right)}^{k - 1}}{{\left( {1 + \lambda } \right)}^{1 - k}}x\left( {k + a} \right)} \\
 = \sum\nolimits_{k = 1}^{ + \infty } {{{\big( {1 - \frac{{s + \lambda }}{{1 + \lambda }}} \big)}^{k - 1}}x\left( {k + a} \right)} \\
 = X\big( {\frac{{s + \lambda }}{{1 + \lambda }}} \big).
\end{array}
\end{equation}
\end{proof}

Notably, when $\lambda  = 0$, $\frac{{s + \lambda }}{{1 + \lambda }} = s$ and the original case appears. When $\lambda  \to  + \infty$, $\frac{{s + \lambda }}{{1 + \lambda }} \to 1$ and $x\left( {a + 1} \right) = X\left( 1 \right)$. Actually, Theorem \ref{Theorem 5} is implied in Theorem 5 of \cite{Wei:2019CNSNSb}.

\begin{theorem}\label{Theorem 6}
(\textbf{Complex conjugation})
For $x:\mathbb{N}_{a+1} \to \mathbb{R}$, $a\in\mathbb{R}$, if $X(s)={{\mathcal N}_a}\left\{ {x(k)} \right\}$ converges for $|s-1| < r$ for some $r > 0$, then one has ${{\mathcal N}_a}\left\{ {x{^*}(k)} \right\} = X{^*}(s{^*})$ for $|s-1| < r$, where $\zeta^*$ stands for the conjugate transpose of $\zeta$.
\end{theorem}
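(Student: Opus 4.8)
The plan is to unwind Definition~\ref{Definition 11} and then carry the conjugate transpose through the defining series term by term. First I would write, directly from the definition,
\[
{{\mathcal N}_a}\left\{ {x^*(k)} \right\} = \sum\nolimits_{k = 1}^{ + \infty } {{{\left( {1 - s} \right)}^{k - 1}}x^*\left( {k + a} \right)} .
\]
Since $\left( {1 - s} \right)^{k - 1}$ is a scalar with $\overline{\left( {1 - s} \right)^{k - 1}} = \left( {1 - s^*} \right)^{k - 1}$, each summand can be rewritten as $\big[ \left( {1 - s^*} \right)^{k - 1} x(k + a) \big]^*$, using that $(cM)^* = \bar c M^*$ for a scalar $c$ and a matrix $M$.

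Next I would move the (continuous) map $\zeta\mapsto\zeta^*$ outside the summation:
\[
{{\mathcal N}_a}\left\{ {x^*(k)} \right\} = \Big[ \sum\nolimits_{k = 1}^{ + \infty } {{{\left( {1 - s^*} \right)}^{k - 1}}x\left( {k + a} \right)} \Big]^* = \big[ X(s^*) \big]^* = X^*(s^*),
\]
which is exactly the claimed identity. When $x$ is real-valued this specialises to the Schwarz-type reflection $\overline{X(\bar s)} = X(s)$.

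The only subtlety worth spelling out is the bookkeeping on the region of convergence together with the legitimacy of the term-by-term conjugation. For the former, observe that $|s^* - 1| = |\overline{s - 1}| = |s - 1| < r$, so $s^*$ lies in the same disc and $X(s^*)$ is well defined by hypothesis; for the latter, the series $\sum\nolimits_{k = 1}^{+\infty}(1 - s^*)^{k - 1} x(k + a)$ converges absolutely on $|s - 1| < r$ (as in Lemma~\ref{Lemma 1}), which permits exchanging the continuous operation $\zeta\mapsto\zeta^*$ with the infinite sum. I expect this ROC housekeeping --- not any computation --- to be the only point requiring attention; the algebraic heart of the proof is the single-line termwise conjugation above.
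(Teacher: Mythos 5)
Your argument is exactly the paper's proof: rewrite each summand as $\bigl[(1-s^*)^{k-1}x(k+a)\bigr]^*$, pull the conjugation outside the series, and identify the result as $X^*(s^*)$. The extra remark that $|s^*-1|=|s-1|<r$ (so $s^*$ lies in the same disc and the interchange is legitimate) is a small but welcome piece of bookkeeping the paper leaves implicit.
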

\begin{proof}
The primitive operation results in
\begin{equation}\label{Eq25}
\begin{array}{rl}
{{\mathcal N}_a}\left\{ {x{^*}(k)} \right\}& = \sum\nolimits_{k = 1}^{ + \infty } {{{\left( {1 - s} \right)}^{k - 1}}x{^*}\left( {k + a} \right)} \\
 &= \sum\nolimits_{k = 1}^{ + \infty } {[ {{{\left( {1 - s{^*}} \right)}^{k - 1}}x\left( {k + a} \right)} ]{^*}} \\
 &= \big[ {\sum\nolimits_{k = 1}^{ + \infty } {{{\left( {1 - s{^*}} \right)}^{k - 1}}x\left( {k + a} \right)} } \big]{^*}\\
 &= X{^*}(s{^*}).
\end{array}
\end{equation}
\end{proof}

\begin{theorem}\label{Theorem 7}
(\textbf{Real part})
For $x:\mathbb{N}_{a+1} \to \mathbb{R}$,  $a\in\mathbb{R}$, if $X(s)={{\mathcal N}_a}\left\{ {x(k)} \right\}$ converges for $|s-1| < r$ for some $r > 0$, then one has ${{\mathcal N}_a}\left\{ {{\mathop{\rm Re}\nolimits} \left\{ {x(k)} \right\}} \right\} = \frac{1}{2}\left[ {X( s ) + X{^*}( {s{^*}} )} \right]$ for $|s-1| < r$, where ${\rm Re}\{\cdot\}$ represents the real part.
\end{theorem}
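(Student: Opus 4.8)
The plan is to reduce the real-part statement directly to Theorem \ref{Theorem 6} (complex conjugation) together with Theorem \ref{Theorem 1} (linearity). The starting observation is the elementary identity ${\mathop{\rm Re}\nolimits}\{x(k)\} = \tfrac{1}{2}\big[x(k) + x^*(k)\big]$, valid pointwise for each $k\in\mathbb{N}_{a+1}$. Since ${\mathcal N}_a$ acts termwise on the sequence $x(\cdot)$, I would apply it to this decomposition.

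First I would write ${\mathcal N}_a\{{\mathop{\rm Re}\nolimits}\{x(k)\}\} = {\mathcal N}_a\big\{\tfrac{1}{2}x(k) + \tfrac{1}{2}x^*(k)\big\}$. Next I would invoke Theorem \ref{Theorem 1} with $c=d=\tfrac{1}{2}$, $y(k)=x^*(k)$, to split this into $\tfrac{1}{2}{\mathcal N}_a\{x(k)\} + \tfrac{1}{2}{\mathcal N}_a\{x^*(k)\}$; the linearity theorem applies on $|s-1|<r$ because ${\mathcal N}_a\{x(k)\}=X(s)$ converges there and, by Theorem \ref{Theorem 6}, ${\mathcal N}_a\{x^*(k)\}=X^*(s^*)$ also converges on the same disk (conjugation does not change the modulus of the partial sums). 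Finally I would substitute $X(s)$ and $X^*(s^*)$ to arrive at $\tfrac{1}{2}\big[X(s)+X^*(s^*)\big]$, which is the claimed formula on $|s-1|<r$.

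The only point requiring a moment's care — and the closest thing to an obstacle — is justifying that the series for ${\mathcal N}_a\{x^*(k)\}$ converges on exactly the same region $|s-1|<r$, so that the linearity theorem's hypothesis (both transforms converge) is met and the intersection $\min\{r,r\}=r$ gives the stated domain. This is immediate from Theorem \ref{Theorem 6}, whose proof already exhibits the termwise conjugation $(1-s)^{k-1}x^*(k+a) = \big[(1-s^*)^{k-1}x(k+a)\big]^*$, so the partial sums have the same magnitudes as those of $X$ evaluated at $s^*$, and $|s^*-1|=|s-1|<r$. Everything else is a one-line substitution, so I would present the proof compactly as a short display chaining the identity, Theorem \ref{Theorem 1}, and Theorem \ref{Theorem 6}.

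\begin{proof}
Using ${\mathop{\rm Re}\nolimits}\{x(k)\} = \tfrac{1}{2}[x(k)+x^*(k)]$ together with Theorem \ref{Theorem 1} and Theorem \ref{Theorem 6}, one obtains
\begin{equation}\label{Eq26}
\begin{array}{rl}
{{\mathcal N}_a}\left\{ {{\mathop{\rm Re}\nolimits}\left\{ {x(k)} \right\}} \right\}
 &= {{\mathcal N}_a}\big\{ {\tfrac{1}{2}x(k) + \tfrac{1}{2}x^*(k)} \big\}\\[2pt]
 &= \tfrac{1}{2}{{\mathcal N}_a}\left\{ {x(k)} \right\} + \tfrac{1}{2}{{\mathcal N}_a}\left\{ {x^*(k)} \right\}\\[2pt]
 &= \tfrac{1}{2}\left[ {X(s) + X{^*}(s{^*})} \right],
\end{array}
\end{equation}
which holds for $|s-1|<r$, since both ${{\mathcal N}_a}\{x(k)\}$ and ${{\mathcal N}_a}\{x^*(k)\}$ converge there by assumption and Theorem \ref{Theorem 6}.
\end{proof}
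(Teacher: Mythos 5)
Your proposal is correct and follows essentially the same route as the paper: write ${\rm Re}\{x(k)\}=\tfrac{1}{2}[x(k)+x^*(k)]$, then apply the linearity property (Theorem \ref{Theorem 1}) and the complex conjugation property (Theorem \ref{Theorem 6}). Your extra remark justifying that ${\mathcal N}_a\{x^*(k)\}$ converges on the same disk $|s-1|<r$ is a small refinement the paper leaves implicit, but the argument is otherwise identical.
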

\begin{proof}
By adopting the linearity and the complex conjugation property, one has
\begin{equation}\label{Eq26}
\begin{array}{rl}
{{\mathcal N}_a}\left\{ {{\mathop{\rm Re}\nolimits} \left\{ {x(k)} \right\}} \right\}& = {{\mathcal N}_a}\left\{ {\frac{1}{2}\left[ {x\left( k \right) + x{^*}\left( k \right)} \right]} \right\}\\
 &= \frac{1}{2}{{\mathcal N}_a}\left\{ {x\left( k \right)} \right\} + \frac{1}{2}{{\mathcal N}_a}\left\{ {x{^*}\left( k \right)} \right\}\\
 &= \frac{1}{2}\left[ {X( s ) + X{^*}( {s{^*}} )} \right].
\end{array}
\end{equation}
\end{proof}

\begin{theorem}\label{Theorem 8}
(\textbf{Imaginary part})
For $x:\mathbb{N}_{a+1} \to \mathbb{R}$, $a\in\mathbb{R}$, if $X(s)={{\mathcal N}_a}\left\{ {x(k)} \right\}$ converges for $|s-1| < r$ for some $r > 0$, then one has ${{\mathcal N}_a}\left\{ {{\mathop{\rm Im}\nolimits} \left\{ {x(k)} \right\}} \right\} = \frac{1}{{2{\rm{i}}}}\left[ {X( s ) - X{^*}( {s{^*}} )} \right]$ for $|s-1| < r$, where ${\rm Im}\{\cdot\}$ means the imaginary part.
\end{theorem}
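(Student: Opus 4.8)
The plan is to imitate, almost verbatim, the derivation of Theorem~\ref{Theorem 7}, replacing the real-part decomposition by the corresponding imaginary-part one. The starting point is the pointwise algebraic identity
\begin{equation}\label{Eq27}
{\textstyle {\mathop{\rm Im}\nolimits}\left\{ {x\left( k \right)} \right\} = \frac{1}{{2{\rm{i}}}}\left[ {x\left( k \right) - x{^*}\left( k \right)} \right]},
\end{equation}
valid for every $k\in\mathbb{N}_{a+1}$. First I would substitute \eqref{Eq27} into ${{\mathcal N}_a}\{\cdot\}$ and split the series, which is legitimate on the set $|s-1|<r$: the transform of $x(k)$ converges there by hypothesis, and the transform of $x^*(k)$ equals $X^*(s^*)$ and converges on $|s^*-1|<r$ by Theorem~\ref{Theorem 6}; since $|s^*-1| = |s-1|$, both series share the common region $|s-1|<r$.

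The second step is to invoke linearity to pull the constants through the $N$-transform, obtaining
\begin{equation}\label{Eq28}
{\textstyle {{\mathcal N}_a}\left\{ {{\mathop{\rm Im}\nolimits}\left\{ {x\left( k \right)} \right\}} \right\} = \frac{1}{{2{\rm{i}}}}{{\mathcal N}_a}\left\{ {x\left( k \right)} \right\} - \frac{1}{{2{\rm{i}}}}{{\mathcal N}_a}\left\{ {x{^*}\left( k \right)} \right\} = \frac{1}{{2{\rm{i}}}}\left[ {X\left( s \right) - X{^*}\left( {s{^*}} \right)} \right]},
\end{equation}
which is exactly the claimed formula on $|s-1|<r$. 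The one point that deserves a remark is that Theorem~\ref{Theorem 1} is stated for real coefficients $c,d\in\mathbb{R}$, whereas here the coefficient $\tfrac{1}{2{\rm i}}$ is purely imaginary; this is handled either by noting that the proof of Theorem~\ref{Theorem 1} carries over unchanged to complex $c,d$, or by splitting the single absolutely convergent series in Definition~\ref{Definition 11} directly, without passing through Theorem~\ref{Theorem 1}.

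There is no real obstacle here: the argument is a one-line consequence of the linearity (Theorem~\ref{Theorem 1}) and complex conjugation (Theorem~\ref{Theorem 6}) properties already established, exactly parallel to Theorem~\ref{Theorem 7}. If anything needs care, it is only bookkeeping of the factor $\tfrac{1}{2{\rm i}}$ and the sign in \eqref{Eq27}, together with the observation that conjugation preserves the region of convergence so that \eqref{Eq28} is valid on the full disc $|s-1|<r$.
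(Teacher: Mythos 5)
Your proposal is correct and follows essentially the same route as the paper: decompose ${\rm Im}\{x(k)\}=\frac{1}{2{\rm i}}[x(k)-x^*(k)]$ and then apply linearity together with the complex-conjugation property (Theorem~\ref{Theorem 6}), exactly as in the proof of Theorem~\ref{Theorem 7}. Your extra remark that the coefficient $\frac{1}{2{\rm i}}$ is complex (so one either extends Theorem~\ref{Theorem 1} to complex constants or splits the series directly) is a small point of care that the paper's own proof glosses over.
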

\begin{proof}
With the similar operation in (\ref{Eq26}), one has
\begin{equation}\label{Eq27}
\begin{array}{rl}
{{\mathcal N}_a}\left\{ {{\mathop{\rm Im}\nolimits} \left\{ {x(k)} \right\}} \right\}& = {{\mathcal N}_a}\left\{ {\frac{1}{2{\rm i}}\left[ {x\left( k \right) - x{^*}\left( k \right)} \right]} \right\}\\
 &= \frac{1}{2{\rm i}}{{\mathcal N}_a}\left\{ {x\left( k \right)} \right\} - \frac{1}{2{\rm i}}{{\mathcal N}_a}\left\{ {x{^*}\left( k \right)} \right\}\\
 &= \frac{1}{2{\rm i}}\left[ {X( s ) - X{^*}( {s{^*}} )} \right].
\end{array}
\end{equation}
\end{proof}

\begin{theorem}\label{Theorem 9}
(\textbf{Differentiation in the frequency domain})
For $x:\mathbb{N}_{a+1} \to \mathbb{R}$,  $a\in\mathbb{R}$, if $X(s)={{\mathcal N}_a}\left\{ {x(k)} \right\}$, then one has ${{\mathcal N}_a}\left\{ \left( {k - a - 1} \right)x(k) \right\}= - \left( {1 - s} \right)\frac{{{\rm{d}}X(s)}}{{{\rm{d}}s}}$.
\end{theorem}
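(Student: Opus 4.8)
The plan is to differentiate the defining series of $X(s)$ term by term and then recognise the resulting series as the $N$-transform of $(k-a-1)x(k)$. Starting from Definition \ref{Definition 11}, write $X(s)=\sum_{k=1}^{+\infty}(1-s)^{k-1}x(k+a)$, which is a power series in the variable $1-s$. Within the interior of its region of convergence (Definition \ref{Definition 12}, guaranteed nonempty by Lemma \ref{Lemma 1}), a power series may be differentiated term by term, so
\begin{equation}\label{EqPlan1}
\frac{{\rm d}X(s)}{{\rm d}s}=\sum\nolimits_{k=1}^{+\infty}(k-1)(1-s)^{k-2}x(k+a)\cdot(-1).
\end{equation}

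Next I would multiply both sides of \eqref{EqPlan1} by $-(1-s)$, which merely shifts the exponent of $(1-s)$ back up by one and cancels the sign:
\begin{equation}\label{EqPlan2}
-(1-s)\frac{{\rm d}X(s)}{{\rm d}s}=\sum\nolimits_{k=1}^{+\infty}(k-1)(1-s)^{k-1}x(k+a).
\end{equation}
On the other side, applying Definition \ref{Definition 11} directly to the sequence $(k-a-1)x(k)$ gives ${{\mathcal N}_a}\{(k-a-1)x(k)\}=\sum_{k=1}^{+\infty}(1-s)^{k-1}\big[(k+a)-a-1\big]x(k+a)=\sum_{k=1}^{+\infty}(k-1)(1-s)^{k-1}x(k+a)$, which is exactly the right-hand side of \eqref{EqPlan2}. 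Comparing the two expressions yields the claim.

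The only nontrivial point is the justification of term-by-term differentiation in \eqref{EqPlan1}: this follows from the standard fact that a convergent power series is analytic on the open disk of convergence and its derivative is obtained by differentiating termwise, with the differentiated series having the same radius of convergence. Since $X(s)$ converges on a disk $|s-1|<r$ with $r>0$, equation \eqref{EqPlan2} and hence the stated identity hold on that same disk; everything else is routine index bookkeeping. One should also note for completeness that the case $s=1$ is excluded implicitly, consistent with the $(1-s)$ factor, but this lies outside (or on the boundary of) the relevant convergence disk anyway.
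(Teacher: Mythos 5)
Your proposal is correct and follows essentially the same route as the paper's proof: differentiate the defining series term by term, multiply by $-(1-s)$ to restore the exponent $k-1$, and identify the resulting series as ${\mathcal N}_a\{(k-a-1)x(k)\}$. The only difference is cosmetic (the paper factors out $-(1-s)^{-1}$ rather than multiplying through), and your explicit justification of termwise differentiation on the convergence disk is a welcome but minor addition.
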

\begin{proof} From formula (\ref{Eq17}), one has
\begin{equation}\label{Eq28}
\begin{array}{rl}
\frac{{{\rm{d}}X(s)}}{{{\rm{d}}s}}&=\sum\nolimits_{k = 1}^{ + \infty } {\frac{{\rm{d}}}{{{\rm{d}}s}}{{\left( {1 - s} \right)}^{k - 1}}x\left( {k + a} \right)} \\
&= - \sum\nolimits_{k = 1}^{ + \infty } {\left( {k - 1} \right){{\left( {1 - s} \right)}^{k - 2}}x\left( {k + a} \right)} \\
&=  - {\left( {1 - s} \right)^{ - 1}}\sum\nolimits_{k = 1}^{ + \infty } {{{\left( {1 - s} \right)}^{k - 1}}\left( {k - 1} \right)x\left( {k + a} \right)} \\
&= - {\left( {1 - s} \right)^{ - 1}}{{\mathcal N}_a}\left\{ {\left( {k - a - 1} \right)x\left( k \right)} \right\},
\end{array}
\end{equation}
which is equivalent to the target formula.
\end{proof}

Taking this operation repeatedly, then for any $m\in\mathbb{Z}_+$, it follows
${{\mathcal N}_a}\left\{ \left( {k - a - 1} \right)^mx(k) \right\}= {\big[ { - \left( {1 - s} \right)\frac{{\rm{d}}}{{{\rm{d}}s}}} \big]^m}X(s)$.

\begin{theorem}\label{Theorem 10}
(\textbf{Integration in the frequency domain})
For $x:\mathbb{N}_{a+1} \to \mathbb{R}$,  $a\in\mathbb{R}$, if $X(s)={{\mathcal N}_a}\left\{ {x(k)} \right\}$, then for any $m\in\mathbb{N}$, one has ${{\mathcal N}_a}\big\{{\left( {k - a - m - 1} \right)^{ - 1}}x(k) \big\}=  - {\left( {1 - s} \right)^m}\int_1^s {{{\left( {1 - \eta } \right)}^{ - m - 1}}X(\eta ){\rm{d}}\eta } $.
\end{theorem}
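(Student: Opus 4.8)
The plan is to prove the identity by expanding the right-hand side with the defining series of $X$ in (\ref{Eq17}), interchanging summation and integration, evaluating the resulting elementary integral term by term, and reading off the left-hand side. Substituting $X(\eta)=\sum_{k=1}^{+\infty}(1-\eta)^{k-1}x(k+a)$ and taking the series outside the integral turns $-(1-s)^{m}\int_{1}^{s}(1-\eta)^{-m-1}X(\eta)\,{\rm d}\eta$ into $-(1-s)^{m}\sum_{k=1}^{+\infty}x(k+a)\int_{1}^{s}(1-\eta)^{k-m-2}\,{\rm d}\eta$. The inner integral is elementary, $\int_{1}^{s}(1-\eta)^{k-m-2}\,{\rm d}\eta=-\tfrac{(1-s)^{k-m-1}}{k-m-1}$, with the lower endpoint taken to contribute nothing, so after absorbing the prefactor $(1-s)^{m}$ the right-hand side collapses to $\sum_{k=1}^{+\infty}(1-s)^{k-1}\tfrac{x(k+a)}{k-m-1}$. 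Since replacing the running index $k$ by $k+a$ sends $(k-a-m-1)^{-1}$ to $(k-m-1)^{-1}$, this is exactly ${\mathcal N}_{a}\{(k-a-m-1)^{-1}x(k)\}$ by (\ref{Eq17}), completing the argument.

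As a sanity check on the prefactor and the primitive I would differentiate the claimed equation. Writing $Y(s)$ for its left-hand side and $y(k)=(k-a-m-1)^{-1}x(k)$, the splitting $k-a-1=(k-a-m-1)+m$ gives $(k-a-1)y(k)=x(k)+m\,y(k)$, so Theorem~\ref{Theorem 1} and Theorem~\ref{Theorem 9} yield $-(1-s)Y'(s)=X(s)+mY(s)$. Differentiating $-(1-s)^{m}\int_{1}^{s}(1-\eta)^{-m-1}X(\eta)\,{\rm d}\eta$ by the product rule and the fundamental theorem of calculus reproduces the same first-order relation, which confirms that the two sides match as analytic functions on the region of convergence.

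The step I expect to be the real obstacle is analytic rather than algebraic: the term-by-term integration must be justified, and the weight $(1-\eta)^{-m-1}$ is singular exactly at the lower limit $\eta=1$, where the partial sums of $X(\eta)$ converge only locally uniformly. I would handle the interchange by integrating from an interior point $\eta_{0}$ with $|\eta_{0}-1|<1/r$ up to $s$ and then letting $\eta_{0}\to1$; the boundary terms $-(1-\eta_{0})^{k-m-1}/(k-m-1)$ vanish for $k>m+1$, while for the finitely many indices $k\le m+1$ --- for which $(1-\eta)^{k-m-2}$ is not integrable at $\eta=1$ --- the identity has to be read in a finite-part sense (equivalently under a vanishing hypothesis on $x(a+1),\dots,x(a+m+1)$, which also removes the division by zero at $k=m+1$ on the left). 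Controlling those few terms uniformly in $m$ is the only point that needs genuine care; the rest is the routine bookkeeping displayed above.
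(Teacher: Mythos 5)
Your main computation is exactly the paper's proof: substitute the defining series for $X(\eta)$, interchange summation and integration, evaluate $\int_1^s(1-\eta)^{k-m-2}\,{\rm d}\eta$ term by term with the contribution of the lower limit discarded, and reindex to recognize ${\mathcal N}_a\big\{(k-a-m-1)^{-1}x(k)\big\}$. The caveat you raise is legitimate rather than a deviation: the paper's proof silently treats $(1-\eta)^{k-m-1}\big|_{\eta=1}$ as zero even for the indices $k\le m$ where the exponent is nonpositive, and at $k=m+1$ both the antiderivative (a logarithm, not a power) and the factor $(k-a-m-1)^{-1}$ on the left-hand side are ill-defined, so your observation that the identity needs a finite-part reading or the vanishing of $x(a+1),\dots,x(a+m+1)$ is a genuine refinement of the paper's argument, not a different route.
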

\begin{proof} From formula (\ref{Eq17}), it obtains
\begin{equation}\label{Eq29}
\begin{array}{l}
{\left( {1 - s} \right)^m}\int_1^s {{{\left( {1 - \eta } \right)}^{ - m - 1}}X(\eta ){\rm{d}}\eta } \\
 = {\left( {1 - s} \right)^m}\int_1^s {{{\left( {1 - \eta } \right)}^{ - m - 1}}\sum\nolimits_{k = 1}^{ + \infty } {{{\left( {1 - \eta } \right)}^{k - 1}}x\left( {k + a} \right)} {\rm{d}}\eta } \\
 = {\left( {1 - s} \right)^m}\sum\nolimits_{k = 1}^{ + \infty } {\int_1^s {{{\left( {1 - \eta } \right)}^{k - m - 2}}{\rm{d}}\eta } x\left( {k + a} \right)} \\
 =  - {\left( {1 - s} \right)^m}\sum\nolimits_{k = 1}^{ + \infty } { {{{\left( {k - m - 1} \right)}^{ - 1}}{{\left( {1 - \eta } \right)}^{k - m - 1}}} \big|_1^sx\left( {k + a} \right)} \\
 =  - \sum\nolimits_{k = 1}^{ + \infty } {{{\left( {1 - s} \right)}^{k - 1}}{{\left( {k - m - 1} \right)}^{ - 1}}x\left( {k + a} \right)} \\
 =  - {{\mathcal N}_a}\big\{ {{{\left( {k - a - m - 1} \right)}^{ - 1}}x\left( k \right)} \big\}.
\end{array}
\end{equation}
\end{proof}
When $m=0$, ${{\mathcal N}_a}\big\{{\left( {k - a  - 1} \right)^{ - 1}}x(k) \big\}=  - \int_1^s {{{\left( {1 - \eta } \right)}^{  - 1}}X(\eta ){\rm{d}}\eta } $ follows immediately. Furthermore, for both Theorem \ref{Theorem 9} and Theorem \ref{Theorem 10}, the results can be extended to the fractional order case with minor changes.

\begin{theorem}\label{Theorem 11}
(\textbf{Accumulation})
For $x:\mathbb{N}_{a+1} \to \mathbb{R}$,  $a\in\mathbb{R}$, if $X(s)={{\mathcal N}_a}\left\{ {x(k)} \right\}$ converges for $|s-1| < r$ for some $r > 0$, then one has ${{\mathcal N}_a}\big\{\sum\nolimits_{j = a + 1}^k {x(j)}  \big\}= \frac{1}{s}X(s)$ for $|s-1|<\min\{1,r\}$.
\end{theorem}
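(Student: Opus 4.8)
The plan is to introduce $y(k)\triangleq\sum_{j=a+1}^{k}x(j)$ for $k\in\mathbb{N}_{a+1}$, extended by $y(k)=0$ for $k\le a$, and to evaluate ${\mathcal N}_a\{y(k)\}$ straight from Definition \ref{Definition 11}. Substituting into (\ref{Eq17}) and re-indexing the inner sum by $i=j-a$ gives ${\mathcal N}_a\{y(k)\}=\sum_{k=1}^{+\infty}(1-s)^{k-1}\sum_{i=1}^{k}x(i+a)$. The heart of the argument is to swap the order of summation, obtaining $\sum_{i=1}^{+\infty}x(i+a)\sum_{k=i}^{+\infty}(1-s)^{k-1}$, and then to collapse the inner geometric series to $(1-s)^{i-1}/s$, which is valid precisely when $|1-s|<1$. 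What remains is $\frac1s\sum_{i=1}^{+\infty}(1-s)^{i-1}x(i+a)=\frac1sX(s)$, i.e. the claim.

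The step that actually needs justification is the interchange of the two summations. Here I would invoke Tonelli for nonnegative double series: on $|s-1|<\min\{1,r\}$ the majorant $\sum_{k=1}^{+\infty}\sum_{i=1}^{k}|1-s|^{k-1}|x(i+a)|$ equals, after interchange of its nonnegative terms, $\frac{1}{1-|1-s|}\sum_{i=1}^{+\infty}|x(i+a)|\,|1-s|^{i-1}$, and this last series is finite because $\sum_{i=1}^{+\infty}x(i+a)(1-s)^{i-1}$ is a power series in $(1-s)$ that converges on the open disc $|s-1|<r$ by hypothesis (cf. Lemma \ref{Lemma 1}), hence converges absolutely there. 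Absolute convergence of the double series then licenses the rearrangement in the main computation and simultaneously shows that ${\mathcal N}_a\{y(k)\}$ converges on $|s-1|<\min\{1,r\}$, which is the advertised ROC.

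An alternative, essentially bookkeeping-free route is available once one notices that $\nabla y(k)=y(k)-y(k-1)=x(k)$ for every $k\in\mathbb{N}_{a+1}$ (the case $k=a+1$ using $y(a)=0$). Applying the time-delay property (Theorem \ref{Theorem 3}) with $m=1$ to $y$ gives ${\mathcal N}_a\{y(k-1)\}=(1-s)Y(s)$, so by linearity ${\mathcal N}_a\{\nabla y(k)\}=Y(s)-(1-s)Y(s)=sY(s)$; since $\nabla y(k)=x(k)$ this forces $sY(s)=X(s)$, i.e. $Y(s)=\frac1sX(s)$, the factor $\frac1s$ and the restriction $|s-1|<1$ reappearing through the need for $s\neq0$ inside the disc. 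I expect the only genuine obstacle in either route to be the convergence bookkeeping — pinning down that the operative radius is $\min\{1,r\}$ and that the interchange (or the division by $s$) is legitimate there; the algebra itself is routine.
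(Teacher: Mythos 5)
Your main computation is exactly the paper's own proof (its Eq.~(\ref{Eq30})): write out ${\mathcal N}_a$ of the partial sums, interchange the order of summation, and collapse the inner geometric series $\sum_{k\ge i}(1-s)^{k-1}=(1-s)^{i-1}/s$ for $|s-1|<1$, so the proposal is correct and takes essentially the same route. The Tonelli-type justification of the interchange (which the paper silently omits) and the alternative argument via $\nabla y(k)=x(k)$ with Theorem~\ref{Theorem 3} are sound additions but not departures in approach.
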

\begin{proof} According to Definition \ref{Definition 11}, one has
\begin{equation}\label{Eq30}
\begin{array}{l}
\sum\nolimits_{k = 1}^{ + \infty } {{{\left( {1 - s} \right)}^{k - 1}}\sum\nolimits_{j = a + 1}^{k + a} {x(j)} } \\
 = \sum\nolimits_{k = 1}^{ + \infty } {{{\left( {1 - s} \right)}^{k - 1}}\sum\nolimits_{j = 1}^k {x(j + a)} } \\
 = \sum\nolimits_{j = 1}^{ + \infty } {x(j + a)\sum\nolimits_{k = j}^{ + \infty } {{{\left( {1 - s} \right)}^{k - 1}}} } \\
 = \sum\nolimits_{j = 1}^{ + \infty } {x(j + a)\frac{{{{\left( {1 - s} \right)}^{j - 1}}}}{s}} \\
 = \frac{1}{s}X(s).
\end{array}
\end{equation}
\end{proof}

\begin{lemma}\label{Lemma 6}
(\textbf{Convolution} \cite{Goodrich:2015Book})
For $x,y:\mathbb{N}_{a+1} \to \mathbb{R}$, $a\in\mathbb{R}$, if $X(s)={{\mathcal N}_a}\left\{ {x(k)} \right\}$ and $Y(s)={{\mathcal N}_a}\left\{ {y(k)} \right\}$, then one has ${{\mathcal N}_a}\left\{ {x(k)\ast y(k)} \right\} = X(s)Y(s)$, where $\ast$ indicates the convolution operation, i.e., $x(k)\ast y(k)\triangleq\sum\nolimits_{j = a + 1}^k {x\left( {k - j + a + 1} \right)y\left( j \right)} $.
\end{lemma}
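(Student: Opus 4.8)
The plan is to insert the explicit formula for the discrete convolution into Definition \ref{Definition 11} and then decouple the resulting double sum into a product of two single sums. First I would evaluate the convolution at $k+a$ and shift the inner summation index, writing
\[
{{\mathcal N}_a}\left\{ {x(k)\ast y(k)} \right\} = \sum\nolimits_{k = 1}^{ + \infty } {{{\left( {1 - s} \right)}^{k - 1}}\sum\nolimits_{j = 1}^{k} {x\left( {k - j + a + 1} \right)y\left( {j + a} \right)} },
\]
which follows from $x(k)\ast y(k)=\sum\nolimits_{j = a+1}^{k}x(k-j+a+1)y(j)$ after replacing the summation variable $j$ by $j+a$.

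The key step is to reverse the order of summation. For a fixed $j\ge 1$ the outer index $k$ ranges over $\{j,j+1,j+2,\dots\}$, so the expression becomes
\[
\sum\nolimits_{j = 1}^{ + \infty } {y\left( {j + a} \right)\sum\nolimits_{k = j}^{ + \infty } {{{\left( {1 - s} \right)}^{k - 1}}x\left( {k - j + a + 1} \right)} }.
\]
Substituting $m=k-j+1$ in the inner sum and using $(1-s)^{k-1}=(1-s)^{j-1}(1-s)^{m-1}$, I would pull the factor $(1-s)^{j-1}$ out of the inner sum, so that the inner sum equals $\sum\nolimits_{m=1}^{+\infty}(1-s)^{m-1}x(m+a)=X(s)$ while the remaining outer sum equals $\sum\nolimits_{j=1}^{+\infty}(1-s)^{j-1}y(j+a)=Y(s)$; hence ${{\mathcal N}_a}\{x(k)\ast y(k)\}=X(s)Y(s)$.

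The main obstacle is rigor rather than algebra: the interchange of the two infinite summations is legitimate only where the double series converges absolutely. I would therefore restrict attention to the intersection of the two regions of convergence supplied by the hypotheses (see Definition \ref{Definition 12} and Lemma \ref{Lemma 1}); on a suitable disc $|s-1|<r$ contained in both, the estimate $\sum\nolimits_{k}\sum\nolimits_{j}|1-s|^{k-1}|x(k-j+a+1)|\,|y(j+a)|<+\infty$ holds, so Tonelli's theorem for series of nonnegative terms guarantees absolute convergence and Fubini's theorem then permits the rearrangement. Everything else in the argument is routine index bookkeeping.
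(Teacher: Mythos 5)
Your proposal is correct: the paper states Lemma \ref{Lemma 6} without proof (it is quoted from the cited reference), and your argument is exactly the standard one for this result --- insert the convolution into Definition \ref{Definition 11}, interchange the order of summation, and shift the inner index so the double sum factors as $X(s)Y(s)$. Your justification of the interchange is also sound, since both $X(s)$ and $Y(s)$ are power series in $1-s$ and therefore converge absolutely strictly inside their discs of convergence, so the nonnegative double series is bounded by the product of the two absolutely convergent series and Tonelli--Fubini applies.
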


\begin{lemma}\label{Lemma 7}
(\textbf{Inverse $N$-transform} \cite{Wei:2019AJC})
For $x:\mathbb{N}_{a+1} \to \mathbb{R}$,  $a\in\mathbb{R}$, if $X(s)={{\mathcal N}_a}\left\{ {x(k)} \right\}$, then the inverse nabla Laplace transform can be expressed as $x\left( k \right) = {\mathcal N}_a^{ - 1}\{ X\left( s \right)\}  \buildrel \Delta \over = \frac{1}{{2\pi {\rm{i}}}}\oint_c {X\left( s \right){{(1 - s)}^{ - k + a}}{\rm{d}}s} ,k \in {\mathbb{N}_{a + 1}}$, where $c$ is a closed curve rotating around the point $(1,{\rm i}0)$ clockwise and it also locates in the convergent domain of $X(s)$.
\end{lemma}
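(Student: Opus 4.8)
The plan is to substitute the defining series for $X(s)$ into the proposed contour integral, integrate term by term, and reduce the whole computation to a single elementary contour integral of an integer power of $(1-s)$ around $s=1$.

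First I would write, for $k\in\mathbb{N}_{a+1}$,
\begin{equation}\label{EqProp1}
\begin{array}{l}
\frac{1}{{2\pi {\rm{i}}}}\oint_c {X\left( s \right){{(1 - s)}^{ a-k}}{\rm{d}}s}\\
 = \frac{1}{{2\pi {\rm{i}}}}\oint_c {\big[ \sum\nolimits_{j = 1}^{ + \infty } {{{\left( {1 - s} \right)}^{j - 1}}x\left( {j + a} \right)} \big]{{(1 - s)}^{ a-k}}{\rm{d}}s},
\end{array}
\end{equation}
using Definition~\ref{Definition 11}. Since $c$ is a closed curve encircling $s=1$ and lying inside the region of convergence, and since $X(s)$ is, in the variable $w = 1-s$, a power series $\sum_{m\ge 0} x(m+1+a)\,w^{m}$ convergent on an open disc about $w=0$ (Lemma~\ref{Lemma 1}), the series converges absolutely and uniformly on the compact set $c$. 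Hence I may interchange summation and integration and obtain $\sum_{j = 1}^{+\infty} x(j+a)\,I_{j,k}$ with $I_{j,k} \triangleq \frac{1}{2\pi{\rm i}}\oint_c (1-s)^{\,j-1+a-k}\,{\rm d}s$.

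Next I would evaluate $I_{j,k}$. Because $k - a\in\mathbb{Z}_+$, the exponent $n = j-1+a-k$ is an integer, so $(1-s)^n$ is single-valued on $c$. Parametrising $c$ clockwise about $s=1$ by $s = 1 + \rho\,{\rm e}^{-{\rm i}\theta}$, $\theta\in[0,2\pi]$, a direct computation gives $\oint_c (1-s)^n\,{\rm d}s = 2\pi{\rm i}$ when $n = -1$ and $0$ otherwise; equivalently, this is the Cauchy coefficient-extraction formula for the power series in $w = 1-s$ read against the clockwise orientation. Thus $I_{j,k} = 1$ precisely when $j = k - a$ and $I_{j,k} = 0$ for every other $j\ge 1$. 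Since $k\in\mathbb{N}_{a+1}$ forces $k-a$ to be a genuine index of the sum, only that term survives, and it equals $x\big((k-a)+a\big) = x(k)$, which is exactly the claimed inversion formula.

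The main obstacle is the justification of the term-by-term integration in \eqref{EqProp1}: one must know that $c$ can be taken inside the \emph{open} region of convergence of the series, so that absolute and uniform convergence on the compact curve $c$ hold and the interchange of $\sum$ and $\oint$ is legitimate. This is precisely what the hypothesis $X(s) = {\mathcal N}_a\{x(k)\}$ together with Lemma~\ref{Lemma 1} supplies. Everything else is the bookkeeping of the index shift $j = k-a$ and the elementary identity $\frac{1}{2\pi{\rm i}}\oint_c (1-s)^n\,{\rm d}s = \delta_{n,-1}$ for a small clockwise loop around $s=1$; the clockwise orientation in Lemma~\ref{Lemma 7} is exactly what makes the sign come out right under the substitution $w = 1-s$.
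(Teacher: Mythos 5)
Your proof is correct, and there is nothing in the paper to compare it against: Lemma~\ref{Lemma 7} is stated without proof, being quoted from the cited reference \cite{Wei:2019AJC}. Your argument is the standard (and essentially the only natural) derivation: substitute the defining series, view $X$ as a power series in $w=1-s$, interchange $\sum$ and $\oint$ using uniform convergence on the compact curve $c$ taken strictly inside the open disc of convergence, and finish with the coefficient-extraction identity $\frac{1}{2\pi\mathrm{i}}\oint_c (1-s)^{n}\,\mathrm{d}s=\delta_{n,-1}$ for the clockwise orientation, noting that $k\in\mathbb{N}_{a+1}$ makes the exponent an integer and $j=k-a$ a legitimate index. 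The only step you leave implicit is that the lemma's $c$ is an \emph{arbitrary} closed curve winding once clockwise around $s=1$ in the convergent domain, whereas you evaluate on the circle $s=1+\rho\,\mathrm{e}^{-\mathrm{i}\theta}$; this is harmless, since the integrand $X(s)(1-s)^{a-k}$ is analytic in the punctured disc and Cauchy's theorem lets you deform $c$ to that circle. With that remark added, the proof is complete.
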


\begin{theorem}\label{Theorem 12}
(\textbf{Multiplication})
For $x,y:\mathbb{N}_{a+1} \to \mathbb{R}$, $a\in\mathbb{R}$, if $X(s)={{\mathcal N}_a}\left\{ {x(k)} \right\}$ and $Y(s)={{\mathcal N}_a}\left\{ {y(k)} \right\}$, then one has ${{\cal N}_a}\left\{ {x(k)y(k)} \right\} = \frac{1}{{2\pi {\rm{i}}}}\oint_c {X\left( z \right)Y\big( {\frac{{s - z}}{{1 - z}}} \big){{(1 - z)}^{ - 1}}{\rm{d}}z} $, where $c$  is a closed curve rotating around the point $(1,{\rm i}0)$ clockwise located in the overlapping convergent domain of $X(z)$ and $Y\big( {\frac{{s - z}}{{1 - z}}} \big)$.
\end{theorem}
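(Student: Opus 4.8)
The plan is to imitate the derivation of the convolution rule (Lemma~\ref{Lemma 6}), but run it ``in reverse'': express one of the two factors through the inversion formula of Lemma~\ref{Lemma 7}, push the defining $N$-transform sum of the product inside the resulting contour integral, and then recognise the leftover series as $Y$ evaluated at a M\"obius-transformed frequency.

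First I would set $m=k+a$ in Lemma~\ref{Lemma 7} to obtain $x(k+a)=\frac{1}{2\pi{\rm i}}\oint_c X(z){(1-z)}^{-k}\,{\rm d}z$ and substitute it into Definition~\ref{Definition 11} applied to $x(k)y(k)$:
\[
{{\cal N}_a}\{x(k)y(k)\}=\sum\nolimits_{k=1}^{+\infty}{(1-s)}^{k-1}y(k+a)\,\frac{1}{2\pi{\rm i}}\oint_c X(z){(1-z)}^{-k}\,{\rm d}z .
\]
Interchanging $\sum_k$ and $\oint_c$ (the analytic crux, discussed below) and pulling one factor ${(1-z)}^{-1}$ out of the inner sum gives
\[
{{\cal N}_a}\{x(k)y(k)\}=\frac{1}{2\pi{\rm i}}\oint_c X(z){(1-z)}^{-1}\Big[\sum\nolimits_{k=1}^{+\infty}\big({\textstyle\frac{1-s}{1-z}}\big)^{k-1}y(k+a)\Big]{\rm d}z .
\]

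Next I would make the substitution $1-w=\frac{1-s}{1-z}$, i.e.\ $w=\frac{s-z}{1-z}$; then $w-1=\frac{s-1}{1-z}$, so the bracketed sum is exactly $\sum_{k=1}^{+\infty}{(1-w)}^{k-1}y(k+a)=Y(w)$ in the sense of Definition~\ref{Definition 11}, which equals $Y\big(\frac{s-z}{1-z}\big)$. Substituting back produces the claimed identity
\[
{{\cal N}_a}\{x(k)y(k)\}=\frac{1}{2\pi{\rm i}}\oint_c X(z)Y\big({\textstyle\frac{s-z}{1-z}}\big){(1-z)}^{-1}\,{\rm d}z .
\]

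The step I expect to be the main obstacle is the justification of the interchange of summation and contour integration, and this is precisely why the hypothesis confines $c$ to the \emph{overlapping} convergent domain of $X(z)$ and $Y\big(\frac{s-z}{1-z}\big)$. On such a contour, Lemma~\ref{Lemma 1} applied to $y$ (say $|y(k+a)|\le Mr^{k+a}$ for all large $k$) forces $\big|\frac{1-s}{1-z}\big|<\frac1r$, and since $c$ is compact this holds with a fixed margin $\big|\frac{1-s}{1-z}\big|\le\rho<\frac1r$ for all $z\in c$; the inner series is then dominated term by term by a convergent geometric series independent of $z$, hence converges uniformly on $c$, while $X(z)$ stays bounded there, so the Weierstrass $M$-test legitimises the term-by-term integration. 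I would also record the implicit compatibility condition that $s$ and $c$ must be chosen so that $z\mapsto\frac{s-z}{1-z}$ maps $c$ into the ROC of $Y$, and remark that this result is the $N$-transform analogue of the classical complex-multiplication (Borel) theorem, with the M\"obius map $w=\frac{s-z}{1-z}$ taking the place of the additive shift $w=s-z$ that arises for the ordinary Laplace transform.
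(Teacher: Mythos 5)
Your proposal follows essentially the same route as the paper's proof: replace $x(k+a)$ by its inverse $N$-transform representation $\frac{1}{2\pi{\rm i}}\oint_c X(z)(1-z)^{-k}\,{\rm d}z$, interchange the sum and the contour integral, factor out $(1-z)^{-1}$, and identify the remaining series as $Y\big(\frac{s-z}{1-z}\big)$ via $\frac{1-s}{1-z}=1-\frac{s-z}{1-z}$. The only difference is that you explicitly justify the interchange by uniform convergence on the compact contour (Weierstrass $M$-test), a point the paper passes over silently; this is a welcome addition but not a different argument.
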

\begin{proof}
By applying Lemma \ref{Lemma 7}, one has
\begin{equation}\label{Eq31}
\begin{array}{l}
{{\mathcal N}_a}\left\{ {x(k)y(k)} \right\}\\
 = \sum\nolimits_{k = 1}^{ + \infty } {{{\left( {1 - s} \right)}^{k - 1}}x\left( {k + a} \right)} y\left( {k + a} \right)\\
 = \sum\nolimits_{k = 1}^{ + \infty } {{{\left( {1 - s} \right)}^{k - 1}}\big[ {\frac{1}{{2\pi {\rm{i}}}}\oint_c {X\left( z \right){{(1 - z)}^{ - k}}{\rm{d}}z} } \big]} y\left( {k + a} \right)\\
 = \frac{1}{{2\pi {\rm{i}}}}\sum\nolimits_{k = 1}^{ + \infty } {\oint_c {X\left( z \right){{(1 - z)}^{ - k}}{\rm{d}}z} {{\left( {1 - s} \right)}^{k - 1}}} y\left( {k + a} \right)\\
 = \frac{1}{{2\pi {\rm{i}}}}\oint_c {X\left( z \right)\sum\nolimits_{k = 1}^{ + \infty } {{{\left( {1 - s} \right)}^{k - 1}}} y\left( {k + a} \right){{(1 - z)}^{ - k}}{\rm{d}}z} \\
 = \frac{1}{{2\pi {\rm{i}}}}\oint_c {X\left( z \right)\sum\nolimits_{k = 1}^{ + \infty } {{{\big( {1 - \frac{{s - z}}{{1 - z}}} \big)}^{k - 1}}} y\left( {k + a} \right){{(1 - z)}^{ - 1}}{\rm{d}}z} \\
 = \frac{1}{{2\pi {\rm{i}}}}\oint_c {X\left( z \right)Y\big( {\frac{{s - z}}{{1 - z}}} \big){{(1 - z)}^{ - 1}}{\rm{d}}z}.
\end{array}
\end{equation}
\end{proof}

Defining $z = 1 + \rho {{\rm{e}}^{{\rm i}\theta }}$ and $s = 1 + r{{\rm{e}}^{{\rm i}\varphi }}$, then a simple result ${{\cal N}_a}\left\{ {x(k)y(k)} \right\} = \frac{1}{{2\pi }}\int_{ - \pi }^\pi  {X(1 + \rho {{\rm{e}}^{{\rm i}\theta }})Y(1 - \frac{r}{\rho }{{\rm{e}}^{{\rm i}\left( {\varphi  - \theta } \right)}}){\rm{d}}\theta } $ can be obtained. Sometimes, Lemma \ref{Lemma 6} is called Convolution theorem in time domain and Theorem \ref{Theorem 12} is called Convolution theorem in frequency domain.

\begin{theorem}\label{Theorem 13}
(\textbf{Parseval's theorem})
For $x,y:\mathbb{N}_{a+1} \to \mathbb{R}$, $a\in\mathbb{R}$, if $X(s)={{\mathcal N}_a}\left\{ {x(k)} \right\}$ and $Y(s)={{\mathcal N}_a}\left\{ {y(k)} \right\}$, then one has $\sum\nolimits_{k = a + 1}^{ + \infty } {x\left( k \right)} {y{^*} }\left( k \right) = \frac{1}{{2\pi {\rm{i}}}}\oint_c {X\left( z \right){Y{^*}}\big( {\frac{{{z{^*} }}}{{{z{^*} } - 1}}} \big){{(1 - z)}^{ - 1}}{\rm{d}}z}$, where $c$  is a closed curve rotating around the point $(1,{\rm i}0)$ clockwise located in the overlapping convergent domain of $X(z)$ and ${Y{^*}}\big( {\frac{{{z{^*} }}}{{{z{^*} } - 1}}} \big)$.
\end{theorem}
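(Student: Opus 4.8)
The plan is to follow the route used for Theorem~\ref{Theorem 12}: begin from the time-domain series, substitute the inverse $N$-transform of Lemma~\ref{Lemma 7} for the factor $x$, interchange the order of summation and contour integration, and then recognize the remaining power series as a conjugated $N$-transform of $y$.

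First I would write the target sum as a series indexed from $k=1$ and insert the inversion formula $x(k+a)=\frac{1}{2\pi{\rm i}}\oint_c X(z)(1-z)^{-k}{\rm d}z$, obtained from Lemma~\ref{Lemma 7} at the point $k+a\in\mathbb{N}_{a+1}$:
\[
\sum\nolimits_{k=a+1}^{+\infty}x(k)y^*(k)=\sum\nolimits_{k=1}^{+\infty}\Big[\tfrac{1}{2\pi{\rm i}}\oint_c X(z)(1-z)^{-k}{\rm d}z\Big]y^*(k+a).
\]
Interchanging $\sum_k$ and $\oint_c$ — permissible once $c$ is placed inside the overlapping region of convergence, where the geometric-type bound of Lemma~\ref{Lemma 1} makes $\sum_k(1-z)^{-k}y^*(k+a)$ converge uniformly on $c$ (recall Definition~\ref{Definition 12}) — leaves the bracketed series inside the integral. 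Extracting one factor $(1-z)^{-1}$, using $(1-z)^{-(k-1)}y^*(k+a)=\big[(1-z^*)^{-(k-1)}y(k+a)\big]^*$, pulling the conjugation outside the sum, and invoking the elementary identity $1-\frac{z^*}{z^*-1}=(1-z^*)^{-1}$, one finds that the bracketed series equals
\[
(1-z)^{-1}\Big[\sum\nolimits_{k=1}^{+\infty}\big(1-\tfrac{z^*}{z^*-1}\big)^{k-1}y(k+a)\Big]^*=(1-z)^{-1}\,Y^*\!\big(\tfrac{z^*}{z^*-1}\big),
\]
since the inner series is exactly $Y$ evaluated at $\frac{z^*}{z^*-1}$ in the sense of Definition~\ref{Definition 11}, and the outer conjugation turns $Y$ into $Y^*$ precisely as in Theorem~\ref{Theorem 6}. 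Substituting back yields the asserted contour formula, with $c$ necessarily lying in the common convergent domain of $X(z)$ and $Y^*\!\big(\frac{z^*}{z^*-1}\big)$. An equivalent and shorter derivation notes that $\sum_{k=a+1}^{+\infty}x(k)y^*(k)=\mathcal{N}_a\{x(k)y^*(k)\}|_{s=0}$, applies the Multiplication theorem (Theorem~\ref{Theorem 12}) to $x$ and $y^*$ together with $\mathcal{N}_a\{y^*(k)\}=Y^*(s^*)$ from Theorem~\ref{Theorem 6}, and then sets $s=0$ using $\frac{0-z}{1-z}=\frac{z}{z-1}$ and $\overline{\frac{z}{z-1}}=\frac{z^*}{z^*-1}$.

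The index bookkeeping and the identity $1-\frac{z^*}{z^*-1}=(1-z^*)^{-1}$ are routine; the genuine obstacle is justifying the exchange of the infinite sum with the contour integral. One must argue that $c$ can be chosen inside the intersection of the regions of convergence of $X(z)$ and of the transformed and conjugated $Y$, and that on such a $c$ the series converges uniformly, which is where Lemma~\ref{Lemma 1} and Definition~\ref{Definition 12} enter. Once that is settled, the remaining manipulation is the same bookkeeping as in the proof of Theorem~\ref{Theorem 12}.
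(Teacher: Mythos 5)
Your proof is correct and follows essentially the same route as the paper's: insert the inverse $N$-transform of Lemma~\ref{Lemma 7} for $x(k)$, interchange the sum with the contour integral, and recognize the remaining conjugated series as $Y^*\big(\tfrac{z^*}{z^*-1}\big)(1-z)^{-1}$ via the identity $1-\tfrac{z}{z-1}=(1-z)^{-1}$ together with the conjugation property of Theorem~\ref{Theorem 6}. Your added remarks on justifying the sum--integral exchange and the alternative derivation (Theorem~\ref{Theorem 12} applied to $x$ and $y^*$, then evaluated at $s=0$) are sound extras, but the core argument coincides with the paper's.
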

\begin{proof}
Based on Definition \ref{Definition 11}, it obtains
\begin{equation}\label{Eq32}
\begin{array}{l}
\sum\nolimits_{k = a + 1}^{ + \infty } {x\left( k \right)} {y{^*} }\left( k \right)\\
 = \sum\nolimits_{k = a + 1}^{ + \infty } {\big[ {\frac{1}{{2\pi {\rm{i}}}}\oint_c {X\left( z \right){{(1 - z)}^{ - k + a}}{\rm{d}}z} } \big]} {y{^*} }\left( k \right)\\
 = \frac{1}{{2\pi {\rm{i}}}}\oint_c {X\left( z \right)\sum\nolimits_{k = a + 1}^{ + \infty } {{y{^*} }\left( k \right){{(1 - z)}^{ - k + a}}{\rm{d}}z} } \\
 = \frac{1}{{2\pi {\rm{i}}}}\oint_c {X\left( z \right)\sum\nolimits_{l = 1}^{ + \infty } {{y{^*} }\left( {l + a} \right){{(1 - z)}^{ - l}}{\rm{d}}z} } \\
 = \frac{1}{{2\pi {\rm{i}}}}\oint_c {X\left( z \right)\sum\nolimits_{l = 1}^{ + \infty } {{y{^*} }\left( {l + a} \right){{(1 - \frac{z}{{z - 1}})}^{l - 1}}{{(1 - z)}^{ - 1}}{\rm{d}}z} } \\
 = \frac{1}{{2\pi {\rm{i}}}}\oint_c {X\left( z \right){Y{^*}}\big( {\frac{{{z{^*} }}}{{{z{^*} } - 1}}} \big){{(1 - z)}^{ - 1}}{\rm{d}}z}.
\end{array}
\end{equation}
\end{proof}

Defining $z = 1 + \rho {{\rm{e}}^{{\rm{i}}\theta }}$, then the result can be simplified as $\sum\nolimits_{k = a + 1}^{ + \infty } {x\left( k \right)} {y^ * }\left( k \right) = \frac{1}{{2\pi }}\int_{ - \pi }^\pi  {X( {1 + \rho {{\rm{e}}^{{\rm{i}}\theta }}} ){Y{^*} }( {1 + {\rho ^{ - 1}}{{\rm{e}}^{{\rm{i}}\theta }}} ){\rm{d}}\theta } $. Moreover, when $x(k)=y(k)$ and $\rho=1$, the result can be simplified further $\sum\nolimits_{k = a + 1}^{ + \infty } {|x\left( k \right)|^2}  = \frac{1}{{2\pi }}\int_{ - \pi }^\pi  {|X( {1 +  {{\rm{e}}^{{\rm{i}}\theta }}} )|^2{\rm{d}}\theta } $, which means that the energy in the time domain is equal to that in the frequency domain exactly.

Lemma \ref{Lemma 6}, Theorem \ref{Theorem 12} and Theorem \ref{Theorem 13} build the bridge between the time domain and the frequency domain.

\begin{lemma}\label{Lemma 8}
(\textbf{Initial value theorem} \cite{Wei:2019CNSNSc})
For $x:\mathbb{N}_{a+1} \to \mathbb{R}$,  $a\in\mathbb{R}$, if $X(s)={{\mathcal N}_a}\left\{ {x(k)} \right\}$, then one has $x\left( a+1 \right) = \mathop {\lim }\limits_{s \to 1} X\left( s \right)$.
\end{lemma}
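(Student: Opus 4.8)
The plan is to exploit the fact that, viewed as a function of the single complex variable $w=1-s$, the $N$-transform $X(s)=\sum_{k=1}^{+\infty}(1-s)^{k-1}x(k+a)$ is nothing but the power series $\sum_{n=0}^{+\infty}x(n+a+1)\,w^{n}$ (reindex by $n=k-1$). First I would invoke Lemma \ref{Lemma 1}, or simply the standing hypothesis that $X(s)$ is well defined as an $N$-transform: there is an $r>0$ such that this series converges for $|s-1|<r$, i.e.\ for $|w|<r$. In particular the point $w=0$ (that is, $s=1$) lies inside the domain of convergence, where the series collapses to the single surviving term $x(a+1)$, since $(1-s)^{k-1}=0$ for $k\ge 2$ and $(1-s)^{0}=1$ for $k=1$.

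Next I would use the elementary but crucial fact from the theory of power series: a power series is continuous — indeed analytic — throughout the open disc on which it converges, because it converges uniformly on every compact subset thereof. Applying this to $\sum_{n=0}^{+\infty}x(n+a+1)\,w^{n}$ on the disc $|w|<r$ legitimizes interchanging the limit $w\to 0$ with the infinite summation, which yields $\lim_{s\to 1}X(s)=\lim_{w\to 0}\sum_{n=0}^{+\infty}x(n+a+1)\,w^{n}=\sum_{n=0}^{+\infty}x(n+a+1)\lim_{w\to 0}w^{n}=x(a+1)$, because $\lim_{w\to 0}w^{0}=1$ while $\lim_{w\to 0}w^{n}=0$ for every $n\ge 1$. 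This is exactly the claimed identity $x(a+1)=\lim_{s\to 1}X(s)$.

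The only delicate point — the step I expect to be the main obstacle — is precisely this passage of the limit inside the infinite sum; substituting $s=1$ term by term is not automatically valid for an arbitrary series of functions, and one must lean on the uniform convergence guaranteed by the power-series structure (equivalently, on Abel's theorem and the continuity of analytic functions within their radius of convergence). Everything else reduces to the reindexing and a one-line substitution, so once the term-by-term passage to the limit is justified the theorem follows immediately.
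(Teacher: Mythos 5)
Your proposal is correct: rewriting $X(s)$ as the power series $\sum_{n=0}^{+\infty}x(n+a+1)(1-s)^{n}$ and using its continuity (uniform convergence on compact subsets of the disc $|s-1|<r$) to pass the limit $s\to 1$ inside the sum gives exactly $x(a+1)=\lim_{s\to 1}X(s)$; the appeal to Abel's theorem is unnecessary, since interior continuity of a power series suffices. Note that the paper itself states Lemma~\ref{Lemma 8} without proof, quoting it from \cite{Wei:2019CNSNSc}, and your argument is essentially the standard one used there (term-by-term evaluation at $s=1$, with all terms $k\ge 2$ vanishing, justified by convergence in a disc around $s=1$), so there is no substantive divergence to report.
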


\begin{lemma}\label{Lemma 9}
(\textbf{Final value theorem} \cite{Wei:2019CNSNSc})
For $x:\mathbb{N}_{a+1} \to \mathbb{R}$,  $a\in\mathbb{R}$, if $X(s)={{\mathcal N}_a}\left\{ {x(k)} \right\}$ and the poles of $sX\left( s \right)$ satisfy $|s-1|>1$, then one has $ x\left( a+\infty\right) = \mathop {\lim }\limits_{s \to 0} sX\left( s \right)$.
\end{lemma}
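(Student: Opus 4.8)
The plan is to convert the final value into a continuity statement for a power series on the closure of its disc of convergence, by first passing to the first nabla difference and then invoking the linearity and time-delay rules already established. First I would set $y(k)\triangleq\nabla x(k)=x(k)-x(k-1)$ on $\mathbb{N}_{a+1}$, keeping the convention $x(k)=0$ for $k\le a$ that was used in the proofs of Theorems \ref{Theorem 3} and \ref{Theorem 4}. Then Theorem \ref{Theorem 1} together with Theorem \ref{Theorem 3} (with $m=1$) gives ${\mathcal N}_a\{y(k)\}={\mathcal N}_a\{x(k)\}-{\mathcal N}_a\{x(k-1)\}=X(s)-(1-s)X(s)=sX(s)$, so that
\begin{equation}\label{EqFVa}
{\textstyle sX(s)=\sum\nolimits_{k=1}^{+\infty}{{\left({1-s}\right)}^{k-1}}\big[x(k+a)-x(k+a-1)\big],}
\end{equation}
which is a power series in the variable $1-s$.

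Next I would translate the pole hypothesis into a coefficient bound. Viewing the right-hand side of (\ref{EqFVa}) as a power series in $w=1-s$, the assumption that every pole of $sX(s)$ satisfies $|s-1|>1$ means this series has radius of convergence larger than $1$; since a power series converges absolutely at every interior point of its disc of convergence, and $s=0$ corresponds to the interior point $w=1$, we obtain $\sum\nolimits_{k=1}^{+\infty}\big|x(k+a)-x(k+a-1)\big|<+\infty$. This is the key estimate. Given it, $\big|(1-s)^{k-1}[x(k+a)-x(k+a-1)]\big|\le\big|x(k+a)-x(k+a-1)\big|$ for every $s$ with $|s-1|\le1$, so the Weierstrass $M$-test shows the series in (\ref{EqFVa}) converges uniformly on the closed disc $|s-1|\le1$ and hence is continuous there. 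As $s=0$ belongs to this disc, the limit may be taken inside the sum,
\begin{equation}\label{EqFVb}
{\textstyle \mathop{\lim}\limits_{s\to0}sX(s)=\sum\nolimits_{k=1}^{+\infty}\big[x(k+a)-x(k+a-1)\big].}
\end{equation}

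Finally I would identify the right-hand side of (\ref{EqFVb}). Its $N$-th partial sum telescopes to $\sum\nolimits_{k=1}^{N}[x(k+a)-x(k+a-1)]=x(N+a)-x(a)=x(N+a)$, so the convergence just established says exactly that $x(N+a)$ tends to a finite limit as $N\to+\infty$; that limit is what is denoted $x(a+\infty)$, and it equals the series in (\ref{EqFVb}). Chaining (\ref{EqFVa})--(\ref{EqFVb}) then yields $x(a+\infty)=\lim_{s\to0}sX(s)$.

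The step I expect to require the most care is making precise the link between the hypothesis on the poles of $sX(s)$ and the radius of convergence of the series in (\ref{EqFVa}). For rational $X(s)$, the case of main interest, this is the classical fact that the radius of convergence of a power series equals the distance from its centre to the nearest singularity of the sum, so ``all poles in $|s-1|>1$'' is literally ``radius of convergence in $1-s$ larger than $1$,'' after which absolute convergence at $s=0$, the $M$-test, and the termwise limit are automatic; for more general $X(s)$ one simply reads the radius-of-convergence statement as the meaning of the hypothesis (and could alternatively recover a geometric majorant $|y(k+a)|\le Mr^{k}$ with $r<1$ in the spirit of Lemma \ref{Lemma 1}). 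Everything else --- the telescoping identity (\ref{EqFVa}) and the continuity of a uniformly convergent series --- is routine.
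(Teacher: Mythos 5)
The paper itself offers no proof of Lemma \ref{Lemma 9} (it is quoted from \cite{Wei:2019CNSNSc}), so your argument has to stand on its own. Its skeleton --- pass to $y(k)=\nabla x(k)$ under the zero-extension convention, note ${\mathcal N}_a\{y(k)\}=sX(s)$ (consistent with Lemma \ref{Lemma 10} for $n=1$), interchange $\lim_{s\to0}$ with the sum, and telescope --- is the standard route, and for \emph{rational} $sX(s)$ your execution is complete: finitely many poles with $|s-1|>1$ do give radius of convergence $>1$ in $w=1-s$, absolute convergence at the interior point $w=1$, and the $M$-test then licenses the termwise limit.

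The genuine gap is the key translation ``all poles of $sX(s)$ satisfy $|s-1|>1$ $\Rightarrow$ the series in $w=1-s$ has radius of convergence larger than $1$.'' That implication is valid only when poles are the \emph{only} singularities. In the setting this lemma is actually used for, $X(s)$ involves $s^\alpha$ with noninteger $\alpha$ (Lemma \ref{Lemma 5}, Lemmas \ref{Lemma 11}--\ref{Lemma 13}, Example \ref{Examle 4}, where $X(s)=\frac{s^{\alpha-1}}{s^\alpha-\lambda}$ and $sX(s)=\frac{s^{\alpha}}{s^\alpha-\lambda}$); then $s=0$ is a branch point of $sX(s)$ sitting exactly on the circle $|s-1|=1$, the radius of convergence is exactly $1$, the point $s=0$ (i.e.\ $w=1$) is a boundary point, and neither absolute convergence there nor the uniform-convergence step follows --- indeed in such cases $|\nabla x(k)|$ decays only algebraically, so nothing like a geometric majorant $Mr^k$ with $r<1$ is available. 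Your fallback of ``reading the radius-of-convergence statement as the meaning of the hypothesis'' strengthens the hypothesis rather than proving the lemma as stated. To cover the fractional case you would need a different justification of the interchange: for instance, first deduce from the pole condition (via the inverse transform of Lemma \ref{Lemma 7} and the reasoning behind Lemma \ref{Lemma 14}) that $x(k)$ converges, equivalently that $\sum_k \nabla x(k+a)$ converges, and then conclude with an Abel-type continuity theorem as $w\to1$ from inside the disc, with $s\to0$ restricted to the region where the series represents $sX(s)$. Your telescoping identity and the reduction to $sX(s)$ are fine; it is this analytic step that needs repair.
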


Lemma \ref{Lemma 8} and Lemma \ref{Lemma 9} provide two effective way to access the value of $x(k)$ at $k=a+1$ and $k=a+\infty$, respectively. Along this way, a general case can be developed as \cite{Wei:2019CNSNSc}
\begin{equation}\label{Eq33}
{\textstyle f\left( {a + m } \right) = \mathop {\lim }\limits_{s \to 1} \frac{{F\left( s \right) - \sum\nolimits_{k = 1}^{m  - 1} {{{\left( {1 - s} \right)}^{k - 1}}f\left( {a + k} \right)} }}{{{{\left( {1 - s} \right)}^{m  - 1}}}},m\in\mathbb{Z}_+.}
\end{equation}

\begin{lemma}\label{Lemma 10}
(\textbf{Nabla difference} \cite{Goodrich:2015Book})
For $x:\mathbb{N}_{a+1-n} \to \mathbb{R}$,  $a\in\mathbb{R}$, if $X(s)={{\mathcal N}_a}\left\{ {x(k)} \right\}$ converges for $|s-1| < r$ for some $r > 0$, then for any $n\in\mathbb{Z}_+$, one has ${\mathcal N}_a\{ {\nabla} ^n x\left( k \right)\} =  {s^n}X(s)  - {\sum\nolimits_{j = 0}^{n - 1} s ^{n - j - 1}}\big[{\nabla^j}x\left( k \right)\big]_{k=a}$ and ${\mathcal N}_a\{ {\nabla} ^n x\left( k \right)\} =  {s^n}X(s) - {\sum\nolimits_{j = 0}^{n - 1} s ^j}\big[{\nabla^{n - j - 1}}x\left( k \right)\big]_{k=a}$ with $|s-1|<r$.
\end{lemma}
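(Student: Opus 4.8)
The plan is to establish the first identity for $n=1$ by direct expansion and then obtain the general case by induction on $n$, deriving the second identity from the first by a reindexing of the sum. For the base case I would \emph{not} invoke the time-delay property (Theorem~\ref{Theorem 3}), because here $x$ is defined on $\mathbb{N}_{a+1-n}$ and therefore need not vanish for indices $\le a$; instead, working straight from Definition~\ref{Definition 11} and writing $\nabla x(k+a)=x(k+a)-x(k+a-1)$, I split the series as $\mathcal{N}_a\{\nabla x(k)\}=\sum_{k=1}^{+\infty}(1-s)^{k-1}x(k+a)-\sum_{k=1}^{+\infty}(1-s)^{k-1}x(k+a-1)$. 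The first sum is $X(s)$; in the second, the substitution $j=k-1$ gives $\sum_{j=0}^{+\infty}(1-s)^{j}x(j+a)=x(a)+(1-s)X(s)$, where the detached $j=0$ term is precisely the boundary value $x(a)=[\nabla^{0}x(k)]_{k=a}$. Subtracting, $\mathcal{N}_a\{\nabla x(k)\}=sX(s)-[\nabla^{0}x(k)]_{k=a}$, which is both stated formulas at $n=1$.

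For the inductive step, suppose the first identity holds at some $n\in\mathbb{Z}_+$, and put $y(k)\triangleq\nabla^{n}x(k)$ so that $\nabla^{n+1}x(k)=\nabla y(k)$. Applying the base case to $y$ yields $\mathcal{N}_a\{\nabla^{n+1}x(k)\}=s\,Y(s)-[\nabla^{0}y(k)]_{k=a}=s\,\mathcal{N}_a\{\nabla^{n}x(k)\}-[\nabla^{n}x(k)]_{k=a}$; substituting the induction hypothesis for $\mathcal{N}_a\{\nabla^{n}x(k)\}$, distributing the factor $s$ through the sum, and absorbing the fresh term $[\nabla^{n}x(k)]_{k=a}$ as the $j=n$ contribution produces $s^{n+1}X(s)-\sum_{j=0}^{n}s^{\,n-j}[\nabla^{j}x(k)]_{k=a}$, i.e.\ the first identity at order $n+1$. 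The second identity then requires no further computation: the index reversal $l=n-1-j$ sends $s^{\,n-j-1}[\nabla^{j}x(k)]_{k=a}$ to $s^{\,l}[\nabla^{\,n-l-1}x(k)]_{k=a}$, so the two sums are the same finite sum written in opposite order.

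Two routine points deserve a line each. First, convergence: since $\nabla^{n}x(k)=\sum_{j=0}^{n}(-1)^{j}\binom{n}{j}x(k-j)$ is a finite linear combination of backward shifts of $x$, Theorem~\ref{Theorem 1} together with the shift computation above (each shifted transform equals $X(s)$ times a polynomial in $(1-s)$ plus finitely many boundary terms) shows $\mathcal{N}_a\{\nabla^{n}x(k)\}$ still converges on $|s-1|<r$, so every manipulation is legitimate on that disk. Second, the domain of $y=\nabla^{n}x$ matches the hypothesis of the base case, and only the values $x(a+1),x(a+2),\dots$ enter $X(s)$, so the induction hypothesis applies verbatim to the restriction of $x$. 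The one place genuine care is needed is the bookkeeping of the boundary term emitted by each application of $\nabla$: an off-by-one in the exponent of $s$ or in the order $\nabla^{j}$ of the difference being evaluated at $k=a$ is the only error that can slip in, so I would write that slot out explicitly rather than leave it to the reader.
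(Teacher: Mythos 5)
Your proof is correct: the $n=1$ identity $\mathcal{N}_a\{\nabla x(k)\}=sX(s)-x(a)$ is verified directly from Definition \ref{Definition 11}, the induction and the absorption of the new boundary term as the $j=n$ summand are sound, the second formula is indeed just the first with the finite sum reversed, and your remarks on convergence and on the domain of $\nabla^n x$ correctly handle the only delicate points. Note that the paper itself gives no proof of Lemma \ref{Lemma 10} (it is quoted from \cite{Goodrich:2015Book}), and your argument is essentially the standard derivation found there: establish the first-order transform formula and iterate.
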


\begin{lemma}\label{Lemma 11}
(\textbf{Nabla fractional sum} \cite{Goodrich:2015Book})
For $x:\mathbb{N}_{a+1} \to \mathbb{R}$,  $a\in\mathbb{R}$, if $X(s)={{\mathcal N}_a}\left\{ {x(k)} \right\}$ converges for $|s-1| < r$ for some $r > 0$, then for any $\alpha>0$, one has ${\mathcal N}_a\{ {}_a^{}{\nabla}_k ^{-\alpha} x\left( k \right)\} =  \frac{1}{s^\alpha}X(s)$ with $|s-1|<\min\{1,r\}$.
\end{lemma}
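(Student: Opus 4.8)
The plan is to recognize the nabla fractional sum as a convolution in the sense of Lemma \ref{Lemma 6} and then combine the convolution theorem with the transform of the rising function from Lemma \ref{Lemma 4}. First I would rewrite the binomial coefficient appearing in Definition \ref{Definition 3} by means of the singularity-free identity $(-1)^j\binom{-\alpha}{j}=\frac{\Gamma(\alpha+j)}{\Gamma(j+1)\Gamma(\alpha)}$ recorded just after Definition \ref{Definition 2}, so that ${}_a\nabla_k^{-\alpha}x(k)=\frac{1}{\Gamma(\alpha)}\sum\nolimits_{j=0}^{k-a-1}\frac{\Gamma(\alpha+j)}{\Gamma(j+1)}x(k-j)$. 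Substituting $i=k-j$ and noting $\frac{\Gamma(\alpha+j)}{\Gamma(j+1)}=(j+1)^{\overline{\alpha-1}}$ turns the right-hand side into $\frac{1}{\Gamma(\alpha)}\sum\nolimits_{i=a+1}^{k}\big(k-i+a+1-a\big)^{\overline{\alpha-1}}x(i)$, which is exactly $g(k)\ast x(k)$ with the kernel $g(k)\triangleq\frac{1}{\Gamma(\alpha)}(k-a)^{\overline{\alpha-1}}$.

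Next I would apply Lemma \ref{Lemma 6} to get ${\mathcal N}_a\{g(k)\ast x(k)\}=G(s)X(s)$, where $G(s)={\mathcal N}_a\{g(k)\}$. Invoking Lemma \ref{Lemma 4} with its parameter replaced by $\alpha-1$ — which is admissible because $\alpha>0$ guarantees $\alpha-1\notin\mathbb{Z}_-$ — gives ${\mathcal N}_a\{(k-a)^{\overline{\alpha-1}}\}=\frac{\Gamma(\alpha)}{s^{\alpha}}$ for $|s-1|<1$, hence $G(s)=\frac{1}{\Gamma(\alpha)}\cdot\frac{\Gamma(\alpha)}{s^{\alpha}}=\frac{1}{s^{\alpha}}$. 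Combining the two yields ${\mathcal N}_a\{{}_a\nabla_k^{-\alpha}x(k)\}=\frac{1}{s^{\alpha}}X(s)$. For the region of convergence, the factor $G(s)$ forces $|s-1|<1$ and $X(s)$ is available only for $|s-1|<r$, so the product is legitimate precisely on $|s-1|<\min\{1,r\}$, which is the claimed statement; as a consistency check, the case $\alpha=1$ recovers the accumulation property of Theorem \ref{Theorem 11}.

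As an alternative I would carry out the computation directly: expand ${\mathcal N}_a\{{}_a\nabla_k^{-\alpha}x(k)\}=\sum\nolimits_{k=1}^{+\infty}(1-s)^{k-1}\sum\nolimits_{j=0}^{k-1}\frac{\Gamma(\alpha+j)}{\Gamma(j+1)\Gamma(\alpha)}x(k+a-j)$, set $i=k-j$, interchange the order of summation, and factor the result as $X(s)\sum\nolimits_{j=0}^{+\infty}\binom{\alpha+j-1}{j}(1-s)^{j}$, the last sum being the generalized binomial series $\big(1-(1-s)\big)^{-\alpha}=s^{-\alpha}$ valid for $|1-s|<1$.

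The one genuine subtlety, common to both routes, is the justification of the interchange of the double summation (or, equivalently, the one hidden inside the proof of Lemma \ref{Lemma 6}): it requires absolute convergence of the iterated series. This holds on $|s-1|<\min\{1,r\}$ because the exponential-type growth bound on $x$ underlying the existence statement in Lemma \ref{Lemma 1} controls the sum over $x$ while $|1-s|<1$ controls the binomial sum, so that Fubini's theorem applies. Everything else is routine index bookkeeping.
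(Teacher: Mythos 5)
Your proof is correct, and there is in fact nothing in the paper to compare it with: by the authors' stated convention (Remark \ref{Remark 1}), Lemma \ref{Lemma 11} is quoted without proof from \cite{Goodrich:2015Book}. Your primary route is, however, exactly the technique the paper itself uses for the analogous Theorem \ref{Theorem 14}, where the Gr\"{u}nwald--Letnikov difference is rewritten as the convolution $\frac{(k-a)^{\overline{-\alpha-1}}}{\Gamma(-\alpha)}\ast x(k)$ and Lemmas \ref{Lemma 4} and \ref{Lemma 6} are invoked; you do the same with the kernel $\frac{1}{\Gamma(\alpha)}(k-a)^{\overline{\alpha-1}}$, and the bookkeeping checks out: $(-1)^j\binom{-\alpha}{j}=\frac{\Gamma(\alpha+j)}{\Gamma(j+1)\Gamma(\alpha)}$, the substitution $i=k-j$ producing the convolution of Lemma \ref{Lemma 6}, and Lemma \ref{Lemma 4} applied with parameter $\alpha-1$, which is admissible since $\alpha>0$ gives $\alpha-1>-1$, hence $\alpha-1\notin\mathbb{Z}_-$. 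Your alternative direct computation via $\sum_{j\ge 0}\binom{\alpha+j-1}{j}(1-s)^{j}=s^{-\alpha}$ for $|1-s|<1$ is also sound and is arguably closer to the textbook proof, since it does not lean on Lemma \ref{Lemma 6}, whose statement in the paper carries no explicit region-of-convergence control. One small refinement to your Fubini justification: the lemma's hypothesis is only that $X(s)$ converges on $|s-1|<r$, not the exponential bound of Lemma \ref{Lemma 1}; but this is harmless, because convergence at some $s_0$ with $|1-s_0|=\rho<r$ forces the terms $(1-s_0)^{k-1}x(k+a)$ to be bounded, so $|x(k+a)|\le M\rho^{1-k}$, which is precisely the growth estimate needed to justify the interchange of summation for $|s-1|<\min\{1,r\}$. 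Your consistency check that $\alpha=1$ recovers Theorem \ref{Theorem 11} agrees with the paper's own remark following Lemma \ref{Lemma 11}.
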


Note that Lemma \ref{Lemma 11} actually implies the integer order case, i.e., $\alpha\in\mathbb{Z}_+$ and Theorem \ref{Theorem 11} can be regarded as a special case of Lemma \ref{Lemma 11} with $\alpha=1$.

\begin{lemma}\label{Lemma 12}
(\textbf{Nabla Caputo fractional difference} \cite{Wei:2019CNSNSb})
For $x:\mathbb{N}_{a+1-n} \to \mathbb{R}$,  $a\in\mathbb{R}$, if $X(s)={{\mathcal N}_a}\left\{ {x(k)} \right\}$ converges for $|s-1| < r$ for some $r > 0$, then for any $\alpha \in (n-1,n)$ and $n\in\mathbb{Z}_+$, one has ${\mathcal N}_a\{ {}_a^{\rm C}{\nabla}_k ^\alpha x\left( k \right)\} =  {s^\alpha}X(s)  - {\sum\nolimits_{j = 0}^{n - 1} s ^{\alpha - j - 1}}\big[{\nabla^j}x\left( k \right)\big]_{k=a}$ with $|s-1|<r$.
\end{lemma}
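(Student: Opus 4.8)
The plan is to recognise the Caputo difference as a nabla fractional \emph{sum} of an integer-order difference, and then to apply Lemma \ref{Lemma 11} followed by Lemma \ref{Lemma 10}. By Definition \ref{Definition 4}, ${}_a^{\rm C}\nabla_k^\alpha x(k)={}_a\nabla_k^{\alpha-n}\nabla^n x(k)$; since $\alpha\in(n-1,n)$ we may write $\alpha-n=-(n-\alpha)$ with $n-\alpha\in(0,1)$, so this is the nabla fractional sum of order $n-\alpha>0$ of the auxiliary sequence $y(k)\triangleq\nabla^n x(k)$. Note that $y$ is defined on $\mathbb{N}_{a+1}$ precisely because $x$ is defined on $\mathbb{N}_{a+1-n}$, so the hypotheses of the two cited lemmas are of the right shape.

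First I would record the transform of the inner sequence. Since $X(s)$ converges on $|s-1|<r$, Lemma \ref{Lemma 10} shows that $Y(s)\triangleq{\mathcal N}_a\{\nabla^n x(k)\}$ also converges there, with
\begin{equation}\label{EqPf12a}
Y(s)=s^nX(s)-\sum\nolimits_{j=0}^{n-1}s^{\,n-j-1}\big[\nabla^j x(k)\big]_{k=a}.
\end{equation}
In particular $Y$ is a bona fide $N$-transform of a sequence on $\mathbb{N}_{a+1}$, so Lemma \ref{Lemma 11} applies to $y$ with order $n-\alpha$ and gives
\begin{equation}\label{EqPf12b}
{\mathcal N}_a\big\{{}_a^{\rm C}\nabla_k^\alpha x(k)\big\}={\mathcal N}_a\big\{{}_a\nabla_k^{-(n-\alpha)}y(k)\big\}=\frac{1}{s^{\,n-\alpha}}Y(s).
\end{equation}

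The remaining step is purely algebraic: substitute (\ref{EqPf12a}) into (\ref{EqPf12b}) and distribute the factor $s^{\alpha-n}$, using $s^{\alpha-n}\cdot s^n=s^\alpha$ and $s^{\alpha-n}\cdot s^{\,n-j-1}=s^{\,\alpha-j-1}$, to arrive at ${\mathcal N}_a\{{}_a^{\rm C}\nabla_k^\alpha x(k)\}=s^\alpha X(s)-\sum_{j=0}^{n-1}s^{\,\alpha-j-1}\big[\nabla^j x(k)\big]_{k=a}$, which is the asserted identity.

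I do not expect a serious obstacle here; the computation is routine once the decomposition is in place. The points that require attention are the bookkeeping ones: verifying that $\nabla^n x(k)$ meets the hypothesis of Lemma \ref{Lemma 11} (which is exactly what (\ref{EqPf12a}) certifies) and tracking the region of convergence, which is the intersection of the domain $|s-1|<r$ inherited from Lemma \ref{Lemma 10} with the disc $|s-1|<1$ attached to the fractional-sum transform of Lemma \ref{Lemma 11} (cf. Lemma \ref{Lemma 4}); on that common region the displayed identity holds, and it persists elsewhere by analytic continuation away from the pole of $s^{\alpha-n}$ at $s=0$.
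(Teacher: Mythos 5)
Your derivation is correct: decomposing ${}_a^{\rm C}\nabla_k^\alpha x(k)={}_a\nabla_k^{-(n-\alpha)}\nabla^n x(k)$ via Definition \ref{Definition 4} and chaining Lemma \ref{Lemma 10} with Lemma \ref{Lemma 11} is exactly the intended route, and the algebra $s^{\alpha-n}\cdot s^{n-j-1}=s^{\alpha-j-1}$ reproduces the stated identity. Note that this paper does not prove Lemma \ref{Lemma 12} itself but imports it from \cite{Wei:2019CNSNSb}; your argument is the standard derivation that the paper's own toolkit supports, and your remark on the region of convergence (the series argument only certifies $|s-1|<\min\{1,r\}$, with the stated $|s-1|<r$ recovered by analytic continuation) is an honest accounting of the one point the lemma statement glosses over.
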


Interestingly, the initial conditions regarding to the Caputo definition are the same with that in Lemma \ref{Lemma 10}, i.e., $\big[{\nabla^j}x\left( k \right)\big]_{k=a}$, $j=0,1,\cdots,n-1$.

\begin{lemma}\label{Lemma 13}
(\textbf{Nabla Reimann--Liouville fractional difference} \cite{Wei:2019CNSNSb})
For $x:\mathbb{N}_{a+1-n} \to \mathbb{R}$,  $a\in\mathbb{R}$, if $X(s)={{\mathcal N}_a}\left\{ {x(k)} \right\}$ converges for $|s-1| < r$ for some $r > 0$, then for any $\alpha \in (n-1,n)$ and $n\in\mathbb{Z}_+$, one has ${\mathcal N}_a\{ {}_a^{\rm R}{\nabla}_k ^\alpha x\left( k \right)\} =  {s^\alpha}X(s) - {\sum\nolimits_{j = 0}^{n - 1} s ^j}\big[{{}_a^{\rm R}\nabla_k^{\alpha - j - 1}}x\left( k \right)\big]_{k=a}$ with $|s-1|<r$.
\end{lemma}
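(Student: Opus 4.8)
The plan is to reduce the claim to the two operators it is built from: by Definition~\ref{Definition 5}, ${}_a^{\rm R}\nabla_k^\alpha x(k)=\nabla^n\,{}_a\nabla_k^{\alpha-n}x(k)$, so I would obtain the $N$-transform of the Riemann--Liouville difference by first transforming the inner fractional sum with Lemma~\ref{Lemma 11} and then transforming the outer $n$-th nabla difference with Lemma~\ref{Lemma 10}. This is the same two-step recipe that underlies the Caputo formula of Lemma~\ref{Lemma 12}, only with the order of the two operators interchanged.

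Concretely, I would set $y(k):={}_a\nabla_k^{\alpha-n}x(k)$, the $(n-\alpha)$-th nabla fractional sum of $x$ (note $n-\alpha\in(0,1)$), extended by zero on $\mathbb{N}_{a+1-n}\setminus\mathbb{N}_{a+1}$ so that $\nabla^n y$ is well defined. Lemma~\ref{Lemma 11}, applied with fractional-sum order $n-\alpha>0$, gives $Y(s)={\mathcal N}_a\{y(k)\}=s^{\alpha-n}X(s)$ on $|s-1|<\min\{1,r\}$. Since ${}_a^{\rm R}\nabla_k^\alpha x(k)=\nabla^n y(k)$, the second identity of Lemma~\ref{Lemma 10} yields
\[
{\mathcal N}_a\{\nabla^n y(k)\}=s^nY(s)-\sum\nolimits_{j=0}^{n-1}s^j\big[\nabla^{n-j-1}y(k)\big]_{k=a},
\]
and substituting $Y(s)=s^{\alpha-n}X(s)$ turns the first term into $s^\alpha X(s)$. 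What then remains is only to recognise each bracketed term as an initial value of a Riemann--Liouville difference of $x$.

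For that identification, fix $j\in\{0,\dots,n-1\}$ and put $\beta_j:=\alpha-j-1$. Because $\alpha\in(n-1,n)$ and $\alpha\notin\mathbb{Z}$, for $j\le n-2$ one has $\beta_j\in(n-j-2,\,n-j-1)$ with $n-j-1$ the associated integer, so Definition~\ref{Definition 5} gives ${}_a^{\rm R}\nabla_k^{\beta_j}x(k)=\nabla^{\,n-j-1}\,{}_a\nabla_k^{\beta_j-(n-j-1)}x(k)=\nabla^{\,n-j-1}\,{}_a\nabla_k^{\alpha-n}x(k)=\nabla^{\,n-j-1}y(k)$, exactly the term occurring in the sum; for $j=n-1$ one has $\beta_{n-1}=\alpha-n\in(-1,0)$, and under the convention that a Riemann--Liouville difference of nonpositive order is the corresponding fractional sum, ${}_a^{\rm R}\nabla_k^{\alpha-n}x(k)={}_a\nabla_k^{\alpha-n}x(k)=y(k)=\nabla^0 y(k)$, so the pattern persists. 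Collecting the pieces produces ${\mathcal N}_a\{{}_a^{\rm R}\nabla_k^\alpha x(k)\}=s^\alpha X(s)-\sum\nolimits_{j=0}^{n-1}s^j\big[{}_a^{\rm R}\nabla_k^{\alpha-j-1}x(k)\big]_{k=a}$.

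I expect the delicate points to be bookkeeping rather than ideas. First, Lemma~\ref{Lemma 11} certifies the intermediate identity only on $|s-1|<\min\{1,r\}$, so to reach the stated region $|s-1|<r$ I would invoke analytic continuation together with the uniqueness result Lemma~\ref{Lemma 2}, exactly as is implicitly done for Lemmas~\ref{Lemma 11}--\ref{Lemma 12}. Second, one must fix the convention ${}_a^{\rm R}\nabla_k^{\beta}={}_a\nabla_k^{\beta}$ for $\beta\le0$ together with the zero-extension of $y$ below $a+1$, so that the boundary term at $j=n-1$ (where the ``difference'' degenerates to order zero) is unambiguous; with those conventions in place the rest of the argument is the same routine substitution that yields Lemma~\ref{Lemma 12}.
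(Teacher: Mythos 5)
The paper itself gives no proof of Lemma \ref{Lemma 13}: it is imported verbatim from \cite{Wei:2019CNSNSb}, so your argument can only be measured against the standard derivation. Your route is exactly that derivation: write ${}_a^{\rm R}\nabla_k^\alpha x(k)=\nabla^n y(k)$ with $y(k)={}_a\nabla_k^{\alpha-n}x(k)$, transform the inner fractional sum by Lemma \ref{Lemma 11}, transform the outer integer difference by the second identity of Lemma \ref{Lemma 10}, and re-identify the boundary terms. The identification $\nabla^{n-j-1}y(k)={}_a^{\rm R}\nabla_k^{\alpha-j-1}x(k)$ for $j\le n-2$ is a correct reading of Definition \ref{Definition 5}, and the order-$(\alpha-n)$ convention you adopt for $j=n-1$ is the standard one. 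The region-of-convergence mismatch ($\min\{1,r\}$ versus $r$) that you flag is inherited from the paper's own statements (Lemma \ref{Lemma 12} has the same feature) and is not the issue.

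The one genuine soft spot is your zero-extension of $y$ on $\mathbb{N}_{a+1-n}\setminus\mathbb{N}_{a+1}$. Each boundary term $\big[\nabla^{n-j-1}y(k)\big]_{k=a}$ involves only $y(a),y(a-1),\dots,y(a-(n-j-1))$, and you have declared all of these to be zero; under that convention your own computation collapses to ${\mathcal N}_a\{{}_a^{\rm R}\nabla_k^\alpha x(k)\}=s^\alpha X(s)$, i.e.\ to the Gr\"unwald--Letnikov statement of Theorem \ref{Theorem 14}, and the initial-condition content of Lemma \ref{Lemma 13} becomes vacuous (you prove it only in the trivially initialized case). The hypothesis $x:\mathbb{N}_{a+1-n}\to\mathbb{R}$ is there precisely so that the values of $y$ at $a,a-1,\dots,a+1-n$ --- equivalently the bracketed quantities $\big[{}_a^{\rm R}\nabla_k^{\alpha-j-1}x(k)\big]_{k=a}$, which at $k=a$ lie outside the range $k\in\mathbb{N}_{a+1}$ of Definition \ref{Definition 5} and must be read as prescribed initialization data --- are free and in general nonzero; note also that changing them changes ${}_a^{\rm R}\nabla_k^\alpha x(k)$ at $k=a+1,\dots,a+n$ and hence the left-hand transform, so they cannot be normalized away without loss. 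The fix is cosmetic: delete the zero-extension, keep $y(a),\dots,y(a+1-n)$ as data, and your computation goes through verbatim and yields the lemma in its intended generality.
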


In contrast, the initial conditions regarding to the Reimann--Liouville definition are similar to that in Lemma \ref{Lemma 10}, i.e., $\big[{\nabla^{n - j - 1}}x\left( k \right)\big]_{k=a}$, $j=0,1,\cdots,n-1$. Let us continue to consider the case of $\alpha \in (n-1,n)$ and $n\in\mathbb{Z}_+$. Reference \cite{Wei:2019ISA} deduces the result that ${{\mathcal L}_a}\left\{ {{}_a^{\rm{C}}{\mathscr D}_t^\alpha x\left( t \right)} \right\} = {s^\alpha }X(s)- \sum\nolimits_{j = 0}^{n - 1} {{s^{\alpha  - j - 1}}\big[ {\frac{{{{\rm{d}}^j}}}{{{\rm{d}}{t^j}}}x\left( t \right)} \big]_{t = a}} $ and ${{\mathcal L}_a}\left\{ {{}_a^{\rm{R}}{\mathscr D}_t^\alpha x\left( t \right)} \right\} = {s^\alpha }X(s) - \sum\nolimits_{j = 0}^{n - 1} {{s^j}\big[{}_a^{\rm{R}}{\mathscr D}_t^{\alpha  - j - 1}x\left( t\right)\big]_{t=a}} $. It is clearly indicated that the discrete case and the continuous case share a common form.

\begin{theorem}\label{Theorem 14}
(\textbf{Nabla Gr\"{u}nwald--Letnikov fractional difference})
For $x:\mathbb{N}_{a+1} \to \mathbb{R}$,  $a\in\mathbb{R}$, if $X(s)={{\mathcal N}_a}\left\{ {x(k)} \right\}$ converges for $|s-1| < r$ for some $r > 0$, then for any $\alpha >0$ and $\alpha\notin \mathbb{Z}_+$, one has ${\mathcal N}_a\{ {}_a^{\rm G}{\nabla}_k ^\alpha x\left( k \right)\} =  {s^\alpha}X(s) $ with $|s-1|<r$.
\end{theorem}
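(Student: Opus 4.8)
\begin{proof}
\end{proof}
\noindent\textbf{Proof sketch (plan).}
The plan is to recognize the Gr\"{u}nwald--Letnikov difference as a discrete convolution and then invoke the Convolution lemma (Lemma~\ref{Lemma 6}). \textbf{Step 1: transform of the kernel.} Introduce the kernel sequence $\phi_\alpha:\mathbb{N}_{a+1}\to\mathbb{R}$ by $\phi_\alpha(k)\triangleq(-1)^{k-a-1}\binom{\alpha}{k-a-1}$ and evaluate its $N$-transform straight from Definition~\ref{Definition 11}: since $\phi_\alpha(k+a)=(-1)^{k-1}\binom{\alpha}{k-1}$, the substitution $j=k-1$ gives ${\mathcal N}_a\{\phi_\alpha(k)\}=\sum_{j=0}^{+\infty}\binom{\alpha}{j}(s-1)^j$, which by the generalized binomial theorem equals $s^\alpha$ on the disk $|s-1|<1$. (For $\alpha>0$ the coefficients $\binom{\alpha}{j}$ decay polynomially in $j$, so the series indeed converges throughout $|s-1|<1$; this is the same phenomenon that puts the factor $|s-1|<1$ in the ROCs of Lemma~\ref{Lemma 4} and Lemma~\ref{Lemma 5}.)

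\textbf{Step 2: convolution form of the difference.} Replacing the summation index in Definition~\ref{Definition 6} by $i=k-j$ turns ${}_a^{\rm G}\nabla_k^\alpha x(k)$ into $\sum_{i=a+1}^{k}(-1)^{k-i}\binom{\alpha}{k-i}x(i)$, and since $\phi_\alpha(k-i+a+1)=(-1)^{k-i}\binom{\alpha}{k-i}$ this is exactly $\phi_\alpha(k)\ast x(k)$ in the sense of Lemma~\ref{Lemma 6}. Applying that lemma on the common strip of convergence yields
\begin{equation*}
{\mathcal N}_a\{{}_a^{\rm G}\nabla_k^\alpha x(k)\}={\mathcal N}_a\{\phi_\alpha(k)\}\,X(s)=s^\alpha X(s),
\end{equation*}
which is the assertion.

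I expect the only genuine subtlety to be convergence bookkeeping rather than any single hard algebraic step. Since $s^\alpha$ is represented by its binomial series only on $|s-1|<1$, the identity is strictly valid on $|s-1|<\min\{1,r\}$ and the statement should be read in that light, exactly as for Lemma~\ref{Lemma 11} and Theorem~\ref{Theorem 11}. A reader who prefers a self-contained argument can bypass Lemma~\ref{Lemma 6} and interchange the two summations directly in ${\mathcal N}_a\{{}_a^{\rm G}\nabla_k^\alpha x(k)\}=\sum_{k\ge 1}(1-s)^{k-1}\sum_{j=0}^{k-1}(-1)^j\binom{\alpha}{j}x(k-j+a)$: the change of variable $l=k-j$ factors the double series as $\big(\sum_{j\ge 0}\binom{\alpha}{j}(s-1)^j\big)\big(\sum_{l\ge 1}(1-s)^{l-1}x(l+a)\big)=s^\alpha X(s)$, the rearrangement being justified by absolute convergence on $|s-1|<\min\{1,r\}$, which follows from the geometric-type bound on $x(k+a)$ implied by convergence of $X(s)$ together with the polynomial decay of $\binom{\alpha}{j}$ for $\alpha>0$. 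Finally I would note that the hypothesis $\alpha\notin\mathbb{Z}_+$ is precisely what keeps the defining sum infinite and distinguishes this result from the finite $n$-th difference of Definition~\ref{Definition 1} (for which, as remarked after Definition~\ref{Definition 6}, the memory effect makes the transform genuinely different).
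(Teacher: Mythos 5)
Your proposal is correct and takes essentially the same route as the paper: both identify ${}_a^{\rm G}\nabla_k^\alpha x(k)$ as the convolution of $x$ with the kernel $(-1)^{k-a-1}\binom{\alpha}{k-a-1}=\frac{(k-a)^{\overline{-\alpha-1}}}{\Gamma(-\alpha)}$ and then invoke Lemma~\ref{Lemma 6}, the only cosmetic difference being that you evaluate the kernel's transform directly from the binomial series $\sum_{j\ge 0}\binom{\alpha}{j}(s-1)^j=s^\alpha$, whereas the paper rewrites the kernel via the rising function and cites Lemma~\ref{Lemma 4}. Your caveat that the identity is guaranteed only on $|s-1|<\min\{1,r\}$ matches what the paper's own argument actually delivers, since Lemma~\ref{Lemma 4} is likewise restricted to $|s-1|<1$.
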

\begin{proof}
The Gr\"{u}nwald--Letnikov fractional difference can be rewritten as
\begin{equation}\label{Eq34}
\begin{array}{rl}
{}_a^{\rm{G}}\nabla _k^\alpha x\left( k \right) &= \sum\nolimits_{j = 0}^{k - a - 1} {{{\left( { - 1} \right)}^j}\big(\begin{smallmatrix}
\alpha \\
j
\end{smallmatrix}\big)x\left( {k - j} \right)} \\
 &= \sum\nolimits_{j = 0}^{k - a - 1} {{{\left( { - 1} \right)}^j}\frac{{\Gamma \left( {\alpha  + 1} \right)}}{{\Gamma \left( {j + 1} \right)\Gamma \left( {\alpha  - j + 1} \right)}}x\left( {k - j} \right)} \\
 &= \sum\nolimits_{j = 0}^{k - a - 1} {\frac{{\Gamma \left( {j - \alpha } \right)}}{{\Gamma \left( { - \alpha } \right)\Gamma \left( {j + 1} \right)}}x\left( {k - j} \right)} \\
 &= \sum\nolimits_{j = 0}^{k - a - 1} {\frac{{\left( {j + 1} \right)\overline {^{ - \alpha  - 1}} }}{{\Gamma \left( { - \alpha } \right)}}x\left( {k - j} \right)} \\
 &= \sum\nolimits_{j = a + 1}^k {\frac{{\left( {k - j + 1} \right)\overline {^{ - \alpha  - 1}} }}{{\Gamma \left( { - \alpha } \right)}}x\left( j \right)} \\
 &= \frac{{\left( {k - a} \right)\overline {^{ - \alpha  - 1}} }}{{\Gamma \left( { - \alpha } \right)}} * x\left( k \right).
\end{array}
\end{equation}

With the adoption of Lemma \ref{Lemma 4} and Lemma \ref{Lemma 6}, the desired result can be obtained successfully.
\end{proof}

Since there exist no initial conditions in  Gr\"{u}nwald--Letnikov case, ${}_a^{\rm G}{\nabla}_k ^\alpha x\left( k+1 \right)=\lambda x\left( k \right)$ or ${}_a^{\rm G}{\nabla}_k ^\alpha x\left( k \right)=\lambda x\left( k-1 \right)$ is usually considered instead of ${}_a^{\rm G}{\nabla}_k ^\alpha x\left( k \right)=\lambda x\left( k \right)$. Combining the frequency-domain descriptions of nabla fractional difference in Lemma \ref{Lemma 12}, Lemma \ref{Lemma 13} and Theorem \ref{Theorem 14}, the following corollary can be obtained.
\begin{corollary}\label{Corollary 1}
For any $\alpha\in(n-1,n)$ and $n\in\mathbb{Z}_+$, one can conclude that
\begin{enumerate}[i)]
  \item ${}_a^{\rm{C}}\nabla _k^\alpha x\left( k \right) = v\left( k \right)$ is equivalent to ${}_a^{ }\nabla _k^{ - \alpha }v\left( k \right) = x\left( k \right)+r_1$ with $r_1=\sum\nolimits_{j = 0}^{n - 1} {\frac{{{{\left( {k - a} \right)}^j}}}{{\Gamma \left( {j + 1} \right)}}{{[{\nabla ^j}x\left( k \right)]}_{k = a}}}$;
  \item ${}_a^{\rm{R}}\nabla _k^\alpha x\left( k \right) = v\left( k \right)$ is equivalent to ${}_a^{ }\nabla _k^{ - \alpha }v\left( k \right) = x\left( k \right)+r_2$ with $r_2=\sum\nolimits_{j = 0}^{n - 1} {\frac{{{{\left( {k - a} \right)}^{\alpha  - j - 1}}}}{{\Gamma \left( {\alpha  - j} \right)}}{{[{}_a^{\rm{R}}\nabla _k^{\alpha  - j - 1}x\left( k \right)]}_{k = a}}}
$;
  \item ${}_a^{\rm{G}}\nabla _k^\alpha x\left( k \right) = v\left( k \right)$ is equivalent to ${}_a^{ }\nabla _k^{ - \alpha }v\left( k \right) = x\left( k \right)$.
\end{enumerate}
\end{corollary}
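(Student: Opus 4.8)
The plan is to establish all three equivalences by the same device: apply ${\mathcal N}_a$ to the given difference equation, substitute the transform identity already available for that particular fractional difference, solve the resulting algebraic relation for $X(s)$, recognise the right-hand side as the transform of ${}_a\nabla_k^{-\alpha}v(k)$ corrected (in the Caputo and Riemann--Liouville cases) by a polynomial term, and finally return to the time domain through the uniqueness property. Throughout I would write $X(s)={\mathcal N}_a\{x(k)\}$ and $V(s)={\mathcal N}_a\{v(k)\}$, both assumed convergent on some disc $|s-1|<r$.

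For i), I would first suppose ${}_a^{\rm C}\nabla_k^\alpha x(k)=v(k)$, apply ${\mathcal N}_a$, and use Lemma \ref{Lemma 12} on the left to get $s^\alpha X(s)-\sum_{j=0}^{n-1}s^{\alpha-j-1}[{\nabla^j}x(k)]_{k=a}=V(s)$; dividing by $s^\alpha$ yields $X(s)=s^{-\alpha}V(s)+\sum_{j=0}^{n-1}s^{-j-1}[{\nabla^j}x(k)]_{k=a}$. Lemma \ref{Lemma 11} rewrites $s^{-\alpha}V(s)$ as ${\mathcal N}_a\{{}_a\nabla_k^{-\alpha}v(k)\}$, while Lemma \ref{Lemma 4} with exponent $j$ rewrites each $s^{-j-1}$ as $\Gamma(j+1)^{-1}{\mathcal N}_a\{(k-a)\overline{^{j}}\}$, so by linearity (Theorem \ref{Theorem 1}) the right-hand side is the $N$-transform of ${}_a\nabla_k^{-\alpha}v(k)$ corrected by the nabla Taylor polynomial $r_1$ carrying the initial data; Lemma \ref{Lemma 2} then delivers the claimed time-domain identity. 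Since every step is an identity between series convergent on a common neighbourhood of $s=1$, the chain reverses, giving the converse and hence the equivalence. Part ii) runs identically with Lemma \ref{Lemma 13} in place of Lemma \ref{Lemma 12}: the boundary coefficients are $s^j$, so after dividing by $s^\alpha$ they become $s^{j-\alpha}$, which Lemma \ref{Lemma 4} identifies with $\Gamma(\alpha-j)^{-1}{\mathcal N}_a\{(k-a)\overline{^{\alpha-j-1}}\}$, reproducing $r_2$. Part iii) is the degenerate one: Theorem \ref{Theorem 14} gives ${\mathcal N}_a\{{}_a^{\rm G}\nabla_k^\alpha x(k)\}=s^\alpha X(s)$ with no boundary terms, so ${}_a^{\rm G}\nabla_k^\alpha x(k)=v(k)$ transforms to $s^\alpha X(s)=V(s)$, i.e. $X(s)={\mathcal N}_a\{{}_a\nabla_k^{-\alpha}v(k)\}$ by Lemma \ref{Lemma 11}, and uniqueness closes it in both directions.

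The step I expect to demand the most care is the bookkeeping that matches the frequency-domain boundary sums of Lemmas \ref{Lemma 12} and \ref{Lemma 13} to the stated correction terms $r_1$ and $r_2$: one has to pin down the Gamma-function normalisations, keep the overall sign straight, and check that the monomials delivered by Lemma \ref{Lemma 4} are precisely the nabla Taylor monomials occurring in $r_1$ and $r_2$. A secondary technical point is the region of convergence --- Lemma \ref{Lemma 11} gives its identity only on $|s-1|<\min\{1,r\}$ and Lemma \ref{Lemma 4} only on $|s-1|<1$ --- so the equality of transforms holds on the intersection of these discs, which is still a neighbourhood of $s=1$ and hence sufficient to invoke Lemma \ref{Lemma 2}. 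An alternative, transform-free route would compose the operators directly, using ${}_a\nabla_k^{-\alpha}{}_a^{\rm C}\nabla_k^\alpha x={}_a\nabla_k^{-n}\nabla^n x$ together with a discrete Taylor expansion, but that rests on the semigroup law for nabla sums, which the excerpt does not record; the transform argument above sidesteps it.
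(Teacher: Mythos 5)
Your route is the same one the paper intends: it offers no written proof beyond ``combining'' Lemmas \ref{Lemma 12}, \ref{Lemma 13} and Theorem \ref{Theorem 14}, and your chain (transform the equation, divide by $s^\alpha$, invoke Lemma \ref{Lemma 11} for the fractional sum, Lemma \ref{Lemma 4} for the boundary monomials, Theorem \ref{Theorem 1} for linearity, Lemma \ref{Lemma 2} for uniqueness, with the ROCs intersected near $s=1$) is exactly the right skeleton, and part iii) is complete as you state it.

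However, the one step you explicitly defer --- matching the frequency-domain boundary sums to $r_1$ and $r_2$ --- is precisely where the computation and the literal statement part ways, and asserting that Lemma \ref{Lemma 2} ``delivers the claimed time-domain identity'' is not yet earned. Carrying your own algebra through in the Caputo case gives $s^{-\alpha}V(s)=X(s)-\sum_{j=0}^{n-1}s^{-j-1}[\nabla^j x(k)]_{k=a}$, and since Lemma \ref{Lemma 4} yields $s^{-j-1}=\Gamma(j+1)^{-1}\,{\mathcal N}_a\{(k-a)^{\overline{j}}\}$, uniqueness produces ${}_a\nabla_k^{-\alpha}v(k)=x(k)-\sum_{j=0}^{n-1}\frac{(k-a)^{\overline{j}}}{\Gamma(j+1)}[\nabla^j x(k)]_{k=a}$, and analogously with $\frac{(k-a)^{\overline{\alpha-j-1}}}{\Gamma(\alpha-j)}[{}_a^{\rm R}\nabla_k^{\alpha-j-1}x(k)]_{k=a}$ in the Riemann--Liouville case. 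That is, the correction appears with the opposite sign to the stated ``$x(k)+r_i$'', and with rising powers $(k-a)^{\overline{j}}$, $(k-a)^{\overline{\alpha-j-1}}$ rather than the ordinary powers $(k-a)^{j}$, $(k-a)^{\alpha-j-1}$ written in $r_1$, $r_2$ (these coincide only for exponents $0$ and $1$). A complete proof must either record the identity in the form $x(k)={}_a\nabla_k^{-\alpha}v(k)+r_i$ with $r_i$ understood as the rising-power nabla Taylor terms, or explicitly reconcile the sign and the monomials with the corollary as printed; since the paper states the corollary without proof, this is arguably a defect of the statement itself, but your write-up should not paper over it by asserting the match.
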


\begin{lemma}\label{Lemma 14}
(\textbf{Stable region} \cite{Wei:2019CNSNSc})
Assume $x: \mathbb{N}_{a+1}\to \mathbb{C}$, $a\in\mathbb{R}$ and $X(s)={\mathcal N}_a\{x(k)\}$. $x(k)$ is convergent with respect to $k$, i.e., $\mathop {\lim }\limits_{k \to  + \infty } x\left( k \right) = 0$, if and only if all the principal poles of $X(s)$ satisfy $|s-1|>1$.
\end{lemma}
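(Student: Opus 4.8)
The plan is to recover the large-$k$ behaviour of $x(k)$ from the singularity structure of $X(s)$ through the inverse $N$-transform of Lemma~\ref{Lemma 7}. Writing $w=1-s$, the transform $X$, regarded as a function of $w$, is the series $\sum\nolimits_{j\ge 1}x(j+a)\,w^{j-1}$, and $|s-1|=|w|$; thus the critical circle $|s-1|=1$ is the unit circle $|w|=1$, and the hypothesis ``all principal poles satisfy $|s-1|>1$'' says every pole of $X$ on its principal sheet lies strictly outside $|w|=1$, while branch points are not counted. Lemma~\ref{Lemma 7} gives
\begin{equation}\label{EqLem14}
{\textstyle x(k)=\frac{1}{2\pi{\rm i}}\oint_{c}X(s){(1-s)^{a-k}}\,{\rm d}s,\qquad k\in\mathbb{N}_{a+1},}
\end{equation}
with $c$ a small clockwise loop around $s=1$ (that is, around $w=0$) inside the region of convergence. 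Since $|(1-s)^{a-k}|=|w|^{a-k}$ on $|w|=\rho$, the whole argument reduces to deforming $c$ outward and tracking the residues crossed.

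For sufficiency, suppose every principal pole $w_0=1-s_0$ of $X$ has $|w_0|>1$ and (for the function classes that occur here --- rational functions of $s^{1/q}$, or the discrete Mittag--Leffler transforms of Lemma~\ref{Lemma 5}) that the only branch point is $s=0$, i.e.\ $w=1$. Fix $\rho$ with $1<\rho<\min|w_0|$ and deform $c$ in (\ref{EqLem14}) out to a contour contained in $|w|\le\rho$, at worst detouring around the cut issuing from $s=0$; no pole is crossed, and the arcs added near $w=\infty$ contribute nothing because $X$ is analytic there and $|w|^{a-k}\to0$ for large $k$. The circular part of the deformed contour is then bounded by $C|w|^{a-k}=C\rho^{a-k}\to0$ since $\rho>1$, and the part wrapped around the cut at $s=0$ is, by a Hankel-type estimate, of the order of a negative power of $k$ and hence also $o(1)$; therefore $x(k)\to0$.

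For necessity, argue by contraposition. Suppose $X$ has a principal pole $s_0$ with $|s_0-1|\le1$, i.e.\ $w_0=1-s_0$ with $0<|w_0|\le1$, and pick such a pole of least modulus. Deforming $c$ in (\ref{EqLem14}) just past the circle $|w|=|w_0|$ extracts, by the residue theorem, a contribution to $x(k)$ of the form $p(k)(1-s_0)^{a-k}$, with $p$ a nonzero polynomial of degree one less than the order of the pole, plus a remainder integral over $|w|=|w_0|+\varepsilon$ (with a cut detour at $w=1$ if needed). If $|w_0|<1$, that contribution is unbounded and dominates everything farther out; if $|w_0|=1$, it has modulus bounded below by a positive constant, or grows for a multiple pole; while the remainder is $o(1)$ because $|w_0|+\varepsilon>1$ and, by the sufficiency estimate, the cut detour decays. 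Hence $x(k)\not\to0$. The only delicate point is that several poles may lie on the minimal circle and their residue terms might cancel: this is impossible, since their sum is a nonzero exponential--polynomial (almost periodic when $|w_0|=1$) sequence and such a sequence tends to $0$ only if all its coefficients vanish, contradicting that the $w_0$ are poles.

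The main obstacle is the non-rational case, namely justifying the contour deformation when $X$ has a branch point at $s=0$ --- which lies exactly on the critical circle $|w|=1$ --- and when $X$ may have infinitely many poles. One must show the branch-cut integral decays like a negative power of $k$ rather than merely staying bounded, so that a branch point at $s=0$ is compatible with $x(k)\to0$ and is correctly excluded from the list of obstructing singularities, and that the series of residues converges and can be summed termwise. For the transfer functions produced by nabla fractional-order systems this is handled by the standard Hankel-contour estimate about $s=0$ together with the fact that only finitely many poles lie on or inside any fixed circle --- these being precisely the ``principal poles'' of the statement. Equivalently, one could first perform a partial-fraction decomposition of $X(s)$ and appeal to the known asymptotics of the discrete Mittag--Leffler functions of Lemma~\ref{Lemma 5}; the analytic substance is the same.
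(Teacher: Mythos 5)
The paper itself offers no proof of Lemma~\ref{Lemma 14}: it is quoted verbatim from \cite{Wei:2019CNSNSc}, so there is no internal argument to compare yours against; I can only judge the proposal on its own terms. Your strategy --- inverting via Lemma~\ref{Lemma 7}, setting $w=1-s$ so that the critical circle $|s-1|=1$ becomes $|w|=1$, deforming the contour outward, extracting residues at the principal poles, estimating the branch-cut (Hankel) contribution at $s=0$, and ruling out cancellation of residues on the minimal circle --- is the standard route for results of this Matignon type, and is in the same spirit as the source's analysis through discrete Mittag--Leffler asymptotics (Lemma~\ref{Lemma 5}).

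However, there is a genuine gap, and it sits exactly at the step you yourself label ``the main obstacle.'' The claim that the branch-cut integral near $s=0$ decays like a negative power of $k$ is asserted, not proved, and it is not a removable technicality: read literally, for arbitrary $x:\mathbb{N}_{a+1}\to\mathbb{C}$ the lemma is false without a restriction on the class of $X$, so the restriction you slip in parenthetically (``for the function classes that occur here'') is load-bearing and must be made precise before the ``if'' direction is established. Concretely, take $X(s)=s^{-\alpha}$ with $\alpha\in(1,2)$: its only principal-sheet singularity is the branch point at $s=0$, so the condition ``all principal poles satisfy $|s-1|>1$'' holds vacuously, yet Lemma~\ref{Lemma 4} gives $x(k)=\frac{(k-a)^{\overline{\alpha-1}}}{\Gamma(\alpha)}\to+\infty$. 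Thus sufficiency requires a quantitative hypothesis on $X$ near $s=0$ (e.g.\ $X$ rational in $s^{1/q}$ with singular behaviour at $s=0$ strictly weaker than $|s|^{-1}$, which is what the transfer functions $\frac{s^{\alpha-\beta}}{s^\alpha-\lambda}$ of Lemma~\ref{Lemma 5} satisfy), and the Hankel estimate must actually be carried out for that class rather than cited; as written you have proved, at best, a narrower statement than the lemma with its hardest estimate outsourced. Two further points need tightening: the outward deformation requires $\inf|w_0|>1$, i.e.\ finitely many principal poles (or at least no accumulation on $|s-1|=1$ from outside), and the termwise summation over infinitely many residues is only asserted. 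The necessity half is essentially sound --- indeed for a pole with $|1-s_0|<1$ it follows even more cheaply, since the radius of convergence of $\sum_{j\ge1}x(j+a)w^{j-1}$ is then less than $1$, forcing $x(k)$ to be unbounded --- and your almost-periodicity argument against cancellation of residues on the circle $|w_0|=1$ is the right tool there.
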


Note that the region of convergence in Definition \ref{Definition 12} is a different concept from the stable region in Lemma \ref{Lemma 14}. The former focuses on the condition when the summation of an infinite series ${\sum\nolimits_{k = 1}^{ + \infty } {{{\left( {1 - s} \right)}^{k - 1}}x\left( {k + a} \right)} } $ is convergent. The latter concerns the poles' location of $X(s)$ which could guarantee the convergence of $x(k)$ in the time domain.

\begin{remark}\label{Remark 1}
This paper makes a short survey on the promising properties of $N$-transform. All the existing results are presented as lemma. All the newly proposed results are given in the form of theorem. Notably, among them 20 properties are proposed by the authors. Besides these elaborated properties, some interesting properties are left for the interested readers, such as the properties on time expansion, decimation, time reversal, cross-correlation, scaling in the time domain, and rotating in the frequency domain, etc \cite{Zwiki}.
\end{remark}

\section{APPLICATIONS}\label{Section 4}
In this section, four examples are provided to demonstrate the applications of $N$-transform.
\begin{example}\label{Examle 1}
Consider the discrete time unit step function $u\left( k \right) = \left\{ \begin{array}{l}
1,k \ge 0\\
0,k < 0
\end{array} \right.$. Then, for any $a\in \mathbb{R}$, one has
\begin{equation}\label{Eq35}
\begin{array}{rl}
{{\mathcal N}_a}\left\{ {u\left( {k - a - 1} \right)} \right\} &= \sum\nolimits_{k = 1}^{ + \infty } {{{\left( {1 - s} \right)}^{k - 1}}u\left( {k - 1} \right)} \\
 &= \sum\nolimits_{k = 1}^{ + \infty } {{{\left( {1 - s} \right)}^{k - 1}}} \\
 &= \frac{1}{s},
\end{array}
\end{equation}
for $|s-1|<1$. Since $u\left( {k - a - 1} \right) = {\left( {k - a} \right)^{\overline 0 }}$, with the help of Lemma \ref{Lemma 4}, the result in (\ref{Eq35}) can also be obtained.
\end{example}
\begin{example}\label{Examle 2}
Consider the discrete time unit impulse function $\delta \left( k \right) = \left\{ \begin{array}{l}
1,k = 0\\
0,k \ne 0
\end{array} \right.$. Then, for any $a\in \mathbb{R}$, one has
\begin{equation}\label{Eq36}
\begin{array}{rl}
{{\mathcal N}_a}\left\{ {\delta \left( {k - a - 1} \right)} \right\} &= \sum\nolimits_{k = 1}^{ + \infty } {{{\left( {1 - s} \right)}^{k - 1}}\delta \left( {k - 1} \right)} \\
 &= {{{{\left( {1 - s} \right)}^{k - 1}}} \big|_{k = 1}}\\
 &= 1,
\end{array}
\end{equation}
for $s\neq 1$. Since $\delta \left( {k - a - 1} \right) = \nabla u\left( {k - a - 1} \right)$, with the help of Lemma \ref{Lemma 10}, the result in (\ref{Eq36}) can also be obtained.
\end{example}

\begin{example}\label{Examle 3}
By applying the formula $\frac{1}{{{s^\alpha }}} = \int_0^{ + \infty } {\frac{{{\mu _\alpha }\left( \omega  \right)}}{{s + \omega }}{\rm{d}}\omega }$ and Theorem \ref{Theorem 5}, the system $\frac{1}{{{{\left( {\tau s + 1} \right)}^\alpha }}}$ with $\alpha\in(0,1)$ and $\tau\in(-0.5,0)\cup(0,+\infty)$ can be realized by the
linear time varying system
  \begin{equation}\label{Eq39}
\left\{ \begin{array}{l}
{\nabla ^1}\varsigma \left( {\omega ,k} \right) =  - \omega \varsigma\left( {\omega ,k} \right) + {( {1 + {\lambda ^{ - 1}}} )^{a + 1 - k}}v\left( k \right),\\
x\left( k \right) = \int_0^{ + \infty } {{\mu _\alpha }\left( \omega  \right)\kappa {{( {1 + {\tau ^{ - 1}}} )}^{a + 1 - k}}\varsigma\left( {\omega ,k} \right){\rm{d}}\omega },
\end{array} \right.
  \end{equation}
  or the linear time invariant system
  \begin{equation}\label{Eq38}
   \left\{ {\begin{array}{l}
   \tau {\nabla ^1}z\left( \omega,k \right) =  - (\omega+1) z\left( \omega,k \right) + v\left( k \right),\\
   x\left( k \right) = \int_0^{+\infty}  {{\mu _\alpha }\left(\omega\right)z\left( \omega,k \right){\rm d}\omega},
   \end{array}} \right.
  \end{equation}
  with $z\left( \omega,a \right)=0$, $\varsigma\left( \omega,a \right)=0$, ${\mu _\alpha }\left( \omega \right) = \frac{{\sin \left(\alpha \pi \right)}}{{{\omega^\alpha }\pi }}$, $\kappa  = {\left( {\tau  + 1} \right)^{ - \alpha }}$ and $\lambda  =  - 1 - \tau $.
\end{example}

\begin{example}\label{Examle 4}
Consider the system ${}_a^{\rm C}\nabla _k^\alpha x\left( k \right) = \lambda x\left( k \right)$ with $\alpha=0.5$, $x(a)=1$ and $a=3$. With the help of $N$-transform, the frequency domain form of $x(k)$ satisfies $X\left( s \right) = \frac{1}{{{s^\alpha } - \lambda }}x\left( a \right)$. When we select $\lambda=(1+\rho{\rm e}^{{\rm i}\theta})^\alpha$, from Lemma \ref{Lemma 14}, it can be concluded that if $\rho<1$, the system is unstable and vice versa. Because $x(k)$ might be a complex number as $\lambda$ is a complex number, thus we plot the real part of $x(k)$ for different $\rho$ and $\theta$ in FIGURE \ref{Fig1} - FIGURE \ref{Fig3}.
\begin{figure}[!htbp]
\centering
\includegraphics[width=0.9\columnwidth]{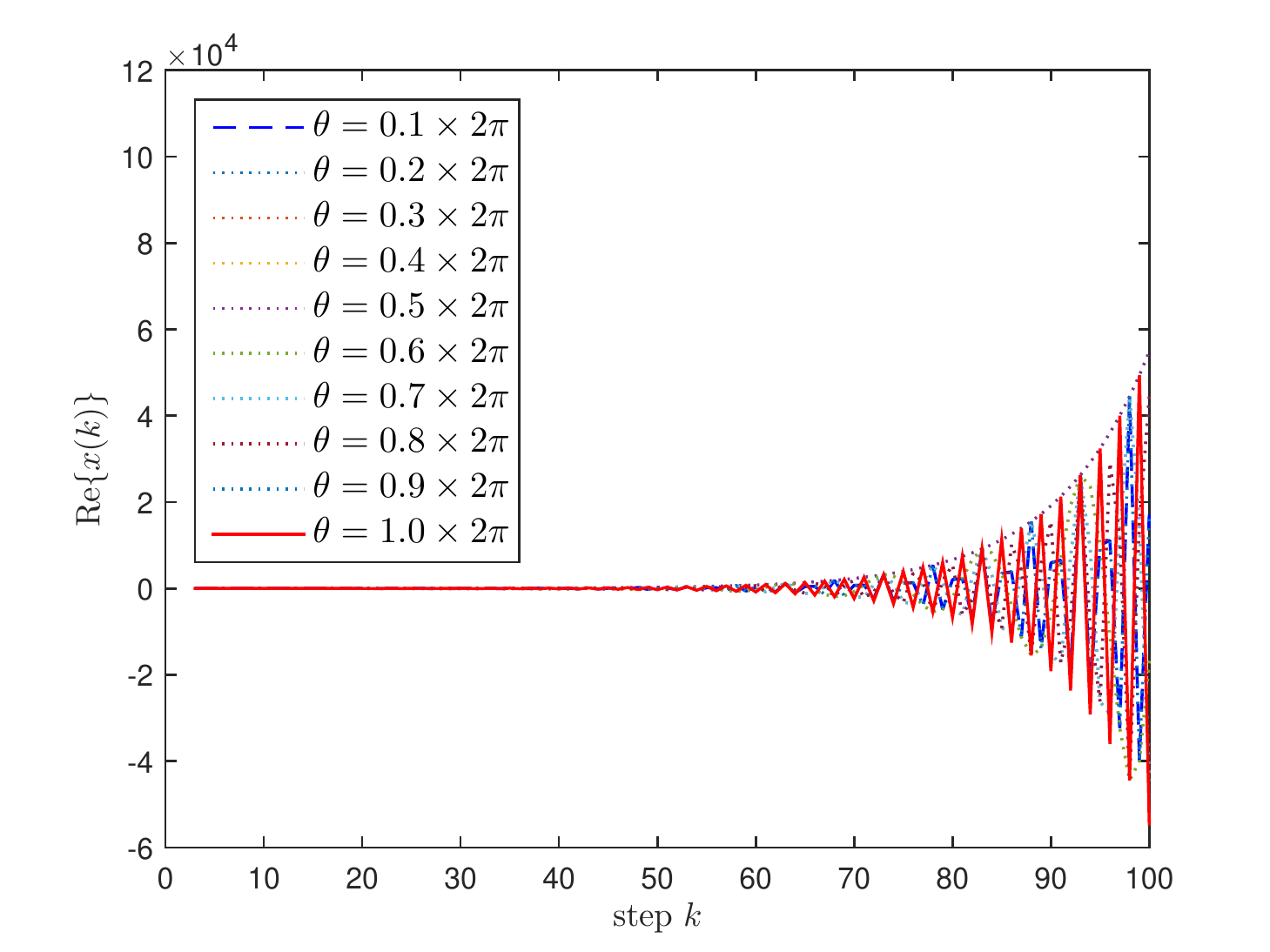}
\caption{The real part of $x(k)$ with $\rho=0.9$ and different $\theta$.}\label{Fig1}
\end{figure}
\begin{figure}[!htbp]
\centering
\includegraphics[width=0.9\columnwidth]{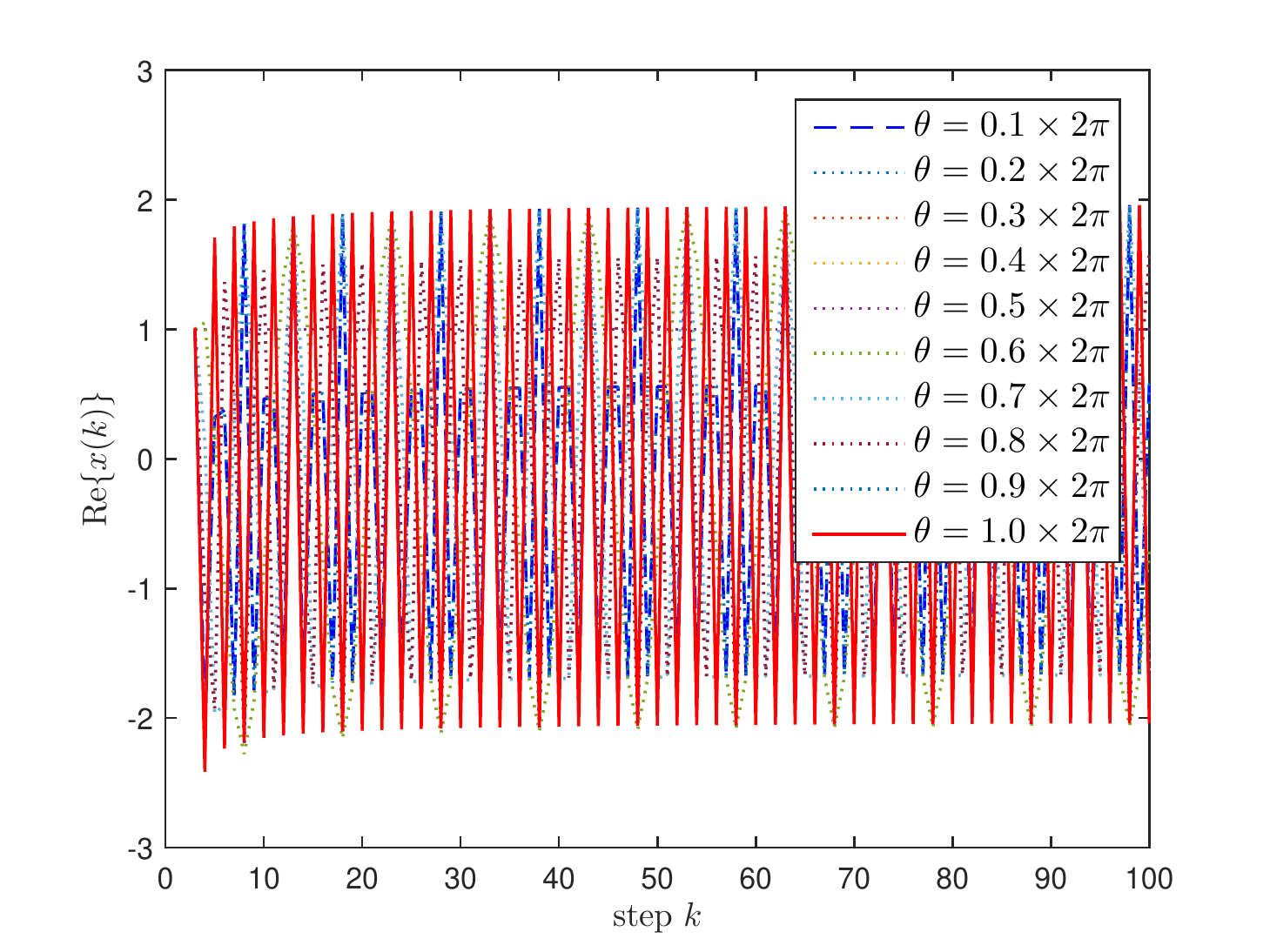}
\caption{The real part of $x(k)$ with $\rho=1.0$ and different $\theta$.}\label{Fig2}
\end{figure}
\begin{figure}[!htbp]
\centering
\includegraphics[width=0.9\columnwidth]{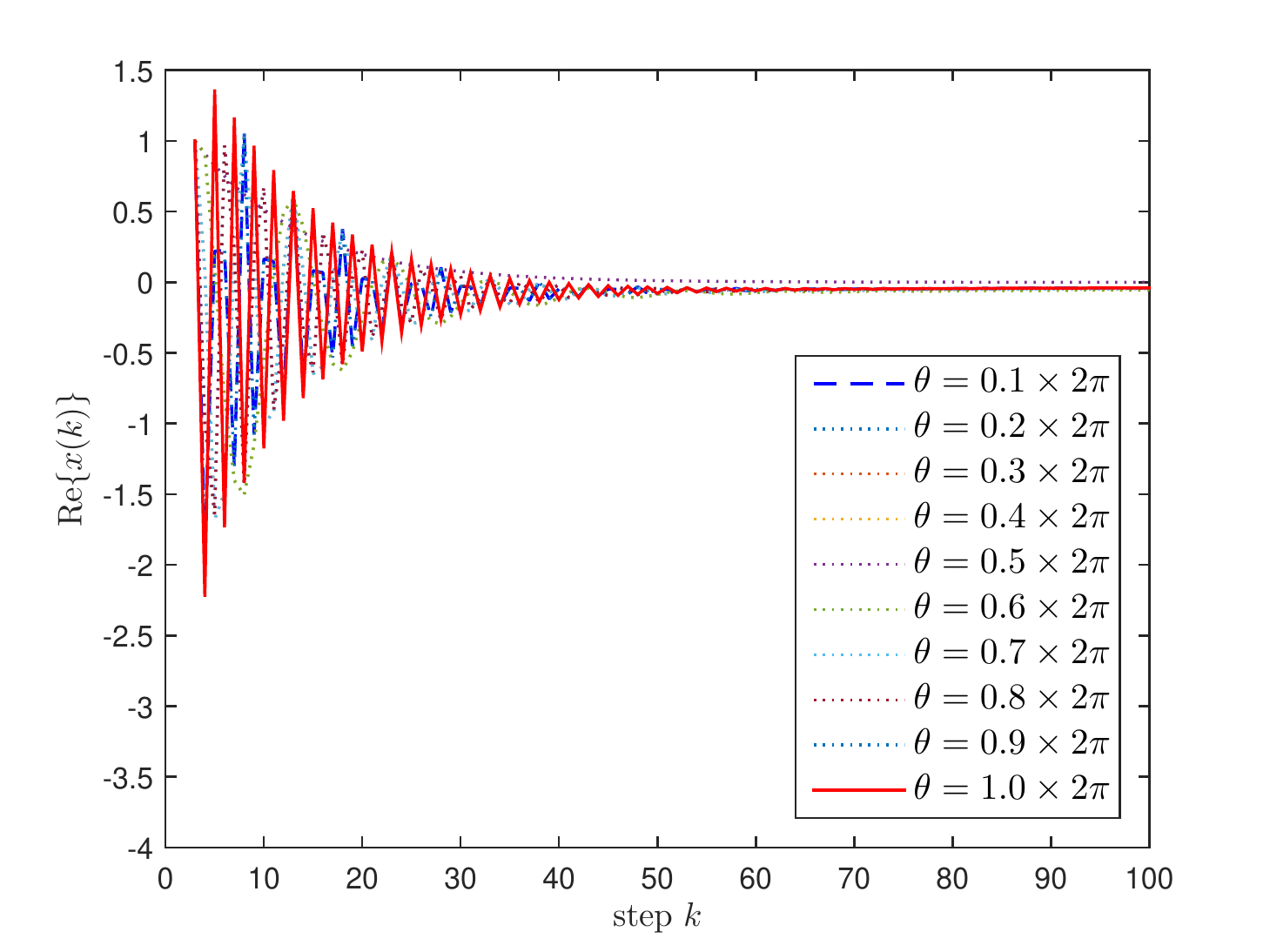}
\caption{The real part of $x(k)$ with $\rho=1.1$ and different $\theta$.}\label{Fig3}
\end{figure}

From the obtained figures, it can be observed that the illustrated results confirm our theoretical result firmly.
\end{example}

\section{CONCLUSIONS}\label{Section 5}
In this paper, some potential properties of the sampling free $N$-transform have been reviewed and explored. After providing the related definitions in order, 14 lemmas were summarized and 14 theorems were presented to exhibit its special characteristics. Additionally, some related problems have been investigated based on the elaborated properties, including calculating $N$-transform of some function and deriving the equivalent model of the nabla discrete fractional order system. It is expected that this work could arouse our interest on the nabla discrete fractional order system.

\section*{ACKNOWLEDGEMENTS}
The work described in this paper was supported by the National Natural Science Foundation of China (61601431, 61573332), the Anhui Provincial Natural Science Foundation (1708085QF141) and the funds of China Scholarship Council (201706340089, 201806345002). This draft was updated after it was accepted by FDTA 2019.

\bibliographystyle{asmems4}
\bibliography{database}

\begin{thebibliography}{10}

\bibitem{Monje:2010Book}
Monje, C.~A., Chen, Y.~Q., Vinagre, B.~M., Xue, D.~Y., and Feliu-Batlle, V.,
  2010.
\newblock {\em Fractional-Order Systems and Controls: Fundamentals and
  Applications}.
\newblock Springer, London.

\bibitem{Ma:2016JCND}
Ma, L., and Li, C.~P., 2016.
\newblock ``Center manifold of fractional dynamical system''.
\newblock {\em Journal of Computational and Nonlinear Dynamics, \textbf{
  11}}(2).
\newblock id. 021010.

\bibitem{Ma:2018JCND}
Ma, L., and Li, C.~P., 2018.
\newblock ``On finite part integrals and {Hadamard}-type fractional
  derivatives''.
\newblock {\em Journal of Computational and Nonlinear Dynamics, \textbf{
  13}}(9).
\newblock id. 090905.

\bibitem{Sun:2018CNSNS}
Sun, H.~G., Zhang, Y., Baleanu, D., Chen, W., and Chen, Y.~Q., 2018.
\newblock ``A new collection of real world applications of fractional calculus
  in science and engineering''.
\newblock {\em Communications in Nonlinear Science and Numerical Simulation,
  \textbf{ 64}}, pp.~213--231.

\bibitem{Ma:2019F}
Ma, L., 2019.
\newblock ``Comparison theorems for {Caputo-Hadamard} fractional differential
  equations''.
\newblock {\em Fractals, \textbf{ 27}}(2).
\newblock id. 1950036.

\bibitem{Ortigueira:2015SP}
Ortigueira, M.~D., Coito, F. J.~V., and Trujillo, J.~J., 2015.
\newblock ``Discrete-time differential systems''.
\newblock {\em Signal Processing, \textbf{ 107}}, pp.~198--217.

\bibitem{Ortigueira:2016CNSNS}
Ortigueira, M.~D., Torres, D. F.~M., and Trujillo, J.~J., 2016.
\newblock ``Exponentials and {Laplace} transforms on nonuniform time scales''.
\newblock {\em Communications in Nonlinear Science and Numerical Simulation,
  \textbf{ 39}}, pp.~252--270.

\bibitem{Wei:2017FCAA}
Wei, Y.~H., Chen, Y.~Q., Cheng, S.~S., and Wang, Y., 2017.
\newblock ``A note on short memory principle of fractional calculus''.
\newblock {\em Fractional Calculus and Applied Analysis, \textbf{ 20}}(6),
  pp.~1382--1404.

\bibitem{Wei:2019CNSNSa}
Wei, Y.~H., Gao, Q., Liu, D.~Y., and Wang, Y., 2019.
\newblock ``On the series representation of nabla discrete fractional
  calculus''.
\newblock {\em Communications in Nonlinear Science and Numerical Simulation,
  \textbf{ 69}}, pp.~198--218.

\bibitem{Cheng:2017ISA}
Cheng, S.~S., Wei, Y.~H., Chen, Y.~Q., Liang, S., and Wang, Y., 2017.
\newblock ``A universal modified {LMS} algorithm with iteration order hybrid
  switching''.
\newblock {\em ISA Transactions, \textbf{ 67}}, pp.~67--75.

\bibitem{Liu:2019SP}
Liu, T.~Y., Cheng, S.~S., Wei, Y.~H., Li, A., and Wang, Y., 2019.
\newblock ``Fractional central difference {Kalman} filter with unknown prior
  information''.
\newblock {\em Signal Processing, \textbf{ 154}}, pp.~294--303.

\bibitem{Cui:2018ISA}
Cui, R.~Z., Wei, Y.~H., Cheng, S.~S., and Wang, Y., 2018.
\newblock ``An innovative parameter estimation for fractional order systems
  with impulse noise''.
\newblock {\em ISA Transactions, \textbf{ 82}}, pp.~120--129.

\bibitem{Cheng:2018SP}
Cheng, S.~S., Wei, Y.~H., Sheng, D., Chen, Y.~Q., and Wang, Y., 2018.
\newblock ``Identification for {Hammerstein} nonlinear {ARMAX} systems based on
  multi-innovation fractional order stochastic gradient''.
\newblock {\em Signal Processing, \textbf{ 142}}, pp.~1--10.

\bibitem{Wei:2018ISA}
Wei, Y.~H., Chen, Y.~Q., Liu, T.~Y., and Wang, Y., 2019.
\newblock ``Lyapunov functions for nabla discrete fractional order systems''.
\newblock {\em ISA Transactions, \textbf{ 88}}, pp.~82--90.

\bibitem{Baleanu:2017CNSNS}
Baleanu, D., Wu, G.~C., Bai, Y.~R., and Chen, F.~L., 2017.
\newblock ``Stability analysis of {Caputo}--like discrete fractional systems''.
\newblock {\em Communications in Nonlinear Science and Numerical Simulation,
  \textbf{ 48}}, pp.~520--530.

\bibitem{Diaz:1974MC}
D{\'\i}az, J.~B., and Osler, T., 1974.
\newblock ``Differences of fractional order''.
\newblock {\em Mathematics of Computation, \textbf{ 28}}(125), pp.~185--202.

\bibitem{Gray:1988MC}
Gray, H.~L., and Zhang, N.~F., 1988.
\newblock ``On a new definition of the fractional difference''.
\newblock {\em Mathematics of Computation, \textbf{ 50}}(182), pp.~513--529.

\bibitem{Atici:2009EJQTDE}
At{\i}c{\i}, F.~M., and Eloe, P.~W., 2009.
\newblock ``Discrete fractional calculus with the nabla operator''.
\newblock {\em Electronic Journal of Qualitative Theory of Differential
  Equations, \textbf{ 2009}}(3), pp.~1--12.

\bibitem{Goodrich:2015Book}
Goodrich, C., and Peterson, A.~C., 2015.
\newblock {\em Discrete Fractional Calculus}.
\newblock Springer, Cham.

\bibitem{Mohan:2014CMS}
Mohan, J.~J., and Deekshitulu, G., 2014.
\newblock ``Solutions of nabla fractional difference equations using
  {$N$}-transforms''.
\newblock {\em Communications in Mathematics and Statistics, \textbf{ 2}}(1),
  pp.~1--16.

\bibitem{Wei:2019AJC}
Wei, Y.~H., Wang, J.~C., Tse, P.~W., and Wang, Y., 2019.
\newblock ``Modelling and simulation of nabla fractional order systems using
  vector fitting method''.
\newblock {\em arXiv preprint}.
\newblock arXiv: 1901.09212.

\bibitem{Wei:2019CNSNSb}
Wei, Y.~H., Chen, Y.~Q., Wang, J.~C., and Wang, Y., 2019.
\newblock ``Analysis and description of the infinite-dimensional nature for
  nabla discrete fractional order systems''.
\newblock {\em Communications in Nonlinear Science and Numerical Simulation,
  \textbf{ 72}}, pp.~472--492.

\bibitem{Wei:2019CNSNSc}
Wei, Y.~H., Gao, Q., Cheng, S.~S., and Wang, Y., 2018.
\newblock ``Time-domain response and its convergence behaviour of nabla
  discrete fractional order systems''.
\newblock {\em arXiv preprint}.
\newblock arXiv:1812.11370.

\bibitem{Lwiki}
Wikipedia.
\newblock Laplace transform.
\newblock \url{https://en.wikipedia.org/wiki/Laplace_transform}.
\newblock Accessed Feburary 10, 2019.

\bibitem{Zwiki}
Wikipedia.
\newblock Z-transform.
\newblock \url{https://en.wikipedia.org/wiki/Z-transform}.
\newblock Accessed Feburary 10, 2019.

\bibitem{Ostalczyk:2015Book}
Ostalczyk, P., 2015.
\newblock {\em Discrete Fractional Calculus: Applications in Control and Image
  Processing}.
\newblock World Scientific, Berlin.

\bibitem{Cheng:2011Book}
Cheng, J.~F., 2011.
\newblock {\em Fractional Difference Equations}.
\newblock Xiamen University Press, Xiamen.

\bibitem{Wei:2019ISA}
Wei, Y.~H., Yin, W.~D., Chen, Y.~Q., and Wang, Y., 2018.
\newblock ``Description and realization for a class of irrational transfer
  functions''.
\newblock {\em arXiv preprint}.
\newblock arXiv: 1812.11368.

\bibitem{Wei:2019JCND}
Wei, Y.~H., Yin, W.~D., Zhao, Y.~T., and Wang, Y., 2019.
\newblock ``A new insight into the {Gr\"{u}nwald--Letnikov} discrete fractional
  calculus''.
\newblock {\em Journal of Computational and Nonlinear Dynamics, \textbf{
  14}}(4).
\newblock id. 041008.

\bibitem{Ahrendt:2012CAA}
Ahrendt, K., Castle, L., Holm, M., and Yochman, K., 2012.
\newblock ``Laplace transforms for the nabla-difference operator and a
  fractional variation of parameters formula''.
\newblock {\em Communications in Applied Analysis, \textbf{ 16}}(3), p.~317.

\end{thebibliography}

\end{document}